\documentclass[11pt]{article} 

\usepackage[utf8]{inputenc} 
\usepackage[english]{babel} 
\usepackage{amsmath}
\usepackage{amssymb}
\usepackage{txfonts}
\usepackage{mathdots}
\usepackage[classicReIm]{kpfonts}
\usepackage{graphicx}
\usepackage{enumitem}
\usepackage{xcolor}
\usepackage{pifont}
\usepackage[colorlinks=true,linkcolor=blue]{hyperref}
\usepackage[nameinlink]{cleveref}

\usepackage{url}
\usepackage{cite}
\usepackage[frak=esstix]{mathalpha}

\usepackage{stmaryrd} 

\usepackage{fancyhdr}

\usepackage{mathtools}
\usepackage[left=3cm,right=3cm,top=3cm,bottom=3cm]{geometry}

\newcounter{propertyCounter}
\newtheorem{property}{Property}[propertyCounter]

\newcounter{theoremCounter}
\newtheorem{theorem}{Theorem}[theoremCounter]

\newcounter{corollaryCounter}
\newtheorem{corollary}{Corollary}[corollaryCounter]

\newcounter{propositionCounter}
\newtheorem{proposition}{Proposition}[propositionCounter]

\newcounter{lemmaCounter}
\newtheorem{lemma}{Lemma}[lemmaCounter]

\newcounter{definitionCounter}
\newtheorem{definition}{Definition}[definitionCounter]

\newcounter{noteCounter}
\newtheorem{note}{Note}[noteCounter]

\newcounter{remarkCounter}
\newtheorem{remark}{Remark}[remarkCounter]

\newcounter{notationCounter}
\newtheorem{notation}{Notation}[notationCounter]

\newcounter{conjectureCounter}
\newtheorem{conjecture}{Conjecture}[conjectureCounter]

\newcounter{exampleCounter}
\newtheorem{example}{Example}[exampleCounter]

\usepackage{lastpage}
\title{Theorems Related to Fermat's Sums of Two Squares}
\author{WOLF Fran\c{c}ois, WOLF Marc.\\}
\date{\today}

\begin{document}

\maketitle

\noindent 
\noindent \textbf{WOLF Fran\c{c}ois,  }\url{https://orcid.org/0000-0002-3330-6087}\underbar{} \\
\noindent Email: francois.wolf@dbmail.com\textbf{} \\
\noindent \textbf{WOLF Marc, }\url{https://orcid.org/0000-0002-6518-9882} \underbar{} \\
\noindent Email: marc.wolf3@wanadoo.fr\underbar{}

\begin{abstract}
\noindent In this paper, we study the factorization of sums of two squares $X^2+Y^2$. We show the existence of linear and quadratic progressions on $X$ that generate both factorizations and alternative decompositions into sums of two squares. We are therefore naturally interested in the number of decompositions into sums of two squares of an integer $n$ and relate it to its number of divisors $d\left(n\right)$. Finally, we present an algorithm to obtain these decompositions using the sieve introduced in \cite{a20}.
\end{abstract}

\noindent \textbf{\underbar{Keywords}}: Euler factorization, Fermat composite numbers, quadratic progression, Diophantus’ identity, theorems related to Fermat's sum of two squares, decomposition into a sum of two squares, sieve, prime numbers.\eject 

\noindent 


\noindent

\section{Introduction}

\noindent The question of sums of two squares plays a central role in the history of number theory: ranging from Fermat's observations on the representations of prime numbers $p\mathrm{\equiv }\mathrm{1\ }\left[\mathrm{4}\right]$, through Euler's identities and the structure of Gaussian integers, to modern analytical work on the distribution of these representations (Landau and his successors), this object links algebraic,  arithmetic and analytical tools.
\\[10pt]
\noindent We mention a few references, including classical literature and modern syntheses:

\begin{enumerate}[label=-]
\item   The algebraic approaches (Gaussian integers) studied by C. F. Gauss \cite{b1}, D. A. Cox \cite{b2}, H. Davenport \cite{b3}, G. H. Hardy \& E. M. Wright \cite{b4}, E. Landau \cite{b5},

\item  The arithmetic approaches to Z[i] (two-square theorem and generalizations) studied by K. Ireland \& M. Rosen \cite{b6}, H. Cohn \cite{b7} \cite{b8},

\item  The analytic and modular approaches (modular functions, automorphic forms, distribution of representations) studied by E. Grosswald \cite{b9}, H. Iwaniec \& E. Kowalski \cite{b10}, T. M. Apostol \cite{b11}, H. Iwaniec \cite{b12}, K. Ono \cite{b13}, H. H. Chan, C. Krattenthaler \cite{b14}.
\end{enumerate}

\noindent Understanding not only which integers are written in the form $n\mathrm{=}X^{\mathrm{2}}\mathrm{+}Y^{\mathrm{2}}$ but also how these decompositions relate to factorizations $n\mathrm{=}AB$ is at the heart of the developments presented here.
\\[10pt]
\noindent In their previous work \cite{a20}, \cite{a30}, \cite{a40}, the authors studied prime or composite integers of the form $X^{\mathrm{2}}\mathrm{+}c$. Here we require $c\mathrm{=}Y^{\mathrm{2}}$ to be a perfect square and we explain with elementary tools the correspondence between non-trivial factorizations and representations as the sum of two squares. The objective of this paper is threefold: first, to classify the non-trivial integers of the form $X^{\mathrm{2}}\mathrm{+}Y^{\mathrm{2}}$ according to arithmetic invariants; second, to explicitly construct infinite families of values $X$ (linear or quadratic sequences) such that the integers $X^{\mathrm{2}}\mathrm{+}Y^{\mathrm{2}}$ are composite in a similar manner; third, to apply these constructions – in particular, we show the infinity of certain categories stemming from our classification, and we extend a sieve algorithm to yield all decompositions into sums of two squares of given integers.

\begin{definition}{:}

\begin{enumerate} 
\item  We denote $E_c=\left\{X^2+c,\ X\in 2\mathbb{Z}\mathrm{+}1+c\right\}$ for $c\in {\mathbb{N}}^*$.

\item  The set of divisors of an integer $x$ is denoted $\mathcal{D}\left(x\right)\coloneqq \left\{d\in {\mathbb{N}}^*\mathrel{\left|\vphantom{d\in {\mathbb{N}}^* d|x}\right.\kern-\nulldelimiterspace}d|x\right\}$, and by extension the set of divisors of at least one element of a set $E\subset \mathbb{Z}$ is denoted $\mathcal{D}\left(E\right)\coloneqq \bigcup_{x\in E}{\mathcal{D}\left(x\right)}$. We also define ${\mathcal{D}}_p\left(x\right)=\mathcal{D}\left(x\right)\cap \mathbb{P}$ the set of prime divisors of $x$ and ${\mathcal{D}}_p\left(E\right)=\mathcal{D}\left(E\right)\cap \mathbb{P}$ the set of prime divisors of $E$.

\item  Let $E^*_c=\left\{n\in E_c|{\mathrm{gcd} \left(n,c\right)\ }=1\right\}$. The composite integers of $E^*_c$ are called \underbar{non-trivial}. We denote $E^{\left(T\right)}_c$ the complement of $E^*_c$ in $E_c$, i.e. the set consisting of the \underbar{trivial} composite integers of $E_c$, and of $c$ itself when it is odd and different from 1.

\item  For $\left(x,y\right)\in \mathbb{N}\mathrm{\times }{\mathbb{N}}^*$ with  $x<y$, we denote ${\mathcal{P}}_{x,y}\coloneqq \left\{p\in \mathbb{P}\mathrm{\ }\mathrm{|}\mathrm{\ }p\equiv x\ \left[y\right]\right\}$.

\item  If $n\in {\mathbb{Z}}^*$ and $p\in \mathbb{P}$, we denote ${\nu }_p\left(n\right)$ the $p$-adic valuation of $n~$:
\begin{center}
${\nu }_p\left(n\right)={\mathrm{max} \left\{k\in \mathbb{N}\mathrel{\left|\vphantom{k\in \mathbb{N} p^k|n}\right.\kern-\nulldelimiterspace}p^k|n\right\}\ }.$
\end{center}
\end{enumerate}
\end{definition}

\begin{property}{:}
$E^{\left(T\right)}_c\neq \emptyset $ if and only if ${\mathcal{D}}_p\left(c\right)\diagdown \left\{2\right\}\neq \emptyset $, and in this case the trivial integers $X^2+c$ of $E^{\left(T\right)}_c$ are given by a union of arithmetic progressions \cite{a20}:
\begin{center}
$E^{\left(T\right)}_c=\bigcup_{p\in {\mathcal{D}}_p\left(c\right)\diagdown \left\{2\right\}}{\left\{{\left(px\right)}^2+c\in E_c,\ x\in 2\mathbb{Z}\mathrm{+}1+c\right\}}.$
\end{center}
\end{property}

\noindent 
\section{ Euler factorization.}

\noindent 
\subsection{Rewriting the Factorization of the Sum of Two Squares}

\noindent Any integer $X\in 2{\mathbb{N}}^*$ is uniquely written as $X=2^h\left(2N+1\right)$ with $h={\nu }_2\left(X\right)\in {\mathbb{N}}^*,N\in \mathbb{N}$. Let $Y^2$ be an \underbar{odd} perfect square. In this section, we use Gaussian integers to obtain necessary conditions on the factorizations of the elements in $E^*_{Y^2}$.

\begin{definition}{: Let}
\[\mathcal{Q}:\left(h,N,Y\right)\in {\mathbb{N}}^*\mathrm{\times }\mathbb{N}\mathrm{\times }\left(2\mathbb{N}\mathrm{+}1\right)\mapsto {\left(2^h\left(2N+1\right)\right)}^2+Y^2.\] 
For all $Y\in 2\mathbb{N}\mathrm{+}1$, we thus have $\mathcal{Q}\left({\mathbb{N}}^*\mathrm{,}\mathbb{Z}\mathrm{,}Y\right)=E_{Y^2}$.
\end{definition}

\noindent 
\subsection{Gaussian integers}

\noindent We recall some fundamental results regarding the ring $\mathbb{Z}\left[i\right]$ of complexes with integer coefficients, also known as the ring of \underbar{Gaussian integers}.

\begin{property}{:}
The units (or invertible elements) of $\mathbb{Z}\left[i\right]$ are $1,-1,i,-i$.
\end{property}

\begin{definition}{:}
An element $a+ib\in \mathbb{Z}\left[i\right]$ is said to be \underbar{irreducible} if it is not invertible but cannot be decomposed into the product of two non-invertible elements.
\end{definition}

\begin{example}{:}
$1+i,\ 3,\ 1+2i$ are irreducible. $2=\left(1+i\right)\left(1-i\right)$ and $5=\left(1+2i\right)\left(1-2i\right)$ are not.
\end{example}

\begin{property}{:}
The set of irreducible elements of $\mathbb{Z}\left[i\right]$ are (up to a unit) the elements of ${\mathcal{P}}_{3,4}$ and the complexes $x+iy$ such that $x^2+y^2\in \left\{2\right\}\cup {\mathcal{P}}_{1,4}$.
\end{property}

\begin{remark}{:}
To any irreducible element $z$, there correspond three other associated irreducible elements, namely $iz,-z,-iz$. If $x+iy$ is irreducible, the same holds for $x-iy$. Up to multiplication by a unit, we can therefore assume $x\ge \left|y\right|$. We denote by $Z_{\mathbb{P}}$ the set of irreducible of $\mathbb{Z}\left[i\right]$ of this form, omitting $1-i=-i\left(1+i\right)$ which is already accounted for by $\left(1+i\right)$. Let $Z^+_{\mathbb{P}}$ (resp. $Z^-_{\mathbb{P}}$) the set of elements of $Z_{\mathbb{P}}\diagdown \left\{1+i\right\}$ such that $\mathfrak{I}\left(z\right)>0$ (resp. $\mathfrak{I}\left(z\right)<0$). In summary, we have the partition :
\begin{center}
 $Z_{\mathbb{P}}=\left\{1+i\right\}\cup \left(Z^+_{\mathbb{P}}\cup Z^-_{\mathbb{P}}\right)\cup {\mathcal{P}}_{3,4}$
\end{center}
\end{remark}

\begin{theorem}{:}
\label{t1}
$\mathbb{Z}\left[i\right]$ is a Euclidean domain. For any non-zero Gaussian integer $a+ib\in \mathbb{Z}\left[i\right]\diagdown \left\{0\right\}$, there exists a unique unit $w$ and a unique finitely supported family  ${\left(\alpha_z\right)}_{z\in Z_{\mathbb{P}}}$ such that:
\[a+ib=w\prod_{z\in Z_{\mathbb{P}}}{z^{\alpha_z}}.\]
\end{theorem}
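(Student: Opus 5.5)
The plan is to argue in the classical order: Euclidean division with respect to the norm $N(a+ib)=a^2+b^2$, then Bézout and Gauss's (Euclid's) lemma, then existence and uniqueness of factorization. The only genuinely paper-specific point is to check that the normalized set $Z_{\mathbb{P}}$ of the preceding remark contains exactly one representative of each associate class of irreducibles.

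\textbf{Euclidean division.} Given $\alpha,\beta\in\mathbb{Z}[i]$ with $\beta\neq 0$, write $\alpha/\beta=u+iv\in\mathbb{Q}(i)$. Choose $q=m+in$ with $m,n\in\mathbb{Z}$ such that $|u-m|\le 1/2$ and $|v-n|\le 1/2$. Setting $r=\alpha-q\beta$, we get
\[
N(r)=N(\beta)\,N\!\left(\tfrac{\alpha}{\beta}-q\right)\le \tfrac12 N(\beta)<N(\beta),
\]
using the multiplicativity of $N$. Hence $\mathbb{Z}[i]$ is Euclidean, and therefore a principal ideal domain. From this we obtain Bézout's identity and Gauss's lemma: if an irreducible $\pi$ divides $\alpha\beta$ and $\pi\nmid\alpha$, then $\gcd(\pi,\alpha)=1$. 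Writing $1=\lambda\pi+\mu\alpha$ gives $\beta=\lambda\pi\beta+\mu\alpha\beta$, so $\pi\mid\beta$.

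\textbf{Existence.} We induct on $N(a+ib)\ge 1$.
\begin{itemize}
\item If $N(a+ib)=1$, then $a+ib$ is a unit, by the property on units (the elements of norm $1$ are $\pm1,\pm i$).
\item If $a+ib$ is not a unit and not irreducible, it splits as a product of two non-units. Each factor has norm $>1$, so each has strictly smaller norm, and the induction hypothesis applies to both.
\end{itemize}
This gives $a+ib$ as a unit times a product of irreducibles. Each irreducible is then replaced by its representative in $Z_{\mathbb{P}}$, and the resulting units are absorbed into $w$.

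\textbf{Choice of representatives.} This step needs care. By the property classifying irreducibles, every irreducible is associate to one of three kinds: $1+i$, an element $x+iy$ with $x^2+y^2\in\mathcal{P}_{1,4}$, or a prime in $\mathcal{P}_{3,4}$. The four associates of $x+iy$ are $x+iy$, $-y+ix$, $-x-iy$, $y-ix$. For $x^2+y^2=p$ odd we have $|x|\neq|y|$ and $xy\neq0$, so exactly one associate satisfies $x>|y|$. For $p\in\mathcal{P}_{3,4}$ the positive real associate $p$ is the one chosen. For $1+i$, the associates satisfying $x\ge|y|$ are $1+i$ and $1-i$; this is exactly why the remark discards $1-i$. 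So $Z_{\mathbb{P}}$ is a complete set of pairwise non-associate irreducibles.

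\textbf{Uniqueness.} Suppose
\[
w\prod_{z\in Z_{\mathbb{P}}} z^{\alpha_z}=w'\prod_{z\in Z_{\mathbb{P}}} z^{\beta_z}.
\]
Suppose also that some $z$ has $\alpha_z>0$. By Gauss's lemma, $z$ divides some factor $z'$ on the right with $\beta_{z'}>0$. Since $z'$ is irreducible and $z$ is not a unit, $z'$ is an associate of $z$, so $z'=z$ by the choice of representatives. We cancel $z$ and induct on $\sum_z \alpha_z$. When no irreducible factors remain, the equation reads $w=w'$, which gives uniqueness of the unit as well.

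The main obstacle is not the Euclidean argument, which is routine. It is the bookkeeping of associates, and in particular the boundary case $1+i$, where the normalization $x\ge|y|$ alone does not single out a unique representative.
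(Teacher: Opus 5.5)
Your proof is correct and complete. The paper gives no proof of this theorem: it is listed among the ``fundamental results'' on $\mathbb{Z}[i]$ recalled from the classical literature. Your argument is the standard textbook proof the paper implicitly relies on: Euclidean division by rounding $\alpha/\beta$ to the nearest lattice point, Euclid's lemma via B\'ezout, existence by induction on the norm, and uniqueness by cancellation.

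The one thing you add beyond the textbook is the explicit check that $Z_{\mathbb{P}}$ contains exactly one representative of each associate class of irreducibles. This includes the reason $1-i$ has to be discarded, which the paper only asserts in the preceding remark.

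Two minor points:
\begin{itemize}
\item In the base case of the uniqueness induction, you should say why $w=w'\prod_{z} z^{\beta_z}$ forces every $\beta_z=0$. Taking norms does it, since each $z\in Z_{\mathbb{P}}$ has norm at least $2$.
\item The classification of irreducibles you invoke is normally proved using Euclid's lemma in $\mathbb{Z}[i]$, since every irreducible $\pi$ must divide a rational prime factor of $N(\pi)$. Deriving that lemma before using the classification, as you do, keeps the argument non-circular.
\end{itemize}
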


\begin{remark}{:}
In the case where $a+ib=n\in {\mathbb{N}}^*$, we have $w={\left(-i\right)}^{{\nu }_2\left(n\right)}$, $\alpha_{1+i}\left(n\right)=2\nu_2\left(n\right)$, $\alpha_z\left(n\right)=\alpha_{\overline{z}}\left(n\right)=\nu_{{\left|z\right|}^2}\left(n\right)$ for all $z\in Z^+_{\mathbb{P}}$ and $\alpha_p\left(a\right)=\nu_p\left(a\right)$ for all $p\in {\mathcal{P}}_{3,4}$.
\end{remark}

\begin{corollary}{:}
\label{c1}
Let $X,Y$ be integers such that $X^2+Y^2=AB\in 2{\mathbb{N}}+1$ with $\mathcal{D}\left(AB\right)\cap {\mathcal{P}}_{3,4}=\emptyset $. Then there exists two Gaussian integers $\left(z_A,z_B\right)$ such that $X+iY=z_Az_B$, with $A={\left|z_A\right|}^2$ and $B={\left|z_B\right|}^2$. Furthermore, if $X,Y$ are coprime, this pair $\left(z_A,z_B\right)$ is unique up to a unit.
\end{corollary}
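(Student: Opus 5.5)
The plan is to factor $X+iY$ in $\mathbb{Z}[i]$ using Theorem \ref{t1}, and then split its irreducible factors according to the prime factorizations of $A$ and $B$. Since $AB$ is odd and has no prime divisor in ${\mathcal{P}}_{3,4}$, every prime $p$ dividing $AB$ lies in ${\mathcal{P}}_{1,4}$. Hence $p=z\overline{z}$ for a unique $z\in Z^+_{\mathbb{P}}$, with $\overline{z}\in Z^-_{\mathbb{P}}$ up to a unit. Write $X+iY=w\prod_{z}z^{\alpha_z}$, where only irreducibles $z$ with $|z|^2\mid AB$ occur; in particular $1+i$ and elements of ${\mathcal{P}}_{3,4}$ do not. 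Taking norms gives
\[AB=\prod_z |z|^{2\alpha_z}.\]
So for each prime $p=z\overline{z}$ dividing $AB$ we get $\alpha_z+\alpha_{\overline{z}}=\nu_p(AB)=\nu_p(A)+\nu_p(B)$. Here I use that the map $z\mapsto |z|^2$ sends distinct elements of $Z_{\mathbb{P}}$ with $|z|^2\in{\mathcal{P}}_{1,4}$ to the same prime exactly for the pair $\{z,\overline{z}\}$.

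Existence will follow from distributing exponents greedily. For each such prime $p$, set $a_p=\nu_p(A)$ and choose integers $\beta_z,\beta_{\overline{z}}$ with $0\le\beta_z\le\alpha_z$, $0\le\beta_{\overline{z}}\le\alpha_{\overline{z}}$ and $\beta_z+\beta_{\overline{z}}=a_p$. This is possible because $a_p\le \alpha_z+\alpha_{\overline{z}}$; one can take $\beta_z=\min(a_p,\alpha_z)$. Then define
\[z_A=\prod_z z^{\beta_z},\qquad z_B=w\prod_z z^{\alpha_z-\beta_z}.\]
By construction $X+iY=z_Az_B$. Moreover $|z_A|^2=\prod_p p^{a_p}=A$, and then $|z_B|^2=AB/A=B$.

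For uniqueness when $\gcd(X,Y)=1$, I first show that for each prime $p\mid AB$ at most one of $\alpha_z,\alpha_{\overline{z}}$ is nonzero. If both $z$ and $\overline{z}$ divided $X+iY$, then $p=z\overline{z}$ would divide $X+iY$ in $\mathbb{Z}[i]$, since $z$ and $\overline{z}$ are non-associate irreducibles (because $p\neq2$). That would mean $p\mid X$ and $p\mid Y$, a contradiction. So for each $p$ exactly one of the two conjugates divides $X+iY$, to exponent $\nu_p(AB)$. Now suppose $X+iY=z'_Az'_B$ is another decomposition with $|z'_A|^2=A$. By unique factorization, $z'_A$ is a unit times a product of irreducibles dividing $X+iY$. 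Thus $z'_A=u\prod_{p\mid A}z_p^{\gamma_p}$, where $z_p$ is the unique conjugate dividing $X+iY$. The norm condition forces $\gamma_p=\nu_p(A)$, so $z'_A$ is $z_A$ up to a unit, and then $z'_B$ is determined up to the inverse unit.

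The main obstacle is the uniqueness step: justifying that $z$ and $\overline{z}$ are non-associate for $|z|^2\in{\mathcal{P}}_{1,4}$, and that $p\mid X+iY$ in $\mathbb{Z}[i]$ forces $p\mid\gcd(X,Y)$. The first holds because $\overline{z}\in\{\pm z,\pm iz\}$ would force $|x|=|y|$ for $z=x+iy$, i.e. $|z|^2=2x^2$, which is not an odd prime. The second holds because $(X+iY)/p\in\mathbb{Z}[i]$ means exactly that $p\mid X$ and $p\mid Y$.
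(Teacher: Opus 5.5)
Your proof is correct and follows essentially the paper's route: both rest on unique factorization in $\mathbb{Z}[i]$ and on each $p\in\mathcal{P}_{1,4}$ splitting as $p=\pi\overline{\pi}$ with $\pi,\overline{\pi}$ non-associate. For existence, your explicit exponent bookkeeping replaces the paper's maximal-divisor contradiction, and your uniqueness step is the same observation that $\gcd(X,Y)=1$ forbids both $\pi$ and $\overline{\pi}$ from dividing $X+iY$. One small slip: $\overline{z}=\pm z$ forces $xy=0$ rather than $|x|=|y|$, but then $|z|^2$ is a perfect square and hence not prime, so your non-associativity claim still holds.
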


\paragraph{\textnormal{\underbar{\textit{Proof}}}}{:}
We have $\left(X+iY\right)\left(X-iY\right)=AB$. By uniqueness of the factorization into irreducible elements in $\mathbb{Z}\left[i\right]$ (theorem 1), we can write $X+iY=z_1z_2$ such that ${\left|z_1\right|}^2$ is a \underbar{maximal} divisor of $A$. By contradiction, suppose ${\left|z_1\right|}^2\neq A$. Then there exists $\pi\in Z_{\mathbb{P}}$ such that ${\left|z_1\right|}^2\pi$ divides $A$. Since $\pi\notin {\mathcal{P}}_{3,4} \cup \left\{1+i\right\}$, we have $\overline{\pi}\neq \pi\in Z_{\mathbb{P}}$ and $\overline{\pi}$ also divides $A$. Thus ${\left|z_1\pi\right|}^2$ divides $A$ which implies that ${\left|\pi\right|}^2$ divides $\frac{AB}{{\left|z_1\right|}^2}=\frac{X+iY}{z_1}.\frac{X-iY}{\overline{z_1}}$. It follows that $\pi$ or $\overline{\pi}$ must divide $\frac{X+iY}{z_1}$, which contradicts the maximality of ${\left|z_1\right|}^2$.
\\[10pt]
\noindent Thus ${\left|z_1\right|}^2=A$ and consequently ${\left|z_2\right|}^2=B$.
\\[10pt]
\noindent If, in addition, $X,Y$ are coprime, let $z'_A,z'_B$ be another pair satisfying the same properties. For any irreducible factor $\pi$ of $z_A$ we have ${\left|\pi\right|}^2 | A={\left|z'_A\right|}^2$ so either $\pi$ or $\overline{\pi}$ divides $z'_A$. However, if $\overline{\pi}$ divides $z'_A$, then $\pi|X+iY$ and $\overline{\pi}|X+iY$, which implies that ${\left|\pi\right|}^2|X+iY$. This would mean that $X$ and $Y$ share a common factor, which is a contradiction since they are coprime. Therefore $z_A$ and $z'_A$ share the same irreducible factors, from which it follows that there exists a unit $w$ such that $\left(z'_A,z'_B\right)=\left(wz_A,\overline{w}z_B\right)\blacksquare $

\noindent 
\subsection{Fermat's composite numbers}

\begin{proposition}{:}
\label{p1}
The set of prime divisors of $E^*_{Y^2}$ is given by:
\begin{center}
${\mathcal{D}}_p\left(E^*_{Y^2}\right)={\mathcal{P}}_{1,4}\backslash \mathcal{D}\left(Y\right).$
\end{center}
\end{proposition}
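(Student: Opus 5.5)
Context: $Y$ is odd and $E^*_{Y^2}=\{X^2+Y^2 : X\in 2\mathbb{Z}+1+Y^2,\ \gcd(X^2+Y^2,Y^2)=1\}$. Since $Y$ is odd, $X$ is even, so every element of $E^*_{Y^2}$ is odd. Moreover $\gcd(X^2+Y^2,Y)=\gcd(X,Y)$, so $\gcd(X,Y)=1$. I will prove the two inclusions separately.

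\emph{Inclusion $\subseteq$.} Let $p$ be prime with $p\mid X^2+Y^2$ for such a pair $(X,Y)$. Since $X^2+Y^2$ is odd, $p\neq 2$. If $p\mid Y$, then $p\mid X$, contradicting $\gcd(X,Y)=1$; hence $p\notin\mathcal{D}(Y)$. Now $Y$ is invertible modulo $p$, so $(XY^{-1})^2\equiv -1\ [p]$. Thus $-1$ is a quadratic residue modulo $p$, and the element $XY^{-1}$ has order $4$ in $(\mathbb{Z}/p\mathbb{Z})^*$. Therefore $4\mid p-1$, i.e.\ $p\in\mathcal{P}_{1,4}$. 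One could phrase this via Gaussian integers instead (a prime of $\mathcal{P}_{3,4}$ is irreducible in $\mathbb{Z}[i]$, so if it divides $(X+iY)(X-iY)$ it divides $X$ and $Y$), but the Lagrange-order argument is shorter.

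\emph{Inclusion $\supseteq$.} Let $p\in\mathcal{P}_{1,4}$ with $p\nmid Y$. I need an even $X$ with $p\mid X^2+Y^2$ and $\gcd(X,Y)=1$.
\begin{enumerate}
\item Since $p\equiv 1\ [4]$, there is an integer $u$ with $u^2\equiv -1\ [p]$: the cyclic group $(\mathbb{Z}/p\mathbb{Z})^*$ has an element of order $4$. Alternatively, use the earlier Property that $p$ splits in $\mathbb{Z}[i]$, write $p=a^2+b^2$, and take $u=ab^{-1}$.
\item Set $X_0\equiv uY\ [p]$, so that $X_0^2+Y^2\equiv 0\ [p]$.
\item By the Chinese remainder theorem on the pairwise coprime moduli $p$, $2$ and the primes $q\mid Y$ (note $q\neq p$ and $q\neq 2$), choose $X\equiv X_0\ [p]$, $X\equiv 0\ [2]$, and $X\equiv 1\ [q]$ for each prime $q\mid Y$.
\end{enumerate}
Then $\gcd(X,Y)=1$, $X$ is even, and $p\mid X^2+Y^2$. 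Hence $X^2+Y^2\in E^*_{Y^2}$ and $p\in\mathcal{D}_p(E^*_{Y^2})$.

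The only step with real content is the existence of a square root of $-1$ modulo $p$ for $p\equiv 1\ [4]$. I would take it from cyclicity of $(\mathbb{Z}/p\mathbb{Z})^*$, or equivalently from the earlier Property on irreducibles of $\mathbb{Z}[i]$. The rest is bookkeeping with the CRT, where the point to watch is keeping $X$ even so that $X\in 2\mathbb{Z}+1+Y^2$.
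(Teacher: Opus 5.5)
Your proof is correct and follows the same outline as the paper's. The forward inclusion rests on $-1$ being a square modulo $p$. For the converse you take a root of $X^2+Y^2\equiv 0\ [p]$ and then adjust $X$ so that it is coprime to $Y$; the paper instead cites Theorem 2 of \cite{a40} for the root and shifts by $\alpha p$ with $\alpha p\equiv 1-X\ [Y]$. Your CRT step also forces $X$ to be even, which is needed for $X\in 2\mathbb{Z}+1+Y^2$ and which the paper leaves implicit.

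One small slip: $\gcd(X^2+Y^2,Y)=\gcd(X,Y)$ is false as an equality (take $X=6$, $Y=9$), but the two are equal to $1$ simultaneously, and that is all you use.
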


\paragraph{\textnormal{\underbar{\textit{Proof}}}}{:}
Let $p\in {\mathcal{D}}_p\left(E^*_{Y^2}\right)$. Then there exists an integer $X$ coprime to $Y$ such that $p|X^2+Y^2$. In particular, we necessarily have $p\equiv 1\ \left[4\right]$ and $p$ does not divide $Y$.

\noindent Conversely, if $p\in {\mathcal{P}}_{1,4}\backslash \mathcal{D}\left(Y\right)$ then Theorem 2 of \cite{a40} ensures the existence of an integer $X$ such that $p|X^2+Y^2$. This remains true if we replace $X$ with $X+\alpha p$. Since $p$ and $Y$ are coprime we can choose $\alpha$ such that $X+\alpha p$ is also coprime to Y (for instance, by choosing $\alpha$ such that $\alpha p\equiv -X+1\ \left[Y\right]$).$\blacksquare $

\begin{definition}{:}

\begin{enumerate}
\item  Let $F=\left\{X^2+Y^2,\ \left(X,Y\right)\in \mathbb{Z}\mathrm{\times }\mathbb{Z}\right\}$ be the set of sums of two squares.

\item  For $x,y\in \left\{0,1\right\}$ we define $F^{\left(x,y\right)}=\left\{X^2+Y^2,\ \left(X,Y\right)\in \left(2\mathbb{Z}\mathrm{+}x\right)\times \left(2\mathbb{Z}\mathrm{+}y\right)\right\}.$

\item  Let $G=\left\{n\in {\mathbb{N}}^*\mathrel{\left|\vphantom{n\in {\mathbb{N}}^* \forall p\in {\mathcal{D}}_p\left(n\right),\ p\in {\mathcal{P}}_{3,4}\ and\  \nu_p\left(n\right)\in 2\mathbb{N}}\right.\kern-\nulldelimiterspace}\forall p\in {\mathcal{D}}_p\left(n\right),\  p\in {\mathcal{P}}_{3,4} \mathrm{\ and\ } \nu_p\left(n\right)\in 2\mathbb{N}\right\}$ be the set of squares whose prime factors are congruent to 3 modulo 4.

\item  Let $H=\left\{2^\alpha,\ \alpha\in \mathbb{N}\right\}$.

\item  Finally, let $\Psi\boldsymbol{=}\bigcup_Y{E^*_{Y^2}}$.
\end{enumerate}
\end{definition}

\noindent We additionally observe that $F=F^{\left(0,1\right)}\cup F^{\left(0,0\right)}\cup F^{\left(1,1\right)},\ and\ F^{\left(0,1\right)}=F^{\left(1,0\right)}=F\cap \left(2\mathbb{Z}\mathrm{+}1\right)$. In this section, we establish further relations on $F$.

\begin{proposition}{:}
\label{p2}
$F^{\left(0,1\right)}=\Psi G$ and the decomposition of any element $s\in F^{\left(0,1\right)}$ into a product $s=\psi g$ with $\left(\psi,g\right)\in \Psi \times G$ is unique.
\end{proposition}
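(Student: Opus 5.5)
The plan is to prove the two inclusions $\Psi G\subset F^{(0,1)}$ and $F^{(0,1)}\subset \Psi G$, and then to get uniqueness by separating prime factors according to their residue modulo $4$. First I unpack $\Psi$. For $Y$ odd, the condition $X\in 2\mathbb{Z}+1+Y^2$ means that $X$ is even, and $\gcd(X^2+Y^2,Y^2)=1$ is equivalent to $\gcd(X,Y)=1$. So $\Psi$ is the set of integers $X^2+Y^2$ with $X$ even, $Y$ odd and $\gcd(X,Y)=1$. In particular $\Psi\subset F^{(0,1)}$, and every element of $\Psi$ is odd. By Proposition \ref{p1}, the prime divisors of elements of $\Psi$ all lie in $\mathcal{P}_{1,4}$. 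On the other hand, every $g\in G$ is of the form $g=m^2$ with $m$ odd and all prime factors of $m$ in $\mathcal{P}_{3,4}$.

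\emph{Inclusion $\Psi G\subset F^{(0,1)}$.} Take $\psi=X^2+Y^2\in\Psi$ and $g=m^2\in G$. Then $\psi g=(mX)^2+(mY)^2$. Since $m$ is odd, $mX$ is even and $mY$ is odd, so $\psi g\in F^{(0,1)}$.

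\emph{Inclusion $F^{(0,1)}\subset \Psi G$.} Let $s=X^2+Y^2$ be odd.
\begin{enumerate}
\item For $p\in\mathcal{P}_{3,4}$ with $p\mid s$, the integer $p$ is irreducible in $\mathbb{Z}[i]$ (it is in $Z_{\mathbb{P}}$). Since $p\mid (X+iY)(X-iY)$ and $p=\overline{p}$, we get $p\mid X+iY$, hence $p\mid X$ and $p\mid Y$.
\item Dividing $X$ and $Y$ by $p$ and iterating, we get that $\nu_p(s)$ is even. Setting $m=\prod_{p\in\mathcal{P}_{3,4}}p^{\nu_p(s)/2}$, we also get $m\mid X$ and $m\mid Y$.
\item Put $g=m^2\in G$. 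Then $\psi:=s/g$ is odd, all its prime factors lie in $\mathcal{P}_{1,4}$, and $\psi$ is again a sum of two squares.
\end{enumerate}

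The main obstacle is that $X/m$ and $Y/m$ need not be coprime, since they may share primes $\equiv 1\ [4]$. So instead of reusing them, I will build a \emph{primitive} representation of $\psi$ directly in $\mathbb{Z}[i]$.
\begin{enumerate}
\item For each prime $q\mid\psi$ (necessarily $q\in\mathcal{P}_{1,4}$), write $q=\pi_q\overline{\pi_q}$ with $\pi_q\in Z^+_{\mathbb{P}}$. Set $z=\prod_q \pi_q^{\nu_q(\psi)}=a+ib$, choosing the same member $\pi_q$ of each conjugate pair for all copies of $q$. Then $|z|^2=\psi$ by the Remark following Theorem \ref{t1}.
\item Suppose a rational prime $r$ divides both $a$ and $b$; then $r\mid z$ in $\mathbb{Z}[i]$. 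If $r\in\mathcal{P}_{3,4}$, then $r\mid |z|^2=\psi$, which is impossible. If $r=2$, then $2\mid\psi$, again impossible. If $r\in\mathcal{P}_{1,4}$, then both $\pi_r$ and $\overline{\pi_r}$ divide $z$. By uniqueness of factorization (Theorem \ref{t1}) this contradicts the construction, since $\overline{\pi_r}$ does not occur in $z$.
\item Hence $\gcd(a,b)=1$. Since $\psi$ is odd, exactly one of $a,b$ is even. Multiplying $z$ by the unit $i$ if necessary, we may assume $a$ is even and $b$ is odd.
\end{enumerate}
Then $\psi=a^2+b^2\in E^*_{b^2}\subset\Psi$, and $s=\psi g\in\Psi G$.

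\emph{Uniqueness.} Suppose $s=\psi g=\psi' g'$ with $\psi,\psi'\in\Psi$ and $g,g'\in G$. The prime factors of $\psi$ and $\psi'$ lie in $\mathcal{P}_{1,4}$, while those of $g$ and $g'$ lie in $\mathcal{P}_{3,4}$. Therefore both $g$ and $g'$ equal the $\mathcal{P}_{3,4}$-part $\prod_{p\in\mathcal{P}_{3,4}}p^{\nu_p(s)}$ of $s$. It follows that $g=g'$ and $\psi=\psi'$.
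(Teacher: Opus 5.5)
Your proof is correct and follows essentially the same route as the paper: uniqueness by separating the $\mathcal{P}_{1,4}$- and $\mathcal{P}_{3,4}$-parts via Proposition \ref{p1}, and existence by taking $g$ to be the $\mathcal{P}_{3,4}$-part of $s$ and building a primitive representation $a+ib=\prod_q \pi_q^{\nu_q(\psi)}$ from one member of each conjugate pair, with coprimality coming from unique factorization in $\mathbb{Z}[i]$. You also supply three details the paper leaves implicit: the easy inclusion $\Psi G\subset F^{(0,1)}$; a direct Gaussian-integer proof that $\nu_p(s)$ is even for $p\in\mathcal{P}_{3,4}$, where the paper simply cites Fermat; and the multiplication by $i$ that makes $a$ even and $b$ odd, so that $\psi$ genuinely lies in some $E^*_{b^2}$.
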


\paragraph{\textnormal{\underbar{\textit{Proof}}}}{:}
We know from \Cref{p1} that ${\mathcal{D}}_p\left(\Psi \right)={\mathcal{P}}_{1,4}$ and by construction ${\mathcal{D}}_p\left(G\right)={\mathcal{P}}_{3,4}$. The uniqueness of the prime factorization thus ensures that the decomposition $s=\psi g$, if it exists, is unique.
\\[10pt]
Conversely, let $s\in F^{\left(0,1\right)}$, and set $g=\prod_{p\in {\mathcal{P}}_{3,4}}{p^{\nu_p\left(s\right)}}$. Fermat's two-square theorem ensures that $g$ is a perfect square, hence $g\in G$.
\\[10pt]
\noindent Set $\psi=\frac{s}{g}$, we now show that it belongs to $\Psi$. By construction, all prime factors of $\psi$ belong to ${\mathcal{P}}_{1,4}$. The decomposition of $\psi$ into irreducible elements in $\mathbb{Z}\left[i\right]$ is given by:
\begin{center}
 $\psi=\prod_{p\in {\mathcal{P}}_{1,4}}{{\left(a_p+ib_p\right)}^{\nu_p\left(\psi\right)}{\left(a_p-ib_p\right)}^{\nu_p\left(\psi\right)}}$
\end{center}
where $a_p\ge b_p\in {\mathbb{N}}^*$ are the unique integers such that $p=a^2_p+b^2_p$. Let $a,b$ such that $a+ib=\prod_{p\in {\mathcal{P}}_{1,4}}{{\left(a_p+ib_p\right)}^{\nu_p\left(\psi\right)}}$. Then, by construction $\psi=a^2+b^2$. If $q$ were a common prime divisor common of $a$ and $b$ then $q$ would belong to ${\mathcal{P}}_{1,4}$. This would imply that $a_q-ib_q$ divides $a+ib$ in $\mathbb{Z}\left[i\right]$, which is impossible.
\\[10pt]
\noindent This proves that $F^{\left(0,1\right)}=\Psi G$.\textit{ }$\blacksquare $

\begin{proposition}{:}
\label{p3}
$F=F^{\left(0,1\right)}H=\Psi GH$.
Furthermore, every element $n\in F$ can be uniquely written as $n=\psi gh$ with $\left(\psi,g,h\right)\in \Psi \times G \times H$.
\end{proposition}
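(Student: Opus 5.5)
We prove $F=F^{\left(0,1\right)}H$ by double inclusion; the equality $F^{\left(0,1\right)}H=\Psi GH$ then follows from \Cref{p2}. The key tool is the identity $2\left(a^2+b^2\right)=\left(a+b\right)^2+\left(a-b\right)^2$ together with its converse: if $n=X^2+Y^2$ is even, then $X\equiv Y\ \left[2\right]$, and $n/2=\left(\frac{X+Y}{2}\right)^2+\left(\frac{X-Y}{2}\right)^2$ is again a sum of two squares.

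For $F^{\left(0,1\right)}H\subset F$, we argue by induction on $\alpha$. The identity shows that $2F\subset F$, so $2^\alpha F^{\left(0,1\right)}\subset F$ for every $\alpha\in\mathbb{N}$.

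For $F\subset F^{\left(0,1\right)}H$, let $n\in F$ with $n\ge 1$; the value $0$ is excluded since $F$ should be read inside $\mathbb{N}^*$, as otherwise $0\notin \Psi GH$. Write $n=2^{\alpha}m$ with $m$ odd and $\alpha=\nu_2\left(n\right)$. Applying the halving step $\alpha$ times, each time to an even element of $F$, shows that $m\in F$. Since $m$ is odd, $m\in F\cap\left(2\mathbb{Z}+1\right)=F^{\left(0,1\right)}$, so $n=m\,2^\alpha\in F^{\left(0,1\right)}H$.

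For uniqueness, suppose $n=\psi g h=\psi' g' h'$. By \Cref{p1}, every prime factor of an element of $\Psi$ lies in $\mathcal{P}_{1,4}$, and every prime factor of an element of $G$ lies in $\mathcal{P}_{3,4}$, so $\psi g$ and $\psi'g'$ are odd. Comparing $2$-adic valuations gives $h=h'=2^{\nu_2\left(n\right)}$, hence $\psi g=\psi' g'$. This common value lies in $F^{\left(0,1\right)}$, since it belongs to $\Psi G$, so the uniqueness part of \Cref{p2} gives $\psi=\psi'$ and $g=g'$.

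The argument is routine. The only delicate point is the parity argument in the halving step, which guarantees that $\frac{X\pm Y}{2}$ are integers.
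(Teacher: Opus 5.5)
Your proof is correct and follows the paper's route: you split off $2^{\nu_2(n)}$ and then invoke \Cref{p2} both for the odd part and for uniqueness. Beyond the paper's one-line proof, you justify the two facts it takes for granted, namely that the odd part of an element of $F$ lies in $F^{(0,1)}$ and that $2^\alpha F^{(0,1)}\subset F$, using $2(a^2+b^2)=(a+b)^2+(a-b)^2$ and its halving converse; you also rightly note that $0\in F$ must be excluded for the stated equality to hold.
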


\paragraph{\textnormal{\underbar{\textit{Proof}}}}{:}
The result follows immediately by applying \Cref{p2} to the unique factorization $n=s\cdot 2^{{\nu }_2\left(n\right)}$, where $s\in F^{\left(0,1\right)}$ and $2^{{\nu }_2\left(n\right)}\in H$. $\blacksquare $

\noindent 
\subsection{Composite numbers of $\Psi$}

\noindent 
\subsubsection{Euler's Four-Square Identity }

\noindent In this section, we study the equation:
\[ \begin{array}{c}
A^2=B^2+C^2+D^2 \ \ \ \ \ \ \ \ \left(D_1\right) \end{array}
\] 
\begin{proposition}{:}
\label{p4}
We have the formal identity of $\mathbb{Z}\left[X,Y,Z,T\right]$:
\end{proposition}
\[ \begin{array}{c}
{\left(X^2+Y^2+Z^2+T^2\right)}^2= \\ 
{\left(X^2+Y^2-Z^2-T^2\right)}^2+4{\left(XZ-YT\right)}^2+4{\left(XT+YZ\right)}^2\ \ \ \ \ \ \ \left(D_2\right) \end{array}
\] 

\paragraph{\textnormal{\underbar{\textit{Proof}}}}{:}
Expanding the right-hand side, we obtain:
\begin{align*}
&{\left(X^2+Y^2-Z^2-T^2\right)}^2+4{\left(XZ-YT\right)}^2+4{\left(XT+YZ\right)}^2 \\
&=X^4+Y^4+Z^4+T^4+2\left(X^2Y^2-X^2Z^2-X^2T^2-Y^2Z^2-Y^2T^2+Z^2T^2\right) \\
&+4\left(X^2Z^2+Y^2T^2+X^2T^2+Y^2Z^2\right)+\left(8-8\right)XYZT \\
&={\left(X^2+Y^2+Z^2+T^2\right)}^2.
\end{align*}

\noindent This establishes the identity. $\blacksquare $
\\[10pt]
\noindent Thus, $\left(D_2\right)$ provides infinitely many solutions to $\left(D_1\right)$. Conversely, any \underbar{primitive} solution $\left(A,B,C,D\right)$ of $\left(D_1\right)$ \ $\left(that\ is,\ such\ that\ gcd\left(A,B,C,D\right)=1\right)$
can be parameterized in the form of $\left(D_2\right)$:

\begin{theorem}{\textbf{(Catalan \cite{a50}):}}
\label{t2}
For any \underbar{primitive solution} $\left(A,B,C,D\right)\in {\mathbb{N}}^4$ of $\left(D_1\right)$, there exists a quadruple ($x,y,z,t)\in {\mathbb{N}}^4$ such that, up to a permutation of $\left(B,C,D\right)$, we have:
\end{theorem}
\[ \begin{array}{c}
\left\{ \begin{array}{l}
A=x^2+y^2+z^2+t^2 \\ 
B=x^2+y^2-z^2-t^2 \\ 
C=2\left(xz-yt\right) \\ 
D=2\left(xt+yz\right) \end{array}
\right .\ \left(D_2\right) \end{array}
\] 
No permutation is needed if $B$ is odd.

\paragraph{\textnormal{\underbar{\textit{Proof}}}}{: }
We work again in the ring of Gaussian integers $\mathbb{Z}\left[i\right]$. Let $\left(A,B,C,D\right)$ be a solution of $\left(D_1\right)$ such that ${\mathrm{gcd} \left(A,B,C,D\right)\ }=1$. Therefore, at least one of them must be odd. Since any perfect square is congruent to 0 or 1 modulo 4, we deduce that $A$ must be odd, as well as exactly one of the integers among $B,C,D$. Up to a permutation, we can assume that $B$ is odd. The equation $\left(D_1\right)$ can then be rewritten as: 
\begin{center}
$\left(A+B\right)\left(A-B\right)=\left(C+iD\right)\left(C-iD\right).$
\end{center}
\noindent By uniqueness of the factorization into irreducible elements in $\mathbb{Z}\left[i\right]$, there exist $x,y,z,t\in \mathbb{Z}$ such that:
\begin{center}
$\left(C+iD\right)=2\left(x+iy\right)\left(z+it\right)$ \\
\end{center}
and:
\[ \begin{array}{c}
A+B=2\left(x+iy\right)\left(x-iy\right)=2\left(x^2+y^2\right) \\ 
A-B=2\left(z+it\right)\left(z-it\right)=2\left(z^2+t^2\right) \\ 
\end{array}
\] 
It immediately follow that:
\[ \begin{array}{c}
A=x^2+y^2+z^2+t^2 \\ 
B=x^2+y^2-z^2-t^2 \\ 
\end{array}
\] 
while:
\[ \begin{array}{c}
C=2\left(xz-yt\right), \\ 
D=2\left(xt+yz\right). \\ 
\end{array}
\] 

\noindent This completes the proof. $\blacksquare $

\begin{note}{:}
See also \cite{a60} for an alternative proof.
\end{note}

\begin{note}{:}
The primitivity hypothesis is essential, as shown by the counterexample $\left(9,-3,6,6\right)~$: if it could be written in the form $\left(D_2\right)$ it would follow that $3=x^2+y^2$, which is impossible. However, the hypothesis can be relaxed by merely assuming that $gcd\left(A,B,C,D\right)$ is a perfect square.
\end{note}

\begin{corollary}{:}
\label{c2}
For any solution of $\left(x,y,z,t\right)$ of $\left(D_2\right)$, we observe that $\left(-x,-y,-z,-t\right)$, $\left(-y,x,t,-z\right)$ and $\left(y,-x,-t,z\right)$ are also solutions. If we impose $x\ge 0$ and $\nu_2\left(x\right)\ge \nu_2\left(y\right)$, the quadruple is \underbar{unique} provided that $\frac{A+B}{2}$ is not a perfect square, ${\nu }_2\left(\frac{A+B}{2}\right)$ is even and ${\mathrm{gcd} \left(\frac{A+B}{2},\frac{A-B}{2}\right)\ }=1$.
\end{corollary}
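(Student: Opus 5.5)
The proof has two parts. The first is a direct verification of the symmetries. The second reduces uniqueness to the uniqueness, up to a unit, of a factorization in $\mathbb{Z}\left[i\right]$, as in \Cref{c1}. I would then show that the two normalizations $x\ge 0$ and $\nu_2\left(x\right)\ge \nu_2\left(y\right)$ select exactly one of the four associated quadruples.

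\emph{Symmetries.} Set $u=x+iy$ and $v=z+it$. By $\left(D_2\right)$ we have $A=\left|u\right|^2+\left|v\right|^2$, $B=\left|u\right|^2-\left|v\right|^2$ and $C+iD=2uv$. The map $\left(x,y,z,t\right)\mapsto\left(-y,x,t,-z\right)$ corresponds to $\left(u,v\right)\mapsto\left(iu,-iv\right)$, and $\left(x,y,z,t\right)\mapsto\left(-x,-y,-z,-t\right)$ corresponds to $\left(u,v\right)\mapsto\left(-u,-v\right)$. More generally the four quadruples listed are exactly $\left(\varepsilon u,\varepsilon^{-1}v\right)$ for $\varepsilon\in\left\{1,-1,i,-i\right\}$. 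These transformations preserve $\left|u\right|^2$, $\left|v\right|^2$ and $uv$, so $A,B,C,D$ are unchanged. One could also expand $xz-yt$ and $xt+yz$ directly.

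\emph{Uniqueness up to a unit.} Write $P=\frac{A+B}{2}=\left|u\right|^2$, $Q=\frac{A-B}{2}=\left|v\right|^2$ and $w=\frac{C+iD}{2}=uv$; these are all determined by $\left(A,B,C,D\right)$. Let $\left(u',v'\right)$ be another pair with $\left|u'\right|^2=P$, $\left|v'\right|^2=Q$ and $u'v'=w$. Since $\gcd\left(P,Q\right)=1$, every irreducible $\pi\in Z_{\mathbb{P}}$ dividing $w$ has $\left|\pi\right|^2$ dividing exactly one of $P$ and $Q$. Hence $\pi$ can divide only the factor whose norm is divisible by $\left|\pi\right|^2$. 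This also covers $1+i$, which goes wholly to whichever of $P,Q$ is even; exactly one is even, since $P+Q=A$ is odd. By \Cref{t1}, $u$ is therefore the product of the full $\pi$-parts of $w$ over those $\pi$ with $\left|\pi\right|^2\mid P$, up to a unit, and the same description applies to $u'$. So $u'=\varepsilon u$ for some unit $\varepsilon$, and then $v'=\varepsilon^{-1}v$. The only solutions are therefore the four quadruples above.

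\emph{Normalization.} This is the step where the remaining hypotheses are used, and I expect it to need the most care. First, $P$ is not a perfect square, so $x\neq 0$ and $y\neq 0$; otherwise $P=x^2$ or $P=y^2$. Next, $\nu_2\left(x\right)\neq\nu_2\left(y\right)$. Indeed, if both equal $k$, then $P=4^k\left(a^2+b^2\right)$ with $a,b$ odd, so $\nu_2\left(P\right)=2k+1$ is odd, contradicting the hypothesis. Multiplication by $i$ sends $\left(x,y\right)\mapsto\left(-y,x\right)$ and thus swaps the roles of $\nu_2\left(x\right)$ and $\nu_2\left(y\right)$. Consequently exactly one of $u$ and $iu$, together with its negative, satisfies $\nu_2\left(x\right)>\nu_2\left(y\right)$. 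Among the two remaining candidates $\pm u$, the sign is fixed by $x>0$, which is possible because $x\neq 0$. Hence exactly one of the four quadruples satisfies the normalization, which proves uniqueness.
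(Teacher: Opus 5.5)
Your proposal is correct and follows essentially the same route as the paper: the units $\pm 1,\pm i$ produce the four associated quadruples, the coprimality of $\frac{A+B}{2}$ and $\frac{A-B}{2}$ forces the splitting $C+iD=2(x+iy)(z+it)$ to be unique up to a unit, and the non-square and even-valuation hypotheses give $x,y\neq 0$ and $\nu_2(x)\neq\nu_2(y)$, so the normalization picks out exactly one of the four quadruples. Your version mainly fills in details the paper leaves implicit, such as the $\nu_2=2k+1$ computation. Your aside that $A$ is odd assumes primitivity, which the corollary does not state, but nothing depends on it, since your general irreducible-factor argument already handles $1+i$.
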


\paragraph{\textnormal{\underbar{\textit{Proof}}}}{:}
Multiplying the factors $\left(x+iy\right)$ and $\left(z+it\right)$ by $-1$, or by $\pm i$ and $\mp i$ respectively, we obtain the three additional solutions. Imposing the conditions $x\ge 0$ and ${\nu }_2\left(x\right)\ge {\nu }_2\left(y\right)$ allows us to single out a unique solution from the four whenever $x,y\neq 0$ and ${\nu }_2\left(x\right)\neq {\nu }_2\left(y\right)$ -- and \textit{a fortiori} when  $\frac{A+B}{2}$ is not a perfect square and ${\nu }_2\left(\frac{A+B}{2}\right)$ is even. In addition, if  $\frac{A+B}{2}$ and $\frac{A-B}{2}$ are coprime, their irreducible factors in $\mathbb{Z}\left[i\right]$ form two disjoint sets. Thus, up to a unit, the decomposition $C+iD=2\left(x+iy\right)\left(z+it\right)$ such that $x+iy|\frac{A+B}{2}$ and $z+it|\frac{A-B}{2}$ is \underbar{unique}. $\blacksquare $

\noindent 
\subsubsection{The quadruple (p,q,r,s)}

\noindent In Proposition 2.5 of \cite{a1}, we showed that any factorization of an odd integer $n=AB$ corresponds to a representation of this number as a difference of two squares $n=U^2-V^2$. In particular, if $n=\left(1+4k\right)\left(1+4k'\right)$ is in $E_{Y^2}$, it can be written $n=X^2+Y^2=U^2-V^2$ with $U=2\left(k+k'\right)+1$ and $V=2\left(k-k'\right)$, which yields the relation $X^2+Y^2+V^2=U^2$. \underbar{Proposition 5} below provides another relation of this form.

\begin{proposition}{:}
\label{p5}
For any factorization $X^2+Y^2=\mathcal{Q}\left(h,N,Y\right)=AB$ of an element of $E_{Y^2}$, the triplet $\left(T,M,L\right)\in \left(2\mathbb{Z}\mathrm{+}1\right)\times \left(2\mathbb{Z}\mathrm{+}1\right)\times 2^{2h-1}\left(2\mathbb{Z}\mathrm{+}1\right)$ given by:

\begin{center}
$\left\{ \begin{array}{l}
T=A+2^{2h-1}B-2^hX \\ 
M=2^hX-A \\ 
L=2^{2h-1}B-2^hX \end{array}
\right.$
\end{center}

\noindent satisfies:

\begin{center}
$ \begin{array}{c}
\ \ \ \ \ \ \ \ \ T^2=M^2+L^2+{\left(2^hY\right)}^2.\ \left(Q\right) \end{array}
$
\end{center}
\end{proposition}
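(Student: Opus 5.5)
The plan is a direct verification. I would establish the identity $(Q)$ by factoring a difference of squares, using only the relation $X^2+Y^2=AB$. The parity and valuation claims on $(T,M,L)$ would then follow from $X=2^h(2N+1)$ with $h\ge 1$ and $Y$ odd.

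For $(Q)$, I would avoid expanding $T^2$ directly and instead compute $T^2-M^2=(T-M)(T+M)$. From the definitions,
\[T+M=2^{2h-1}B,\qquad T-M=2A+2^{2h-1}B-2^{h+1}X,\]
so
\[T^2-M^2=2^{2h}AB+2^{4h-2}B^2-2^{3h}XB.\]
On the other side,
\[L^2+\left(2^hY\right)^2=2^{4h-2}B^2-2^{3h}BX+2^{2h}\left(X^2+Y^2\right).\]
Substituting $X^2+Y^2=AB$ makes the two expressions coincide, which proves $(Q)$. This substitution is the only place the factorization hypothesis enters.

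For membership in the stated product set, I would argue as follows. Since $X$ is even and $Y$ is odd, $AB=X^2+Y^2$ is odd, so $A$ and $B$ are both odd. Next, $2^hX=2^{2h}(2N+1)$ is even. Hence $M=2^hX-A$ is odd. Since $h\ge 1$, the term $2^{2h-1}B$ is even, so $T=A+2^{2h-1}B-2^hX$ is odd as well. Finally,
\[L=2^{2h-1}\left(B-2(2N+1)\right),\]
and $B-2(2N+1)$ is odd because $B$ is odd, so $L\in 2^{2h-1}(2\mathbb{Z}+1)$.

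There is no real obstacle here. The only point requiring care is organizing the algebra via $(T-M)(T+M)$ so that the product $AB$ appears explicitly and can be replaced by $X^2+Y^2$; a brute-force expansion of $T^2$ would hide this cancellation.
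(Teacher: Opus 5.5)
Your proof is correct and takes essentially the paper's route: a direct algebraic verification whose only input is $X^2+Y^2=AB$, followed by the same parity argument, including $L=2^{2h-1}(B-2(2N+1))$. The only difference is bookkeeping. The paper uses the formal identity $(x-a)^2+(x-b)^2+y^2=(x-a-b)^2+x^2+y^2-2ab$ with $x=2^hX$, $y=2^hY$, $a=A$, $b=2^{2h-1}B$, so that $T=a+b-x$, $M=x-a$, $L=b-x$, and the term $x^2+y^2-2ab=2^{2h}(X^2+Y^2-AB)$ vanishes; you instead factor $T^2-M^2$.
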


\paragraph{\textnormal{\underbar{\textit{Proof}}}}{:}
We have the following formal identity in $\mathbb{Z}\left[x,y,a,b\right]$:
\begin{center}
${\left(x-a\right)}^2+{\left(x-b\right)}^2+y^2={\left(x-a-b\right)}^2+x^2+y^2-2ab.$
\end{center}
\noindent Knowing that $X^2+Y^2=AB$, we let $x=2^hX$, $y=2^hY$, $a=A$ and $b=2^{2h-1}B$ so that the term $x^2+y^2-2ab$ vanishes.

\noindent The proposed triplet thus satisfies $\left(Q\right)$. Furthermore, since $A$ and $B$ are odd and $h\ge 1$, it follows that $T$ and $M$ are also odd, while $L=2^{2h-1}\left(B-2\left(2N+1\right)\right)\in 2^{2h-1}\left(2\mathbb{Z}\mathrm{+}1\right)$. $\blacksquare $
 
\begin{remark}{:}
\label{r3}
In \Cref{p5}, we require $h=\nu_2\left(X\right)$ but the identity $\left(Q\right)$ holds for any value of $h\in {\mathbb{N}}^*$.
\end{remark}

\begin{remark}{:}
\label{r4}
$\left(Q\right)$ Implies that $\left|T\right|>{\mathrm{max} \left(\left|M\right|,\left|L\right|,2^hY\right)}$. Since $T+M=2^{2h-1}B\ge 0$, we have $T\ge 0$.
\\[10pt]
\noindent Under the additional hypothesis ${\mathrm{gcd} \left(T,M,L,2^hY\right)\ }={\mathrm{gcd} \left(A,B,X,Y\right)\ }=1$, \Cref{t2} allows us to state that there exists a quadruple $\left(p,q,r,s\right)\in {\mathbb{Z}}^4$ such that: 
\begin{center}
$\left\{ \begin{array}{l}
T=p^2+q^2+r^2+s^2, \\ 
\left|M\right|=p^2+q^2-r^2-s^2, \\ 
\left|L\right|=2\left(pr-qs\right), \\ 
2^hY=2\left(ps+qr\right). \end{array}
\right.$
\end{center}
\end{remark}

\noindent By replacing $\left(p,q,r,s\right)$ with $\left(r,s,p,q\right)$ if necessary, we can assume $M=p^2+q^2-r^2-s^2$. Then, by replacing $\left(p,q,r,s\right)$ with $\left(-p,q,r,-s\right)$ if necessary, we can also assume $L=2\left(pr-qs\right)$.
\\[10pt]
\noindent Finally, per \Cref{c2} we can also assume that $p\ge 0$ and ${\nu }_2\left(p\right)\ge {\nu }_2\left(q\right)$, however the unicity conditions are not necessarily verified.
\\[10pt]
\noindent The following propositions establish several key properties on $\left(p,q,r,s\right)$.

\begin{proposition}{:}
\label{p6}
Retain assumptions of \Cref{p5}, and additionally assume ${\mathrm{gcd} \left(A,B,X,Y\right)\ }=1$. Consider a quadruple $\left(p,q,r,s\right)$ such that:

\begin{center}
$ \begin{array}{c}
\left\{ \begin{array}{l}
T=p^2+q^2+r^2+s^2\ , \\ 
M=p^2+q^2-r^2-s^2, \\ 
L=2\left(pr-qs\right), \\ 
2^hY=2\left(ps+qr\right) \\ 
p\ge 0 \\ 
\nu_2\left(p\right)\ge \nu_2\left(q\right) \end{array}
\right.\ \ \left(P_0\right) \end{array}
$
\end{center}
\noindent Then we have:$ \begin{array}{c}
p^2+q^2\neq 0\ \mathrm{and\ }\ r^2+s^2\neq 0.\ \ \left(P_1\right) \end{array}
$
\\[10pt]
\noindent If, in addition, $B\neq 1$, then we also have:
\begin{center}
$ \begin{array}{c}
p^2+q^2\boldsymbol{\neq }4^{h-1}\ \mathrm{and\ }\ r^2+s^2\boldsymbol{\neq }4^{h-1}.\ \ \left(P_2\right) \end{array}
$
\end{center}
\end{proposition}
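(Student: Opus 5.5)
The plan is to express $p^2+q^2$ and $r^2+s^2$ directly in terms of $A,B,X,h$, using only the first two equations of $\left(P_0\right)$ and the definitions of $T,M$ in \Cref{p5}. Adding and subtracting $T=p^2+q^2+r^2+s^2$ and $M=p^2+q^2-r^2-s^2$ gives
\[p^2+q^2=\frac{T+M}{2}=4^{h-1}B,\qquad r^2+s^2=\frac{T-M}{2}=A+4^{h-1}B-2^hX.\]
The last two equations of $\left(P_0\right)$ and the normalisations $p\ge 0$, $\nu_2\left(p\right)\ge\nu_2\left(q\right)$ play no role. The hypothesis $\gcd\left(A,B,X,Y\right)=1$ only guarantees, via \Cref{r4}, that such a quadruple exists. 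Throughout I take $A,B\ge 1$, which is implicit in the factorization $AB=X^2+Y^2>0$.

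For $\left(P_1\right)$, the first quantity is $4^{h-1}B>0$. For the second, I will use AM--GM:
\[A+4^{h-1}B\ \ge\ 2\sqrt{4^{h-1}AB}\ =\ 2^h\sqrt{X^2+Y^2}\ >\ 2^h\left|X\right|,\]
where the last inequality is strict because $Y$ is odd, hence nonzero. Therefore $r^2+s^2>0$.

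For $\left(P_2\right)$, the equality $p^2+q^2=4^{h-1}$ is equivalent to $B=1$, so it is excluded by assumption. The more delicate point is excluding $r^2+s^2=4^{h-1}$, which I expect to be the main step. Suppose $A-2^hX+4^{h-1}\left(B-1\right)=0$. I will multiply by $B$ and substitute $AB=X^2+Y^2$, then complete the square in $X$, obtaining
\[\left(X-2^{h-1}B\right)^2+Y^2=4^{h-1}B.\]
Next I use the $2$-adic structure. Since $X=2^h\left(2N+1\right)$ and $B$ is odd, we have $X-2^{h-1}B=2^{h-1}u$ with $u=2\left(2N+1\right)-B$ odd. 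The identity becomes $4^{h-1}\left(B-u^2\right)=Y^2$.

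I then split into two cases:
\begin{enumerate}
\item If $h\ge 2$, the left side is divisible by $4$, while $Y^2$ is odd; this is a contradiction.
\item If $h=1$, the identity reads $B=u^2+Y^2$, a sum of two odd squares, hence $B\equiv 2\ \left[4\right]$; this contradicts $B$ being odd.
\end{enumerate}
In both cases $r^2+s^2\neq 4^{h-1}$, which completes $\left(P_2\right)$. This part of the argument does not even use $B\neq 1$; that hypothesis is needed only for $p^2+q^2$.
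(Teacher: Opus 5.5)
Your proof is correct. It starts where the paper does, with $p^2+q^2=\frac{T+M}{2}=4^{h-1}B$, $r^2+s^2=\frac{T-M}{2}=A+4^{h-1}B-2^hX$, and $B\neq 1$ for the first half of $\left(P_2\right)$. The two substantive steps, however, go differently.

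For $\left(P_1\right)$, the paper quotes \Cref{r4}: $\left(Q\right)$ with $Y\neq 0$ gives $T>\left|M\right|$, hence $T\pm M>0$. Your AM--GM bound $A+4^{h-1}B\ge 2^h\sqrt{X^2+Y^2}>2^h\left|X\right|$ reaches the same inequality $T-M>0$ directly from $AB=X^2+Y^2$, without passing through $\left(Q\right)$.

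For excluding $r^2+s^2=4^{h-1}$, the paper uses a bare congruence on $T-M=2A+2^{2h-1}B-2^{h+1}X$. This quantity is a multiple of $4$ when $h=1$ and is $\equiv 2A\ \left[8\right]$ when $h\ge 2$, so it never equals $2^{2h-1}$. Equivalently, $r^2+s^2$ is even when $h=1$ and odd when $h\ge 2$, which uses only that $A$ and $B$ are odd. Your route (multiply by $B$, substitute $AB=X^2+Y^2$, complete the square, then argue $2$-adically on $4^{h-1}\left(B-u^2\right)=Y^2$) is valid. It is longer than needed, though, since the parity of $r^2+s^2$ already settles the question.

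What your computation does expose is the norm relation $\left(p^2+q^2\right)\left(r^2+s^2\right)=4^{h-1}\left[\left(X-2^{h-1}B\right)^2+Y^2\right]$. This is just $\left|L+2^hiY\right|^2=4\left|p+iq\right|^2\left|r+is\right|^2$, and it is a useful structural fact in its own right.

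One small point: $AB>0$ alone does not give $A,B>0$. Positivity is the paper's standing convention, and in any case it follows from $4^{h-1}B=p^2+q^2\ge 0$.
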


\paragraph{\textnormal{\underbar{\textit{Proof}}}}{:}
From \Cref{r4}, we have $T>{\mathrm{max} \left(\left|M\right|,\left|L\right|,2^hY\right)\ }$ from which we deduce that $p^2+q^2=\frac{1}{2}\left(T+M\right)>0$ and $r^2+s^2=\frac{1}{2}\left(T-M\right)>0$.
\\[10pt]
\noindent The second property is equivalent to stating that $T+M$ and $T-M$ are both distinct from $2^{2h-1}$. By hypothesis, $T+M=2^{2h-1}B$ cannot be equal to $2^{2h-1}$ since $B\neq 1$. On the other hand:

\begin{enumerate}[label=$\bullet$]
\item  if $h=1$, then $2^{2h-1}=2~$ and $T-M=2\left(A+B-2X\right)$ is a multiple of 4 because $A+B-2X$ is even.

\item  if $h\ge 2$, then $2^{2h-1}$ is a multiple of 8 whereas $T-M=2A+2^{2h-1}B-2^{h+1}X\equiv 2A\ \left[8\right]$.
\end{enumerate}

\noindent In all cases, we conclude that $T-M\neq 2^{2h-1}$. $\blacksquare $

\begin{proposition}{:}
\label{p7}
Under the assumptions of \Cref{p6}, we have:
\begin{center}
$2N+1=2^{-h}X=4^{-h}\left(T+M-L\right)=4^{-h}\left(2A-T+M+L\right).$
\end{center}
\noindent In particular, $A=T-L={\left(p-r\right)}^2+{\left(q+s\right)}^2$
\\[10pt]
\noindent The proof is left to the reader.
\end{proposition}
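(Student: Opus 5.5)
The proof is a direct computation from the explicit formulas for $(T,M,L)$ in \Cref{p5}, followed by substitution of the parametrization $\left(P_0\right)$ of \Cref{p6}. No arithmetic input beyond these is needed. In particular, the hypotheses $\gcd(A,B,X,Y)=1$ and the normalizations $p\ge 0$, $\nu_2(p)\ge\nu_2(q)$ are used only to guarantee that a quadruple $(p,q,r,s)$ satisfying $\left(P_0\right)$ exists, via \Cref{r4} and the subsequent sign adjustments.

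First, the equality $2N+1=2^{-h}X$ is just the definition $X=2^h(2N+1)$ underlying $\mathcal{Q}(h,N,Y)$. Next, I will expand $T+M-L$ from the definitions $T=A+2^{2h-1}B-2^hX$, $M=2^hX-A$, $L=2^{2h-1}B-2^hX$. The terms $A$ and $2^{2h-1}B$ cancel, leaving $T+M-L=2^hX$. Dividing by $4^h$ gives $4^{-h}(T+M-L)=2^{-h}X=2N+1$.

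Similarly, in $2A-T+M+L$ the $A$ terms give $2A-A-A=0$, the $2^{2h-1}B$ terms cancel, and the $X$ terms contribute $2^hX+2^hX-2^hX=2^hX$. Hence $2A-T+M+L=2^hX$ as well, which gives the last expression. Equivalently, subtracting the two expressions shows that they agree exactly when $2A=2(T-L)$, that is, when $A=T-L$. This is the ``in particular'' statement, and it is also immediate directly: $T-L=A+2^{2h-1}B-2^hX-2^{2h-1}B+2^hX=A$.

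Finally, I will substitute $\left(P_0\right)$ into $A=T-L$:
\[A=p^2+q^2+r^2+s^2-2(pr-qs)=(p^2-2pr+r^2)+(q^2+2qs+s^2)=(p-r)^2+(q+s)^2.\]
This last step depends on the sign choices already fixed after \Cref{r4}: $M=p^2+q^2-r^2-s^2$ and $L=2(pr-qs)$ rather than their absolute values. I expect this to be the only delicate point, so I will check that $\left(P_0\right)$ imposes these signed equalities, as it does, so that the completion of squares is legitimate.
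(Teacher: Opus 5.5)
Your proof is correct. Directly from the definitions in \Cref{p5} you get $T+M-L=2A-T+M+L=2^hX$ and $T-L=A$, and substituting the signed equalities of $\left(P_0\right)$ gives $A=(p-r)^2+(q+s)^2$; the paper leaves this proof to the reader, and your direct computation is exactly the intended verification.
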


\noindent 
\subsubsection{Classification of quadruples in the non-trivial case}

\noindent In this section, we keep working under the assumptions $\left(P_0\right)$ of \Cref{p6}.
\\[10pt]
\noindent We set $L'\coloneqq 2^{-\left(2h-1\right)}L$ i.e. $L'=B-2\left(2N+1\right)$. \Cref{p7} yields:
\[ \begin{array}{c}
2N+1=4^{-h}\left(T+M-L\right)=2^{-\left(2h-1\right)}\left(p^2+q^2\right)-\frac{L'}{2}.\ \ \ \ \ \ \left(P_3\right) \end{array}
\] 
Since $p^2+q^2=\frac{T+M}{2}=4^{h-1}B~$:

\begin{enumerate}[label=$\bullet$]
\item  when $h=1$, $p+q\equiv 1\ \left[2\right]$, and since ${\nu }_2\left(p\right)\ge {\nu }_2\left(q\right)$, $p$ is even and $q$ is odd;

\item  otherwise $4\ |\ p^2+q^2$, thus both $p$ and $q$ are even.
\end{enumerate}

\noindent Furthermore, given that $T=p^2+q^2+r^2+s^2$ is odd, the quadruple $\left(p,q,r,s\right)$ has one or three odd terms.
\\[10pt]
\noindent Finally, since $ps-qr=2^{h-1}Y~$:

\begin{enumerate}[label=$\bullet$]
\item  if $h=1$, $qr\equiv 1\ \left[2\right]$ which implies that $p$ is even and $q,s,r$ are odd,

\item  otherwise, only $s$ or $r$ is odd.
\end{enumerate}

\noindent 
\paragraph{\textit{\underline{1. Three odd terms}}}
\mbox{}\\
\mbox{}\\
\noindent We assume that $h=1$ in this section. Thus, $p$ is even and $q,r,s$ are odd.

\begin{proposition}{:}
\label{p8}
Set $u=q+s\in 2\mathbb{Z}$. We have:
\begin{center}
$2N+1=\frac{1}{2}\left(p\left(p\mathrm{-}r\right)\mathrm{+}qu\right)$
\end{center}
Either $1=\nu_2\left(p\right)<\nu_2\left(u\right)$ or $1=\nu_2\left(u\right)<\nu_2\left(p\right)$. 
\\[10pt]
Furthermore, if $X^2+Y^2\in E^*_{Y^2}$ and $A\neq 1$ (respectively $B\neq 1$) then $u\neq 0$ (respectively $p\neq 0$).
\end{proposition}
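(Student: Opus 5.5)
The plan is to specialise everything to $h=1$, express $2N+1$ through the quadruple using \Cref{p7}, and then read off the parity and divisibility information. Throughout, I work under $\left(P_0\right)$ with $h=1$, so that $p$ is even and $q,r,s$ are odd, as established just before the statement.

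\emph{Step 1: the formula for $2N+1$.} By \Cref{p7}, $2N+1=4^{-1}\left(T+M-L\right)$. From $\left(P_0\right)$ we have $T+M=2\left(p^2+q^2\right)$ and $L=2\left(pr-qs\right)$. Hence
\[2N+1=\tfrac{1}{2}\left(p^2+q^2-pr+qs\right)=\tfrac{1}{2}\left(p\left(p-r\right)+q\left(q+s\right)\right)=\tfrac{1}{2}\left(p\left(p-r\right)+qu\right),\]
which is the claimed formula. Since $q$ and $s$ are odd, $u=q+s$ is even.

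\emph{Step 2: the $2$-adic dichotomy.} Because $2N+1$ is odd, Step 1 gives $p\left(p-r\right)+qu\equiv 2\ \left[4\right]$. Since $p$ is even and $r$ is odd, $p-r$ is odd, so $\nu_2\left(p\left(p-r\right)\right)=\nu_2\left(p\right)$. Since $q$ is odd, $\nu_2\left(qu\right)=\nu_2\left(u\right)$; here I use the convention $\nu_2\left(0\right)=+\infty$, which covers the case $u=0$ or $p=0$. Both summands are even, and a sum of two even numbers is $\equiv 2\ \left[4\right]$ exactly when one summand has valuation $1$ and the other has valuation $\ge 2$. Both valuations equal to $1$ would give a sum $\equiv 0\ \left[4\right]$, and so would both valuations $\ge 2$. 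This yields precisely $1=\nu_2\left(p\right)<\nu_2\left(u\right)$ or $1=\nu_2\left(u\right)<\nu_2\left(p\right)$.

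\emph{Step 3: non-vanishing in the non-trivial case.} This is the only step requiring an idea. The key is to combine the expressions for $A$ and $B$ with $Y=ps+qr$, which is the last equation of $\left(P_0\right)$ with $h=1$.

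Suppose first that $u=0$, so $s=-q$. By \Cref{p7}, $A=\left(p-r\right)^2+u^2=\left(p-r\right)^2$. Moreover $Y=ps+qr=-q\left(p-r\right)$, so $\left|p-r\right|$ divides $Y$. It also divides $A$, and hence divides $X^2+Y^2=AB$. Since $X^2+Y^2\in E^*_{Y^2}$ means $\gcd\left(X^2+Y^2,Y^2\right)=1$, we get $\left|p-r\right|=1$, i.e. $A=1$. This contradicts the hypothesis $A\neq 1$.

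Suppose next that $p=0$. Since $h=1$, we have $B=p^2+q^2=q^2$, and $Y=ps+qr=qr$, so $q\mid Y$ and $q\mid X^2+Y^2$. Coprimality again forces $\left|q\right|=1$, i.e. $B=1$, contradicting $B\neq 1$.

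The main point to check carefully is the sign convention for the last equation of $\left(P_0\right)$: the text just before the statement writes $ps-qr=2^{h-1}Y$, whereas $\left(P_0\right)$ itself gives $ps+qr$. Either sign leaves Step 3 intact. When $u=0$, $ps\pm qr$ is $\pm q\left(p\mp r\right)$ up to sign, and $A=\left(p-r\right)^2$ in either case. When $p=0$, $ps\pm qr=\pm qr$. In both cases the divisibility argument goes through unchanged.
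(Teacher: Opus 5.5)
Your proof is correct and follows the paper's argument essentially step for step. The formula comes from $\left(P_3\right)$ (equivalently \Cref{p7}), the dichotomy from the same mod-$4$ parity count, and the non-vanishing from the same coprimality contradictions; the only variation is that for $u=0$ you get the common divisor $p-r$ of $A$ from $Y=-q\left(p-r\right)$, whereas the paper uses $X=p\left(p-r\right)$ together with $\gcd\left(X,AB\right)=1$.

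One correction to your closing aside: with the misprinted sign $ps-qr$, the case $u=0$ would give $Y=-q\left(p+r\right)$, so your $Y$-based argument would not go through unchanged (the paper's $X$-based one would). Since $\left(P_0\right)$, inherited from \Cref{t2}, correctly has $ps+qr$, your proof itself is unaffected.
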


\paragraph{\textnormal{\underbar{\textit{Proof}}}}{:}
$L=2\left(pr-qs\right)$ implies that $L'=\frac{L}{2}=pr-qs$. $\left(P_3\right)$ yields:
\[2N+1=\frac{1}{2}\left(p^2+q^2-pr+qs\right)=\frac{1}{2}\left(p\left(p-r\right)+qu\right)\] 
Since $2N+1,p-r,q$ are odd and $p,u$ are even, necessarily $\frac{p}{2}$ and $\frac{u}{2}$ have opposite parity, which is equivalent to saying that $1=\nu_2\left(p\right)<\nu_2\left(u\right)$ or $1=\nu_2\left(u\right)<\nu_2\left(p\right)$.
\\[10pt]
\noindent Suppose $u=0$ i.e. $q=-s$. Then, it follows from $\left(P_0\right)$ that $A=T-L={\left(p-r\right)}^2$. If $X$ is coprime to $Y$, it is coprime to $X^2+Y^2=AB$, and thus coprime to $A$. Since $X=p\left(p-r\right)$ it follows that $p-r=\pm 1$, so $u=0$ implies $A=1$.
\\[10pt]
\noindent Finally, suppose $p=0$. $\left(P_0\right)$ yields $B=q^2$ and $Y=qr$. If $Y$ is coprime to $X$, as previously it is coprime to $B$ and $B$ must be equal to $1$. $\blacksquare $

\begin{proposition}{:}
\label{p9}
Set $a=\frac{pu}{4}\ and$ $b=\frac{qu}{2}$ and assume $u\neq 0$ (which is always true when $A\neq 1$). Then $b$ is non-zero and divides $a\left(4a-Y\right)$, $\frac{a\left(4a-Y\right)}{b}$ and $b$ have opposite parity and we can write:
\begin{center}
$\begin{array}{c}
2N+1=\frac{a\left(4a-Y\right)}{b}+b.\ \ \ \left(P_4\right) \end{array}$
\end{center}
If $A,B\neq 1$ we also have $a\neq 0$.
\end{proposition}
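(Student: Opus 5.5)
The plan is a direct algebraic computation in the setting of \Cref{p8}: $h=1$, $p$ even, $q,r,s$ odd, and $u=q+s\in 2\mathbb{Z}$. With $h=1$, the last relation of $\left(P_0\right)$ reads $Y=ps+qr$, and \Cref{p8} gives $2N+1=\frac{1}{2}\left(p\left(p-r\right)+qu\right)$. The whole proof consists of expressing $4a-Y$ in terms of $p,q,r,s$ and then substituting.

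First, check that $a$ and $b$ are integers. By \Cref{p8}, one of $\nu_2\left(p\right),\nu_2\left(u\right)$ equals $1$ and the other is at least $2$. Hence $\nu_2\left(pu\right)\ge 3$, so $a=\frac{pu}{4}\in\mathbb{Z}$ (and in fact $a$ is even). Also $b=\frac{qu}{2}\in\mathbb{Z}$ since $u$ is even. Moreover $b\neq 0$ as soon as $u\neq 0$, because $q$ is odd. The parenthetical claim that $u\neq 0$ whenever $A\neq 1$ is exactly the last part of \Cref{p8}, under its hypothesis $X^2+Y^2\in E^*_{Y^2}$.

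The key identity is
\[4a-Y=pu-ps-qr=p\left(q+s\right)-ps-qr=q\left(p-r\right).\]
It follows that $a\left(4a-Y\right)=\frac{pu}{4}\,q\left(p-r\right)=b\cdot\frac{p\left(p-r\right)}{2}$. Since $p$ is even, $\frac{p\left(p-r\right)}{2}$ is an integer, so $b$ divides $a\left(4a-Y\right)$ and
\[\frac{a\left(4a-Y\right)}{b}=\frac{p\left(p-r\right)}{2}.\]
Adding $b=\frac{qu}{2}$ gives $\frac{1}{2}\left(p\left(p-r\right)+qu\right)=2N+1$ by \Cref{p8}, which is $\left(P_4\right)$.

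The two summands in $\left(P_4\right)$ have opposite parity because their sum $2N+1$ is odd. Equivalently, $\frac{p\left(p-r\right)}{2}\equiv \frac{p}{2}$ and $\frac{qu}{2}\equiv\frac{u}{2}$ modulo $2$, and these have opposite parity by \Cref{p8}.

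For the final claim, assume $A,B\neq 1$. Then \Cref{p8} gives both $u\neq 0$ and $p\neq 0$, hence $a=\frac{pu}{4}\neq 0$.

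The only step needing care is the standing hypothesis behind the nonvanishing claims. \Cref{p8} derives $u\neq 0$ and $p\neq 0$ only for non-trivial elements, i.e.\ assuming $\gcd\left(X,Y\right)=1$. I will state explicitly that this hypothesis is inherited from \Cref{p8}. Everything else is the single identity $4a-Y=q\left(p-r\right)$.
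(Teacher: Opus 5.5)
Your proposal is correct and follows the paper's proof. The paper's computation $\frac{a(4a-Y)}{b}=\frac{p(pu-Y)}{2q}=\frac{p(pq-qr)}{2q}=\frac{p(p-r)}{2}$ is exactly your identity $4a-Y=q(p-r)$, after which the paper invokes \Cref{p8} to get $(P_4)$, uses the oddness of $2N+1$ for the opposite parities, and uses $p,u\neq 0$ for $a\neq 0$; your explicit checks that $a,b\in\mathbb{Z}$ and that the nonvanishing claims inherit the hypothesis $X^2+Y^2\in E^*_{Y^2}$ from \Cref{p8} are details the paper leaves implicit.
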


\paragraph{\textnormal{\underbar{\textit{Proof}}}}{:}
It is clear that $u\neq 0$ implies $b\neq 0$. We expand:
\[\frac{a\left(4a-Y\right)}{b}+b=\frac{p\left(pu-Y\right)}{2q}+\frac{qu}{2}=\frac{p\left(pq-qr\right)}{2q}+\frac{qu}{2}=\frac{1}{2}\left(p\left(p\mathrm{-}r\right)\mathrm{+}qu\right).\] 
\Cref{p8} then yields $\left(P_4\right)$. Since $2N+1$ is odd, $\frac{a\left(4a-Y\right)}{b}$ and $b$ must have opposite parities. Moreover, if $A,B\neq 1$ then we have $p,u\neq 0$ hence $a\neq 0$.\textit{ }$\blacksquare $

\begin{remark}{:}
There are two possible cases, either $\nu_2\left(a\right)=\nu_2\left(b\right)$, i.e. $\frac{p}{2}$ is odd and $\frac{u}{2}$ even, or $a,\frac{p}{2}$ even and $b,\frac{u}{2}$ odd. Since the product of $\frac{a\left(4a-Y\right)}{b}$ and $b$ is always even, we deduce that $a$ is always even. We can thus write: 
\begin{center}
$ \begin{array}{c}
2N+1=\frac{2\hat{a}\left(8\hat{a}-Y\right)}{\hat{b}}+\hat{b}\ \ \ \ \left(N_1\right) \end{array}
\ $ 
\end{center}
with $\hat{a}=\frac{1}{2}a$ and $\hat{b}=b$ both non-zero integers if $A,B\neq 1$.
\end{remark}

\noindent 
\paragraph{\textit{\underline{2. Three even terms}}}
\mbox{}\\
\mbox{}\\
\noindent We now assume $h>1$.

\begin{proposition}{:}
\label{p10}
If $h>1$, $r$ is odd and $\nu_2\left(q\right)=h-1<\nu_2\left(p\right)$. Furthermore, if $X^2+Y^2\in E^*_{Y^2}$ and $A\neq 1$ (resp $B\neq 1$) then $u\neq 0$ (resp $p\neq 0$).
\end{proposition}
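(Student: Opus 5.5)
The plan is to get the 2-adic statements from valuations alone, using the relations collected just before \Cref{p8}, and then to argue the non-degeneracy claims exactly as in the proof of \Cref{p8}. Here $u=q+s$ as in \Cref{p8}. The inputs are $p^2+q^2=\frac{T+M}{2}=4^{h-1}B$ with $B$ odd, and $ps+qr=2^{h-1}Y$ with $Y$ odd. The first step is to show $\nu_2(p)\neq\nu_2(q)$. If both valuations were equal to some $m$, write $p=2^mp'$ and $q=2^mq'$ with $p',q'$ odd. Then $p'^2+q'^2\equiv 2\ [4]$, so $\nu_2(p^2+q^2)=2m+1$ would be odd, whereas $\nu_2(4^{h-1}B)=2h-2$ is even. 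Hence the valuations differ, and $\nu_2(p^2+q^2)=2\min(\nu_2(p),\nu_2(q))$. Since $\nu_2(p)\ge\nu_2(q)$ by $(P_0)$, we get $\nu_2(q)=h-1<\nu_2(p)$ (with the convention $\nu_2(0)=\infty$ if $p=0$).

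Next I will deduce that $r$ is odd from $ps+qr=2^{h-1}Y$. We have $\nu_2(ps)\ge\nu_2(p)\ge h$, while $\nu_2(qr)=h-1+\nu_2(r)$. If $r$ were even, both terms would be divisible by $2^h$, contradicting the fact that $Y$ is odd. So $\nu_2(r)=0$. (This also matches the earlier remark that, for $h>1$, exactly one of $r,s$ is odd, so $s$ is even.)

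For the non-degeneracy claims I first note that $X^2+Y^2\in E^*_{Y^2}$ gives $\gcd(X^2+Y^2,Y)=1$. Hence $\gcd(X,Y)=1$, and $X$ and $Y$ are each coprime to $AB$.
\begin{enumerate}[label=$\bullet$]
\item Case $u=0$, i.e. $s=-q$. By \Cref{p7}, $A=(p-r)^2$. Also
\[T+M-L=2\left(p^2+q^2-pr+qs\right)=2p(p-r),\]
so by \Cref{p7} $X=2^h(2N+1)=2^{1-h}p(p-r)$, that is, $2^{h-1}X=p(p-r)$. Now $p-r$ is odd because $p$ is even and $r$ is odd, so $p-r$ divides $X$. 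Since $\gcd(X,A)=1$ and $A=(p-r)^2$, this forces $p-r=\pm1$ and $A=1$.
\item Case $p=0$. Then $q^2=4^{h-1}B$ and $qr=2^{h-1}Y$. Writing $q=2^{h-1}q'$ gives $B=q'^2$ and $q'\mid Y$. As $\gcd(Y,B)=1$, we get $q'=\pm1$ and $B=1$.
\end{enumerate}

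The only delicate points are getting the parity contradiction right in the valuation step, and checking the power of $2$ in the formula $2^{h-1}X=p(p-r)$. I expect the latter to be the main source of error, so I would recompute it carefully from $(P_3)$ with $L'=pr-qs$ divided by $4^{h-1}$.
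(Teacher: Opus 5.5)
Your proof is correct and follows essentially the same route as the paper: the $2$-adic valuation argument on $p^2+q^2=4^{h-1}B$ with $B$ odd, the parity of $r$ from $ps+qr=2^{h-1}Y$, and coprimality arguments in the two degenerate cases. The differences are cosmetic. For $u=0$ you use $\gcd(A,X)=1$ via $2^{h-1}X=p(p-r)$, where the paper uses $\gcd(A,M)=1$ with $M=p^2-r^2$. For $p=0$ you read off $B=q'^2$ with $q'\mid Y$ directly, instead of going through the expression of $2N+1$ in terms of $q'$ and $s'$ and then invoking $(P_2)$; your version of this last step is slightly cleaner.
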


\paragraph{\textnormal{\underbar{\textit{Proof}}}}{:}
Since $h>1$, $p$ and $q$ are even and $2N+1=\frac{1}{2}\left(\frac{p^2+q^2}{4^{h-1}}-L'\right)$, hence $\frac{p^2+q^2}{4^{h-1}}$ is odd. Let $\alpha$ be the largest integer such that $2^\alpha$ divides $p$ and $q$. $\frac{p^2+q^2}{4^{h-1}}$ being odd implies that $\alpha\le h-1$. But $\frac{p^2+q^2}{4^\alpha}$ is congruent to 1 or 2 modulo 4, so we must have $\alpha =h-1$. Hence $2^{h-1}$ divides $p$ and $q$, and $\nu_2\left(q\right)=h-1<\nu_2\left(p\right)$.
\\[10pt]
\noindent Let $q'\coloneqq \frac{q}{2^{h-1}}$, $q'$ is odd and $Y=q'r+\frac{p}{2^{h-1}}s\equiv r\ \left[2\right]$, thus $r$ is also odd.
\\[10pt]
\noindent If $u=0$ and $X^2+Y^2\in E^*_{Y^2}$, as in the proof of \Cref{p8}, we have $A=T-L={\left(p-r\right)}^2$ and $M=2^hX-A=p^2-r^2$, and $A,M$ are coprime, thus $p-r=\pm 1$  and $A=1$. By contrapositive, if $X^2+Y^2\in E^*_{Y^2}$ and $A\neq 1$, then $u\neq 0$.
\\[10pt]
\noindent Finally, if $p=0$ and $X^2+Y^2\in E^*_{Y^2}$, we have $Y=q'r$ and $4^{h-1}L'=qs$. $s=2^{h-1}s'$ yields $L'=q's'$. $\left(P_3\right)$ implies:
\begin{center}
$2N+1=2^{-\left(2h-1\right)}\left(p^2+q^2\right)-\frac{L'}{2}=\frac{1}{2}q'\left(q'-s'\right).$
\end{center}
\noindent $\mathrm{gcd}\left(2N+1,Y\right)=1$ implies $q'=\pm 1$ and $\left(P_2\right)$ from \Cref{p6} yields $B=1$. 
\\[10pt]
\noindent We conclude again by contrapositive that if $X^2+Y^2\in E^*_{Y^2}$ and $B\neq 1$, then $p\neq 0$. $\blacksquare $

\begin{notation}{:}
Set $u=q+s$ as in previous section. \Cref{p10} shows that when $h>1$, $u$ is even.
\end{notation}

\begin{proposition}{:}
\label{p11}
Set $a=\frac{pu}{2^{h+1}}$ and $b=\frac{qu}{2^h}$. They are integers and satisfy:
\begin{center}
$ \begin{array}{c}
2N+1=2^{-(h-1)}\left[\frac{a\left(4a-Y\right)}{b}+b\right] \ \ \ \  \left(P_4\right) \end{array}
$
\end{center}
Furthermore, $A,B\neq 1$ implies $a,b\neq 0$.
\end{proposition}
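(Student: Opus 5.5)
The plan is to derive $(P_4)$ by a direct algebraic computation from the relations $(P_0)$ and $(P_3)$, in the same way as the proof of \Cref{p9}, with the extra powers of $2$ tracked using \Cref{p10}. The proof has three steps: integrality of $a$ and $b$, the identity itself, and the non-vanishing statements.

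\textbf{Integrality.} By \Cref{p10}, when $h>1$ we have $\nu_2\left(q\right)=h-1<\nu_2\left(p\right)$, so $\nu_2\left(p\right)\ge h$. By the Notation following \Cref{p10}, $u=q+s$ is even.
\begin{itemize}
\item Hence $\nu_2\left(pu\right)\ge h+1$, so $a=\frac{pu}{2^{h+1}}\in\mathbb{Z}$ (trivially so if $p=0$).
\item Likewise $\nu_2\left(qu\right)\ge (h-1)+1=h$, so $b=\frac{qu}{2^h}\in\mathbb{Z}$.
\end{itemize}
Since $\nu_2\left(q\right)=h-1$ is finite, $q\neq 0$, and therefore $b\neq 0$ exactly when $u\neq 0$.

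\textbf{The identity.} Assume $u\neq 0$ so that the quotient makes sense. From $2^hY=2\left(ps+qr\right)$ we get $Y=\frac{ps+qr}{2^{h-1}}$. Then
\[4a-Y=\frac{pu-ps-qr}{2^{h-1}}=\frac{q\left(p-r\right)}{2^{h-1}}.\]
The factor $qu$ cancels in the quotient, which gives
\[\frac{a\left(4a-Y\right)}{b}=\frac{p\left(p-r\right)}{2^h}.\]
Adding $b=\frac{qu}{2^h}$ and multiplying by $2^{-(h-1)}$ gives
\[2^{-(h-1)}\left[\frac{a\left(4a-Y\right)}{b}+b\right]=\frac{p^2+q^2-pr+qs}{2^{2h-1}}.\]
On the other side, $(P_3)$ gives $2N+1=\frac{p^2+q^2}{2^{2h-1}}-\frac{L'}{2}$. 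With $L'=2^{-(2h-1)}L=\frac{2\left(pr-qs\right)}{2^{2h-1}}$ we have $\frac{L'}{2}=\frac{pr-qs}{2^{2h-1}}$. The two expressions therefore agree, which proves $(P_4)$.

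\textbf{Non-vanishing.} This follows from \Cref{p10}, in the non-trivial setting $X^2+Y^2\in E^*_{Y^2}$ used there.
\begin{itemize}
\item $A\neq 1$ gives $u\neq 0$, hence $b\neq 0$.
\item $B\neq 1$ gives $p\neq 0$; together with $u\neq 0$ this yields $a\neq 0$.
\end{itemize}

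The only delicate point is the $2$-adic bookkeeping: confirming that the exponents $h+1$ and $h$ are exactly what \Cref{p10} guarantees, and that the division by $b$ is legitimate because $u\neq 0$. The remaining algebra is routine.
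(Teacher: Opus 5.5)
Your proposal is correct and follows essentially the same route as the paper. It is a direct algebraic verification of $(P_4)$ from $(P_3)$ and the relation $2^hY=2(ps+qr)$, with the non-vanishing of $a,b$ taken from \Cref{p10}. The paper rewrites $p^2-pr$ as $\frac{p^2(q+s)-p(ps+qr)}{q}$, while you simplify $4a-Y=\frac{q(p-r)}{2^{h-1}}$; this is the same computation read in the other direction. Your $2$-adic check that $a,b\in\mathbb{Z}$, using $\nu_2(q)=h-1<\nu_2(p)$ and $u$ even, is a welcome addition, since the paper asserts integrality without proof.
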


\paragraph{\textnormal{\underbar{\textit{Proof}}}}{:}
If $u\neq 0$, it is clear that b is non-zero. If $A,B\neq 1$, \Cref{p10} ensures that $a,b\neq 0$.
\\[10pt]
\noindent Furthermore, $\left(P_3\right)$ yields:
\begin{align*}
2N+1
&=2^{-\left(2h-1\right)}\left[\left(p^2+q^2\right)-pr+qs\right] \\
&=2^{-\left(2h-1\right)}\left[\frac{p^2\left(q+s\right)-p\left(ps+qr\right)}{q}+q\left(q+s\right)\right] \\ 
&=2^{-\left(2h-1\right)}\left[\frac{{\left(pu\right)}^2-pu.2^{h-1}Y}{qu}+qu\right] \\
&=2^{-\left(h-1\right)}\left[\frac{a\left(4a-Y\right)}{b}+b\right]
\end{align*}

\noindent This completes the proof. $\blacksquare $

\begin{note}{:}
\Cref{p11} generalizes \Cref{p9} and its equation $\left(P_4\right)$, which is why we have given it the same reference. In particular, it shows that b always divides $a\left(4a-Y\right)$.

\end{note}

\noindent \underbar{We distinguish three cases}:

\begin{enumerate}
\item  $\nu_2\left(p\right)=2\left(h-1\right)$

\item  $\nu_2\left(p\right)>2\left(h-1\right)$

\item  $h<\nu_2\left(p\right)<2\left(h-1\right)$
\end{enumerate}

\begin{corollary}{:}
\label{c3}
Suppose $\nu_2\left(p\right)=2\left(h-1\right)$. We have:
\[ \begin{array}{c}
2N+1=\frac{1}{2}\left(\frac{\tilde{a}\left(4^{h-1}\tilde{a}-Y\right)}{\tilde{b}}+\tilde{b}\right)\ \ \ \ \ \ \ \left(N_2\right) \end{array}
\] 
where $\tilde{a}=\frac{pu}{2^{3\left(h-1\right)}}$, $\tilde{b}\boldsymbol{=}\frac{qu}{2^{2\left(h-1\right)}}$ are both odd integers.
\end{corollary}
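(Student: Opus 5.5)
The plan is to work directly from the unsimplified expression obtained inside the proof of \Cref{p11}, rather than from its final form $\left(P_4\right)$, and to substitute $pu=2^{3(h-1)}\tilde{a}$ and $qu=2^{2(h-1)}\tilde{b}$. That proof gives
\[2N+1=2^{-(2h-1)}\left[\frac{(pu)^2-pu\cdot 2^{h-1}Y}{qu}+qu\right].\]
After the substitution, the three terms in the bracket become
\[\frac{(pu)^2}{qu}=2^{4(h-1)}\frac{\tilde{a}^2}{\tilde{b}},\qquad \frac{pu\cdot 2^{h-1}Y}{qu}=2^{2(h-1)}\frac{\tilde{a}Y}{\tilde{b}},\qquad qu=2^{2(h-1)}\tilde{b}.\]
So the whole bracket equals $2^{2(h-1)}\left[\frac{\tilde{a}(4^{h-1}\tilde{a}-Y)}{\tilde{b}}+\tilde{b}\right]$. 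Multiplying by $2^{-(2h-1)}$ leaves exactly the factor $\frac12$, which gives $\left(N_2\right)$. This step is a purely formal identity once $\tilde{a},\tilde{b}$ are defined.

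The real content is the claim that $\tilde{a}$ and $\tilde{b}$ are odd integers, which amounts to showing $\nu_2(pu)=3(h-1)$ and $\nu_2(qu)=2(h-1)$. Since we are in the case $h>1$, \Cref{p10} gives $\nu_2(q)=h-1$ and $r$ odd, and the hypothesis gives $\nu_2(p)=2(h-1)$. It therefore suffices to prove $\nu_2(u)=h-1$. This is the main step, and I would get it from the equation $L=2(pr-qs)$ in $\left(P_0\right)$.

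Indeed, $L=2^{2h-1}L'$ with $L'=B-2(2N+1)$ odd, because $B$ is odd. Hence
\[\frac{pr-qs}{4^{h-1}}=L'\]
is an odd integer. The term $\frac{pr}{4^{h-1}}$ is odd, since $\nu_2(p)=2(h-1)$ and $r$ is odd. Consequently $\frac{qs}{4^{h-1}}$ is an even integer, i.e. $\nu_2(qs)\ge 2h-1$, and so $\nu_2(s)\ge h>h-1=\nu_2(q)$. It follows that $\nu_2(u)=\nu_2(q+s)=\nu_2(q)=h-1$.

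Plugging this in gives $\nu_2(pu)=2(h-1)+(h-1)=3(h-1)$ and $\nu_2(qu)=(h-1)+(h-1)=2(h-1)$. Hence $\tilde{a}$ and $\tilde{b}$ are odd integers. In particular they are nonzero, so the division by $\tilde{b}$ is legitimate; this also follows independently from $u\neq 0$, which the oddness argument itself forces. The only subtlety I expect is checking that we really need $L'$ odd, not just $L'$ an integer, to rule out $\nu_2(s)=h-1$; as shown above, that parity is exactly what forces $\nu_2(s)\ge h$.
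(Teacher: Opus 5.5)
Your proof is correct. The derivation of $(N_2)$ is the same computation as in the paper: the paper substitutes $a=2^{2h-4}\tilde a$ and $b=2^{h-2}\tilde b$ into $(P_4)$, which is equivalent to your substitution into the unsimplified bracket from the proof of \Cref{p11}.

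The oddness part takes a genuinely different route. The paper never looks at $s$. It uses that $\frac{a(4a-Y)}{b}$ is an integer (from $(P_4)$), whose $2$-adic valuation is $\nu_2(p)-\nu_2(q)-1=h-2$ because $4a-Y$ is odd and $a/b=p/(2q)$. Hence $\frac{\tilde a(4^{h-1}\tilde a-Y)}{\tilde b}=\frac{a(4a-Y)}{2^{h-2}b}$ is an odd integer. Since the left side $2(2N+1)$ of $(N_2)$ is even, $\tilde b$ must be an odd integer. Only after that does the paper deduce $\nu_2(u)=h-1$ and the oddness of $\tilde a$.

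You instead obtain $\nu_2(u)=h-1$ first and directly. You combine the $L$-component of $(P_0)$, the parity of $L'=B-2(2N+1)$, and the facts $r$ odd and $\nu_2(q)=h-1$ from \Cref{p10}, which force $\nu_2(s)\ge h$.

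What your version buys: it is a self-contained $2$-adic computation that does not rely on the divisibility $b\mid a(4a-Y)$. It also proves $u\neq 0$ in this case as a byproduct. The paper's cancellation $a/b=p/(2q)$ and its division by $b$ tacitly presuppose $u\neq 0$, which it only has under $A\neq 1$ via \Cref{p10}.

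What the paper's version buys: it is shorter once \Cref{p11} is available, and it reads the parity off the structure of $(N_2)$ itself.

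One cosmetic point: your starting expression from the proof of \Cref{p11} already divides by $qu$. So present the valuation step, which gives $u\neq 0$, before the substitution rather than as an afterthought.
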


\paragraph{\textnormal{\underbar{\textit{Proof}}}}{:}
We have $a=2^{2h-4}\tilde{a}$ and $b=2^{h-2}\tilde{b}$. Thus 
\begin{center}
$2N+1=2^{-(h-1)}\left[\frac{a\left(4a-Y\right)}{b}+b\right]=2^{-(h-1)}\left[\frac{2^{2h-4}\tilde{a}\left(2^{2h-2}\tilde{a}-Y\right)}{2^{h-2}\tilde{b}}+2^{h-2}\tilde{b}\right]=\frac{1}{2}\left(\frac{\tilde{a}\left(4^{h-1}\tilde{a}-Y\right)}{\tilde{b}}+\tilde{b}\right)$.
\end{center}

\noindent Let us show that $\tilde{a},\tilde{b}$ are odd integers. We know that $\frac{a\left(4a-Y\right)}{b}$ is an integer and ${\nu }_2\left(\frac{a\left(4a-Y\right)}{b}\right)={\nu }_2\left(p\right)-{\nu }_2\left(q\right)-1=h-2$, thus $\frac{\tilde{a}\left(4^{h-1}\tilde{a}-Y\right)}{\tilde{b}}=\frac{a\left(4a-Y\right)}{2^{h-2}b}$ is an odd integer, and so is $\tilde{b}$ thanks to $\left(N_2\right)$. This implies that ${\nu }_2\left(u\right)=h-1$ and therefore that $\tilde{a}$ is also an odd integer. $\blacksquare $

\begin{corollary}{:}
\label{c4}
Suppose $\nu_2\left(p\right)>2\left(h-1\right)$. We have:
\[ \begin{array}{c}
2N+1=\frac{2\hat{a}\left(2^{2h+1}\hat{a}-Y\right)}{\hat{b}}+\hat{b}\ \ \ \ \ \ \ \ \left(N_3\right) \end{array}
\] 
with $\hat{a}=2^{-\left(2h-1\right)}a$ and $\hat{b}=2^{-(h-1)}b$ integers.
\end{corollary}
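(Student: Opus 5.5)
The identity $\left(N_3\right)$ should follow by substituting into $\left(P_4\right)$ of \Cref{p11}. The real content is showing that $\hat{a}$ and $\hat{b}$ are integers, and for that I would use a $2$-adic valuation argument on $\left(P_4\right)$, in the same spirit as the proof of \Cref{c3}. Throughout, $h>1$, and the hypothesis $\nu_2\left(p\right)>2\left(h-1\right)$ means $\nu_2\left(p\right)\ge 2h-1$.

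\textbf{The algebraic identity.} Set $a=2^{2h-1}\hat{a}$ and $b=2^{h-1}\hat{b}$, so that $4a=2^{2h+1}\hat{a}$. Then
\[2^{-(h-1)}\left[\frac{2^{2h-1}\hat{a}\left(2^{2h+1}\hat{a}-Y\right)}{2^{h-1}\hat{b}}+2^{h-1}\hat{b}\right]=2^{-(h-1)}\left[\frac{2^{h}\hat{a}\left(2^{2h+1}\hat{a}-Y\right)}{\hat{b}}+2^{h-1}\hat{b}\right],\]
which equals $\frac{2\hat{a}\left(2^{2h+1}\hat{a}-Y\right)}{\hat{b}}+\hat{b}$. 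This is exactly $\left(N_3\right)$.

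\textbf{Integrality: setting up the valuations.} By \Cref{p11} and the following note, $\frac{a(4a-Y)}{b}$ is an integer. Multiplying $\left(P_4\right)$ by $2^{h-1}$ gives
\[2^{h-1}\left(2N+1\right)=\frac{a\left(4a-Y\right)}{b}+b,\]
so the right-hand side has $2$-adic valuation exactly $h-1$. Since $Y$ is odd and $4a$ is even, $4a-Y$ is odd. Using $\nu_2\left(q\right)=h-1$ from \Cref{p10}, we get
\[\nu_2\left(\frac{a(4a-Y)}{b}\right)=\nu_2\left(a\right)-\nu_2\left(b\right)=\nu_2\left(p\right)-\nu_2\left(q\right)-1=\nu_2\left(p\right)-h\ge h-1.\]
Here the factors of $u$ cancel, and $u\ne 0$ in the non-trivial setting by \Cref{p10}. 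We also have $\nu_2\left(a\right)=\nu_2\left(p\right)+\nu_2\left(u\right)-h-1$ and $\nu_2\left(b\right)=\nu_2\left(u\right)-1$.

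\textbf{Integrality: the two subcases.} This valuation comparison is the main step, and it splits according to $\nu_2\left(p\right)$.
\begin{enumerate}
\item If $\nu_2\left(p\right)>2h-1$, the first summand has valuation $>h-1$. The sum has valuation exactly $h-1$, so $\nu_2\left(b\right)=h-1$, hence $\nu_2\left(u\right)=h$ and $\hat{b}$ is odd. Then $\nu_2\left(a\right)=\nu_2\left(p\right)-1\ge 2h-1$, so $\hat{a}$ is an integer.
\item If $\nu_2\left(p\right)=2h-1$, the first summand has valuation exactly $h-1$. For the sum to keep valuation $h-1$, we need $\nu_2\left(b\right)\ge h$, so $\nu_2\left(u\right)\ge h+1$. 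Then $\nu_2\left(a\right)\ge 2h-1$ and $\nu_2\left(b\right)\ge h-1$, so $\hat{a}$ and $\hat{b}$ are integers.
\end{enumerate}
In both subcases $\hat{a}$ and $\hat{b}$ are integers, which completes the argument.
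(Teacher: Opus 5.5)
Your proof is correct and follows essentially the same route as the paper. Both arguments obtain $(N_3)$ by substituting into $(P_4)$, then use $\nu_2(a/b)=\nu_2(p)-\nu_2(q)-1$ with $\nu_2(q)=h-1$, and let the parity of the left-hand side of $(P_4)$ force $\nu_2(u)$ high enough for $\hat a,\hat b$ to be integers; the only difference is that the paper gets $\hat b\in\mathbb{Z}$ by subtraction and $\hat a\in\mathbb{Z}$ from \emph{one of the two summands must be even}, whereas your explicit split into $\nu_2(p)>2h-1$ versus $\nu_2(p)=2h-1$ corresponds exactly to which summand of $(N_3)$ is even.
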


\paragraph{\textnormal{\underbar{\textit{Proof}}}}{:}
$\left(N_3\right)$ follows from $\left(P_4\right)$. Furthermore:
\begin{center}
$\nu_2\left(\frac{2\hat{a}\left(2^{2h+1}\hat{a}-Y\right)}{\hat{b}}\right)=\nu_2\left(\frac{a}{b}\right)-\left(h-1\right)=\nu_2\left(p\right)-\nu_2\left(q\right)-h\ge 0.$
\end{center}
We deduce that $\frac{2\hat{a}\left(2^{2h+1}\hat{a}-Y\right)}{\hat{b}}\in \mathbb{Z}$ from which it follows that $\hat{b}=2N+1-\frac{2\hat{a}\left(2^{2h+1}\hat{a}-Y\right)}{\hat{b}}\in \mathbb{Z}$. Thus $\nu_2\left(\hat{b}\right)\ge 0$ hence $\nu_2\left(u\right)\ge h$. Since $2N+1$ is odd we know that $\frac{2\hat{a}\left(2^{2h+1}\hat{a}-Y\right)}{\hat{b}}$ or $\hat{b}$ is even which implies that $2\hat{a}$ is even and $\hat{a}\in \mathbb{Z}$. $\blacksquare $

\begin{note}
$\left(N_3\right)$ is a generalization of $\left(N_1\right)$.
\end{note}

\begin{corollary}{:}
\label{c5}
Suppose $\nu_2\left(p\right)<2\left(h-1\right)$. We have:
\[ \begin{array}{c}
2N+1=2^{\nu_2\left(p\right)-\left(2h-1\right)}\left(\frac{\overline{a}\left(4^{\nu_2\left(p\right)-\left(h-1\right)}\overline{a}-Y\right)}{\overline{b}}+\overline{b}\right)\ \ \ \ \left(N_4\right) \end{array}
\] 
and $\overline{a}=4^{h-\nu_2\left(p\right)}a$, $\overline{b}\boldsymbol{=}2^{h-\nu_2\left(p\right)}b$ odd integers.
\end{corollary}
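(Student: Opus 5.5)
The plan is to derive $\left(N_4\right)$ directly from $\left(P_4\right)$ of \Cref{p11} by rescaling, exactly as in \Cref{c3} and \Cref{c4}. Then I will use a $2$-adic valuation argument to show that $\overline{a}$ and $\overline{b}$ are odd integers.

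\textbf{Setup.} Write $k\coloneqq\nu_2\left(p\right)$ and $m\coloneqq k-\left(h-1\right)$. By \Cref{p10}, $h-1=\nu_2\left(q\right)<k$, and by hypothesis $k<2\left(h-1\right)$, so $1\le m$ and $2h-1-k\ge 2$. We also work under the standing assumptions that make $a,b\neq 0$, i.e. $A,B\neq 1$, so that $p,u\neq 0$ by \Cref{p10}.

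\textbf{Step 1: the identity $\left(N_4\right)$.} Substitute $a=4^{-(h-k)}\overline{a}$ and $b=2^{-(h-k)}\overline{b}$ into $\left(P_4\right)$. Since $4^{h-k}\cdot 4^{m}=4$, we get $4a=4^{m}\overline{a}$. Hence
\[\frac{a\left(4a-Y\right)}{b}=2^{-(h-k)}\,\frac{\overline{a}\left(4^{m}\overline{a}-Y\right)}{\overline{b}}\quad\text{and}\quad b=2^{-(h-k)}\overline{b}.\]
Multiplying by $2^{-(h-1)}$ gives the prefactor $2^{k-(2h-1)}$, which is $\left(N_4\right)$. This step is a routine check.

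\textbf{Step 2: valuations of $\overline{a}$ and $\overline{b}$.} From $\overline{a}=pu\,2^{h-2k-1}$ and $\overline{b}=qu\,2^{-k}$ we get
\[\nu_2\left(\overline{a}\right)=\nu_2\left(\overline{b}\right)=\nu_2\left(u\right)-m\eqqcolon v.\]
Next, $u$ is even, so $\nu_2\left(4^m\overline{a}\right)=\nu_2\left(u\right)+m>0$. Since $Y$ is odd, $4^m\overline{a}-Y$ is therefore a $2$-adic unit, viewed as a rational number with power-of-two denominator. It follows that the fraction $\overline{a}\left(4^m\overline{a}-Y\right)/\overline{b}$ has $2$-adic valuation exactly $0$.

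\textbf{Step 3: forcing $v=0$.} I expect this to be the key step. Write $S$ for the bracket in $\left(N_4\right)$, so $S=\overline{a}\left(4^m\overline{a}-Y\right)/\overline{b}+\overline{b}$. Then $\left(N_4\right)$ reads $2N+1=2^{-(2h-1-k)}S$, which forces $\nu_2\left(S\right)=2h-1-k\ge 2$.
\begin{enumerate}[label=$\bullet$]
\item If $v>0$, then $\nu_2\left(S\right)=0$.
\item If $v<0$, then $\nu_2\left(S\right)=v<0$.
\end{enumerate}
Both contradict $\nu_2\left(S\right)\ge 2$, so $v=0$. Consequently $\overline{a}$ and $\overline{b}$ are rationals whose denominators are powers of $2$ and whose $2$-adic valuation is $0$. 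Hence they are odd integers, which proves the corollary.

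The main obstacle is simply the careful bookkeeping of negative exponents. For instance, $\overline{b}=2^{h-k}b$ involves dividing by $2^{k-h}$, so integrality is not automatic. This is why the argument has to run through valuations of rationals, as above, rather than through direct divisibility.
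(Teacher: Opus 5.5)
Your proof is correct and follows essentially the paper's route. You obtain $(N_4)$ from $(P_4)$ by substitution and note that $\nu_2(\overline{a})=\nu_2(\overline{b})$, so the fraction $\overline{a}(4^{\nu_2(p)-(h-1)}\overline{a}-Y)/\overline{b}$ is a $2$-adic unit; the fact that the bracket equals $2^{2h-1-\nu_2(p)}(2N+1)$, hence is even, then forces $\overline{b}$ and then $\overline{a}$ to be odd integers. Your explicit bookkeeping ($\nu_2(\overline{a})=\nu_2(\overline{b})=\nu_2(u)-m$, with $\overline{a},\overline{b}$ treated as dyadic rationals) is actually cleaner than the paper's: its valuation computation has a sign slip (the final $+1$ should be $-1$), and it leaves the integrality of the fraction implicit.
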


\paragraph{\textnormal{\underbar{\textit{Proof}}}}{:}
$\left(N_4\right)$ still follows from $\left(P_4\right)$. Furthermore,
\begin{center}
$\nu_2\left(\frac{\overline{a}}{\overline{b}}\right)=h-\nu_2\left(p\right)+\nu_2\left(p\right)-\nu_2\left(q\right)+1=0$
\end{center}
Thus, $\frac{\overline{a}\left(4^{\nu_2\left(p\right)-\left(h-1\right)}\overline{a}-Y\right)}{\overline{b}}$ is odd and so is $\overline{b}$. It follows that their product is odd, and finally that $\overline{a}$ is odd. $\blacksquare $

\noindent
\paragraph{\textit{\underline{3. Classification of non-trivial integers}}}
\mbox{}\\
\mbox{}\\
\noindent In the previous two sections, four equalities $\left(N_1\right) - \left(N_4\right)$ were derived by classifying $p$ into four cases. Here, we simplify them into two cases yielding $\left(V_1\right)$ and $\left(V_2\right)$ respectively.

\begin{proposition}{:}
\label{p12}
For any factorization $X^2+Y^2=\mathcal{Q}\left(h,N,Y\right)=AB$ of an element in $E^*_{Y^2}$ such that $A,B\neq 1$, and for any quadruple $\left(p,q,r,s\right)$ satisfying $\left(P_0\right)$, one of the following two cases holds:

\begin{enumerate}[label=\alph*)]
\item  If $\nu_2\left(p\right)>2\left(h-1\right)$, either $\left(N_1\right)$ or $\left(N_3\right)$ holds, i.e. there exist $\hat{a},\hat{b}\in {\mathbb{Z}}^*$ such that ${2\hat{a}\left(2\cdot 4^h\hat{a}-Y\right)}/{\hat{b}}$ and $\hat{b}$ have opposite parities and:
\[ \begin{array}{c}
2N+1=\left(\frac{2\hat{a}\left(2\cdot 4^h\hat{a}-Y\right)}{\hat{b}}\right)+\hat{b}\ \ \ \ \ \ \ \ \left(V_1\right) \end{array}
\] 

\item  Otherwise either $(N_2)$ or $\left(N_4\right)$ holds and there exists $\check{a},\check{b}\in 2\mathbb{Z}\mathrm{+}1$ and $1\le \check{h}\le h-1$ such that:
\[ \begin{array}{c}
2^{h-\check{h}}\left(2N+1\right)=\frac{\check{a}\left(4^{\check{h}}\check{a}-Y\right)}{\check{b}}+\check{b}\ \ \ \ \ \left(V_2\right) \end{array}
\] 
\end{enumerate}
\end{proposition}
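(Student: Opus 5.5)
The plan is a case split on $h$ and then on $\nu_2\left(p\right)$, so that each case reduces to one of the corollaries already proved. First, since $A,B\neq 1$ and $X^2+Y^2\in E^*_{Y^2}$, \Cref{p8} (for $h=1$) and \Cref{p10} (for $h>1$) give $u\neq 0$ and $p\neq 0$. Hence the integers $a,b$ of \Cref{p9} and \Cref{p11} are non-zero.

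\textbf{Case $h=1$.} Here $2\left(h-1\right)=0<1\le\nu_2\left(p\right)$, since $p$ is even, so we are always in case a). The remark following \Cref{p9} gives $\left(N_1\right)$ with $\hat{a}=a/2$ and $\hat{b}=b$, both non-zero. Since $2\cdot 4^h=8$ when $h=1$, $\left(N_1\right)$ is exactly $\left(V_1\right)$. The required opposite parity is the one established in \Cref{p9}, rewritten in terms of $\hat a$.

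\textbf{Case $h>1$ and $\nu_2\left(p\right)>2\left(h-1\right)$.} \Cref{c4} gives $\left(N_3\right)$ with $\hat{a}=2^{-(2h-1)}a$ and $\hat{b}=2^{-(h-1)}b$. Both are integers by that corollary, and both are non-zero because $a,b\neq 0$. Since $2^{2h+1}=2\cdot 4^h$, $\left(N_3\right)$ is literally $\left(V_1\right)$. Opposite parity follows because the sum of the two terms equals the odd number $2N+1$ and both terms are integers, which was shown in the proof of \Cref{c4}.

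\textbf{Case $h>1$ and $\nu_2\left(p\right)\le 2\left(h-1\right)$.} By \Cref{p10} we have $\nu_2\left(p\right)>h-1$, so $\nu_2\left(p\right)\ge h$.

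\emph{Subcase $\nu_2\left(p\right)=2\left(h-1\right)$.} Take $\check{h}=h-1$, $\check a=\tilde a$ and $\check b=\tilde b$, which are odd by \Cref{c3}. Multiplying $\left(N_2\right)$ by $2=2^{h-\check h}$ gives $\left(V_2\right)$, and $1\le\check h\le h-1$ holds.

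\emph{Subcase $h\le \nu_2\left(p\right)<2\left(h-1\right)$.} Set $\check{h}=\nu_2\left(p\right)-\left(h-1\right)$, so $1\le\check h\le h-2$. Take $\check a=\overline a$ and $\check b=\overline b$, which are odd by \Cref{c5}. The prefactor in $\left(N_4\right)$ is $2^{\nu_2(p)-(2h-1)}=2^{\check h-h}$, so multiplying by $2^{h-\check h}$ turns $\left(N_4\right)$ into $\left(V_2\right)$.

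The main obstacle is bookkeeping rather than ideas. I would recheck the exponents in \Cref{c5}, since the stated normalisations $\overline a=4^{h-\nu_2(p)}a$ and $\overline b=2^{h-\nu_2(p)}b$ must produce exactly $4^{\check h}$ inside the bracket. I would do this by substituting $a=4^{\nu_2(p)-h}\overline a$ into $4a$ and confirming that $4a=4^{\nu_2(p)-h+1}\overline a=4^{\check h}\overline a$. I would also confirm that the ranges of $\check h$ obtained in the two subcases together cover $1\le\check h\le h-1$.
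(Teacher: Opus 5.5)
Your proposal is correct and is the assembly the paper intends when it leaves this proof to the reader. You split on $h=1$ versus $h>1$ and then on $\nu_2(p)$, and read off $(V_1)$ or $(V_2)$ from the remark after \Cref{p9}, \Cref{c4}, \Cref{c3} and \Cref{c5}; your exponent bookkeeping checks out: $\check{h}=\nu_2(p)-(h-1)$, the prefactor $2^{\check{h}-h}$, and $4a=4^{\check{h}}\overline{a}$. Your lower bound $h\le\nu_2(p)$ in the last subcase is also the right one. The paper's list of cases writes $h<\nu_2(p)$, but $\nu_2(p)=h$ does occur and is covered by \Cref{c5} as stated; for example $X=8$, $Y=11$, $185=37\cdot 5$ has $h=3$, and \Cref{p15} gives $p=8$, so $\nu_2(p)=3=h$.
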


\noindent The proof follows largely from the preceding results and is left to the reader.

\begin{note}{:}
$\left(V_1\right)$ can also be viewed as a special case of $\left(V_2\right)$ by setting $\check{h}=h$ and $\check{a}=2\hat{a}$, and relaxing the condition that the variables must be odd.
\\[10pt]
\noindent Since $\check{b}$ divides $\check{a}\left(4^{\check{h}}\check{a}-Y\right)$, it can be factored as $\check{b}=b_1b_2$ with $b_1|\check{a}$ and $b_2|4^{\check{h}}\check{a}-Y$. This allows us to write $2N+1=a_1a_2+b_1b_2$ where $a_1=\frac{\check{a}}{b_1}$ and $a_2=\frac{4^{\check{h}}\check{a}-Y}{b_2}$.
\end{note}

\begin{note}{:}
If $h=1$, we always fall into the case $\left(V_1\right)$ as the condition $\nu_2\left(p\right)>2\left(h-1\right)$ is automatically satisfied. $\blacksquare $
\end{note}

\noindent 
\subsubsection{Extension to certain trivial integers}

\noindent In this section, we establish the following lemma, which extends \Cref{p12} to the case of trivial composite integers having a sufficient number of factors in ${\mathcal{P}}_{1,4}$:

\begin{lemma}{:}
\label{l1}
Trivial \underbar{composite integers} $n=X^2+Y^2\in E_{Y^2}\diagdown \left\{Y^2\right\}$ such that:
\begin{center}
$\sum_{p\in {\mathcal{P}}_{1,4}}{\nu_p\left(n\right)}\ge 2$
\end{center}
satisfy equation $\left(V_1\right)$ or $\left(V_2\right)$. That is, if we write $X=2^h\left(2N+1\right)$, either there exist $\hat{a},\hat{b}\in {\mathbb{Z}}^*$ such that:
\begin{center}
$ \begin{array}{c}
2N+1=\left(\frac{2\hat{a}\left(2\cdot 4^h\hat{a}-Y\right)}{\hat{b}}\right)+\hat{b}, \end{array} $
\end{center}
or there exists $\check{a},\check{b}\in 2\mathbb{Z}\mathrm{+}1$ and $1\le \check{h}\le h-1$ such that:
\[ \begin{array}{c}
2^{h-\check{h}}\left(2N+1\right)=\frac{\check{a}\left(4^{\check{h}}\check{a}-Y\right)}{\check{b}}+\check{b}. \end{array}
\] 
\end{lemma}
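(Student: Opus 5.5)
The idea is to reduce the trivial case to the non-trivial one already handled by \Cref{p12}. A trivial integer $n=X^2+Y^2\in E_{Y^2}$ has $d=\gcd(X,Y)>1$; since $Y$ is odd, $d$ is odd and every prime factor of $d$ divides $X^2+Y^2$ to an even power. Write $X=dX_0$, $Y=dY_0$ with $\gcd(X_0,Y_0)=1$. Then $\nu_2(X_0)=\nu_2(X)=h$, and $X_0=2^h(2N_0+1)$ with $2N+1=d(2N_0+1)$. Also $n_0=X_0^2+Y_0^2\in E^*_{Y_0^2}$.

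However, we do not want to pass to $n_0$ and rescale: the equations $(V_1)$, $(V_2)$ involve $Y$ itself, not $Y_0$, and scaling $\hat a,\hat b$ does not obviously turn $Y_0$ into $Y$. So the plan is to apply the machinery of \Cref{p5}--\Cref{p11} directly to $n$ with a well-chosen factorization $n=AB$.

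\textbf{Choice of factorization.} By hypothesis $n$ has at least two prime factors, counted with multiplicity, in $\mathcal{P}_{1,4}$. Not all of them can be absorbed by $d^2$ in a way that makes the Gaussian factorization degenerate. Using \Cref{c1} on $X+iY$, or Gaussian factorization directly when $n$ has factors in $\mathcal{P}_{3,4}$ (which then come from $d$), I will write $X+iY=z_Az_B$ with $A=|z_A|^2$ and $B=|z_B|^2$. The choice is made so that $\gcd(A,B,X,Y)=1$ and $A,B\neq 1$: put all of $d$'s contribution, say the $\mathcal{P}_{3,4}$ part and one copy of each conjugate pair, into one side, and put an irreducible factor $\pi\mid X_0+iY_0$ with $|\pi|^2\in\mathcal{P}_{1,4}$ into the other side. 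The hypothesis $\sum\nu_p\ge 2$ is exactly what guarantees that both sides can be made different from $1$ while keeping $\gcd(A,B,X,Y)=1$. The precise condition needed is only that no prime divides all four of $A,B,X,Y$. With this in place, \Cref{r4} and \Cref{t2} provide $(p,q,r,s)$ satisfying $(P_0)$.

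\textbf{Rerunning the derivation.} Next I check where the non-triviality hypothesis $X^2+Y^2\in E^*_{Y^2}$ was actually used. It enters only in showing $u\neq0$ and $p\neq0$, in \Cref{p8} and \Cref{p10}, through the implications $u=0\Rightarrow A=1$ and $p=0\Rightarrow B=1$. The identities $(P_3)$, $(P_4)$ and the parity and valuation analysis of Corollaries~\ref{c3}--\ref{c5} use only $(P_0)$ and $\gcd(A,B,X,Y)=1$. So I will redo these two degenerate cases directly:
\begin{itemize}
\item If $u=0$, then $A=(p-r)^2$ and $X=p(p-r)$ (or its $h>1$ analogue), so $A$ is a perfect square whose root divides $X$.
\item If $p=0$, then $B=q^2$ and $Y=qr$.
\end{itemize}
The aim is to choose the factorization so that these are excluded, for instance by ensuring that $A$ and $B$ are not perfect squares sharing a root with $X$ or $Y$. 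The flexibility from $\sum\nu_p\ge2$ should allow this: place a single $\mathcal{P}_{1,4}$ Gaussian prime not coming from $d$ on one side, so that side is not a square. Once $u,p\neq0$, \Cref{p12} applies verbatim and gives $(V_1)$ or $(V_2)$.

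\textbf{Main obstacle.} The hardest step is selecting $A,B$ that simultaneously satisfy primitivity $\gcd(A,B,X,Y)=1$, non-degeneracy $A,B\neq1$ with neither being a perfect square of the degenerate form, and compatibility with the Catalan parametrization. The difficulty is concentrated in the boundary cases, for example $n_0=X_0^2+Y_0^2$ being a single prime of $\mathcal{P}_{1,4}$ while $d$ supplies the other factor. I would first try putting $z_A=\pi$, a single Gaussian prime dividing $X_0+iY_0$, so that $A=|\pi|^2$ is prime and not a square, and $z_B=(X+iY)/\pi$. I would then check that $B$ is not of the form $q^2$ with $q\mid Y$ coprime to $X$ in the required way, falling back on swapping the roles of $A$ and $B$ if needed.
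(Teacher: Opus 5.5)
Your proof has a genuine gap at the choice of factorization. You need some $n=AB$ with $\gcd(A,B,X,Y)=1$, $A,B\neq1$ and $pu\neq0$, and the hypothesis $\sum_{p\in\mathcal{P}_{1,4}}\nu_p(n)\ge 2$ does not provide one.

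Since $\gcd(A,B,X,Y)=\gcd(A,B,d)$, primitivity forces the whole $\ell$-part of $n$ onto one side for every prime $\ell\mid d$. In particular you cannot split a conjugate pair coming from $d$ across the two sides.
\begin{itemize}
\item For $n=125=10^2+5^2$ and $n=625=20^2+15^2$ there is no factorization with $A,B>1$ and $\gcd(A,B,X,Y)=1$ at all.
\item For $n=845=26^2+13^2$ (so $d=13$, $X_0^2+Y_0^2=5$) the only ones are $(5,169)$ and $(169,5)$. These are exactly your first choice $z_A=\pi$ and its swap, and both are degenerate. Any solution of $(P_0)$ gives $X+iY=z_Az_B$ with $z_A=(q+s)-i(p-r)$ and $z_B=(q+ip)/2^{h-1}$, of norms $A$ and $B$. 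Up to units, $2+i$ is the only Gaussian prime of norm $5$ dividing $13(2+i)$, so the factor of norm $169$ is $\pm13$ or $\pm13i$. Since $q$ and $s$ are odd when $h=1$, this forces $p=0$, resp.\ $u=0$.
\end{itemize}
Yet the lemma holds in all three cases: $(\hat a,\hat b)=(2,1)$ for $845$, $(1,2)$ for $125$, and $(V_2)$ with $\check h=1$, $(\check a,\check b)=(5,5)$ for $625$. So no check among primitive factorizations can succeed; the primitivity requirement itself has to go. Relatedly, a prime of $d$ need not divide $n$ to an even power, as $125$ shows.

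Two ideas close the gap.

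First, the rescaling you dismissed does work. If $(\hat a,\hat b)$ satisfies $(V_1)$ for $(X_0,Y_0)$, then $(d\hat a,d\hat b)$ satisfies it for $(X,Y)$, because $\frac{2d\hat a(2\cdot 4^h d\hat a-Y)}{d\hat b}+d\hat b=d\bigl(\frac{2\hat a(2\cdot 4^h\hat a-Y_0)}{\hat b}+\hat b\bigr)=d(2N_0+1)=2N+1$. The same substitution works for $(V_2)$, and $d$ odd keeps $\check a,\check b$ odd. This is how the paper disposes of the case $X_0^2+Y_0^2$ composite, which covers $625$.

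Second, when $X_0^2+Y_0^2=\pi$ is prime, bypass \Cref{t2}. For any factorization $X+iY=(2a_1-ib_2)(a_2+2ib_1)$ with $b_1\ge 0$, the quadruple $(2^hb_1,\,2^{h-1}a_2,\,2^hb_1-b_2,\,2a_1-2^{h-1}a_2)$ of \Cref{p15} satisfies $(P_0)$ by the same direct computation, which uses no gcd hypothesis. Moreover $pu\neq0$ iff $a_1b_1\neq0$, i.e.\ iff neither factor is a unit times a rational integer. Take $z_B$ to be a Gaussian prime above some $\pi'\in\mathcal{P}_{1,4}$ dividing $d$, not associate to $X_0+iY_0$; this achieves $pu\neq0$ with a non-primitive $(A,B)$. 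For example, $26+13i=(8-i)(3+2i)$ gives $(A,B)=(65,13)$ and $(\hat a,\hat b)=(2,12)$, and $10+5i=(4-3i)(1+2i)$ gives $(1,2)$ for $125$.

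Do not rely on the paper's treatment of this case. It takes precisely $(A,B)=(\pi,d^2)$ and exhibits a quadruple with $p\neq0$, but that quadruple has $p-r=y$ and $q+s=x$. Hence $p^2+q^2-pr+qs=py+qx$, which is not $2^{h-1}X$ in general ($22\neq 26$ for $845$). In fact, when $\pi\nmid d$, every solution of $(P_0)$ for $(\pi,d^2)$ has $p=0$.
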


\paragraph{\textnormal{\underbar{\textit{Proof}}}}{:}
We know that to any factorization $n=X^2+Y^2=AB$ such that ${\mathrm{gcd} \left(A,B,X,Y\right)\ }=1$ we can associate a quadruple $\left(p,q,r,s\right)$ from which we can construct an equality $\left(V_1\right)$ or $\left(V_2\right)$ \underbar{provided that} $pu\neq 0$.
\\[10pt]
\noindent Let $n=X^2+Y^2$ be a trivial composite number such that $\sum_{p\in {\mathcal{P}}_{1,4}}{\nu_p\left(n\right)}\ge 2$.
\\[10pt]
\noindent We set $d={\mathrm{gcd} \left(X,Y\right)\ }>1$, $X=dx$, $Y=dy$. If $x^2+y^2$ is composite, we can apply \Cref{p12} and deduce that one of the following two assertions holds:

\begin{enumerate}
\item  There exists $\left(\hat{a},\hat{b}\right)\in {\left({\mathbb{Z}}^*\right)}^2$ such that  $\frac{2N+1}{d}=\left(\frac{2\hat{a}\left(2\cdot 4^h\hat{a}-y\right)}{\hat{b}}\right)+\hat{b}$ from which we obtain $2N+1=\left(\frac{2\hat{a}d\left(2\cdot 4^h\hat{a}d-Y\right)}{\hat{b}d}\right)+\hat{b}d$.

\item  There exists $\left(\hat{a},\hat{b}\right)\in {\left({2\mathbb{Z}+1}\right)}^2$ and $1\le \check{h}\le h-1$ such that $2^{h-\check{h}}\frac{2N+1}{d}=\frac{\check{a}\left(4^{\check{h}}\check{a}-y\right)}{\check{b}}+\check{b}$ from which we obtain $2^{h-\check{h}}\left(2N+1\right)=\frac{\check{a}d\left(4^{\check{h}}\check{a}d-Y\right)}{\check{b}d}+\check{b}d$.
\end{enumerate}

\noindent In both cases, the lemma is verified. For the remainder of the proof, we can therefore assume that $x^2+y^2=\pi$ is a prime number.
\\[10pt]
\noindent We recall the equations linking $\left(X,Y,A,B\right)$ with $\left(p,q,r,s\right)$:
\[\left\{ \begin{array}{l}
2^{2\left(h-1\right)}B=p^2+q^2\ , \\ 
A={\left(p-r\right)}^2+{\left(q+s\right)}^2, \\ 
2^{h-1}X=p^2+q^2-pr+qs, \\ 
2^{h-1}Y=ps+qr \\ 
p\ge 0 \\ 
\nu_2\left(p\right)\ge \nu_2\left(q\right) \end{array}
\right.\] 
The condition $p=0$ implies that $B$ is the square of a number dividing $X$, while $u=q+s=0$ implies that $A$ is the square of a number dividing $Y$.
\\[10pt]
\noindent Consequently, we are left with the case where $X^2+Y^2=\pi d^2$ with $\pi={\left|x+iy\right|}^2$ and $d={\mathrm{gcd} \left(X,Y\right)\ }$ is a multiple of a prime number $\pi'\in {\mathcal{P}}_{1,4}$. Let $\alpha,\ \beta \in \mathbb{N}^*$ such that  $\pi'={\left|\alpha+i\beta\right|}^2$. We write $d=\pi'd'$ so that $d^2={\left|\left(\alpha^2-\beta^2\right)d'+2i\alpha \beta d'\right|}^2$.
\\[10pt]
\noindent Set $A=\pi$ and $B=d^2$. Let us show that the quadruple:

\begin{center}
$\left(p=2^hd'\alpha\beta,\ q=2^{h-1}d'\left(\alpha^2-\beta^2\right),r=2^hd'\alpha\beta-y,s=x-2^{h-1}d'\left(\alpha^2-\beta^2\right)\right)$
\end{center}

\noindent is a solution to the system above, with $p\neq 0$:

\begin{enumerate}[label=$\bullet$]
\item  $p^2+q^2=2^{2\left(h-1\right)}d^{'2}{\left(\alpha^2+\beta^2\right)}^2=2^{2\left(h-1\right)}B$

\item  ${\left(p-r\right)}^2+{\left(q+s\right)}^2=y^2+x^2=\pi=A$

\item  $p^2+q^2-pr+qs=2^{h-1}d'\left(\alpha^2+\beta^2\right)x=2^{h-1}X$

\item  $ps+qr=2^{h-1}d'\left(\alpha^2+\beta^2\right)y=2^{h-1}Y$

\item  $p>0$ by definition

\item  $\nu_2\left(p\right)=\nu_2\left(q\right)+1+\nu_2\left(\alpha\beta\right)\ge \nu_2\left(q\right)$ because $\alpha^2-\beta^2$ must be odd.
\end{enumerate}

\noindent Moreover $u\neq 0$ because $A=\pi$ is not a square. That's what we needed to prove. $\blacksquare $
\\[10pt]
\noindent \underbar{Remark}~: the quadruple $\left(p=0,q=2^{h-1}d,r=y,s=x-q\right)$ is also a suitable choice for the last case studied, illustrating the non-uniqueness of the solutions to $\left(P_0\right)$.

\begin{remark}{:}
Equations $\left(V_1\right)$ or $\left(V_2\right)$ lead to alternative decompositions of $n$ as sum of two squares. More precisely, if there exist $\hat{a},\hat{b}$ satisfying $\left(V_1\right)$ then:
\begin{center}
$n=X^2+Y^2={\left(X-2^{h+1}\hat{b}\right)}^2+{\left(Y-4^{h+1}\hat{a}\right)}^2$
\end{center}
and if there exist $\check{a},\check{b},\check{h}$ satisfying $\left(V_2\right)$ then:
\begin{center}
$n=X^2+Y^2={\left(X-2^{\check{h}+1}\check{b}\right)}^2+{\left(Y-2.4^{\check{h}}\check{a}\right)}^2.$\textit{}
\end{center}
\end{remark}

\paragraph{\textnormal{\underbar{\textit{Proof}}}}{:}
Expanding the expressions yields the result. In the case $\left(V_1\right)$ we have $2N+1=\left(\frac{2\hat{a}\left(2\cdot 4^h\hat{a}-Y\right)}{\hat{b}}\right)+\hat{b}$, $X=2^h\left(2N+1\right)$ and:
\begin{center}
${\left(X-2^{h+1}\hat{b}\right)}^2+{\left(Y-4^{h+1}\hat{a}\right)}^2=X^2+Y^2+4^{h+1}\hat{b}\left(\hat{b}-\left(2N+1\right)+\frac{4^{h+1}{\hat{a}}^2-2\hat{a}Y}{\hat{b}}\right)=n.$
\end{center}

\noindent The proof is analogous for case $\left(V_2\right)$. $\blacksquare $
\\[10pt]
\noindent Together with the results that will be established in Part 3, this implies that if $\sum_{p\in {\mathcal{P}}_{1,4}}{\nu_p\left(n\right)}\le 1$ then equations $\left(V_1\right)$ and $\left(V_2\right)$ are never satisfied, thereby establishing the converse of \Cref{l1}.

\noindent 
\section{Quadratic Sequences on X and the identity of Diophantus}

\noindent In this section, we use the results from section 2 to construct quadratic sequences $\left(X_i\right)$ such that $n_i\coloneqq X^2_i+Y^2$ is composite in $E^*_{Y^2}$.

\begin{definition}{:}
Let $E^1_{Y^2}$ (respectively $E^2_{Y^2}$) be the set of composite numbers of $E^*_{Y^2}$ which fall into the case $\left(V_1\right)$ (resp $\left(V_2\right)$). In other words, $n=X^2+Y^2$ is in $E^1_{Y^2}$ (resp. $E^2_{Y^2}$) if and only if it is a non-trivial composite $n=AB$, with $A,B>1$, such that there exists a quadruple $\left(p,q,r,s\right)$ verifying equations $\left(P_0\right)$ and $\nu_2\left(p\right)>2\left(h-1\right)$ (resp. $\nu_2\left(p\right)\le 2\left(h-1\right)$).\textbf{}
\\[10pt]
\noindent According to the results of Part 2, $E^1_{Y^2}\cup E^2_{Y^2}=E^*_{Y^2}\diagdown \mathbb{P}$.
\end{definition}

\noindent 
\subsection{Sequences on X}

\noindent Throughout this section, let $n=X^2+Y^2\in E^*_{Y^2}$ be a composite number, with a factorization $n=AB$ where $A,B>1$. Notations from Part 2 are also retained.

\noindent 
\subsubsection{Factors A and B as sums of two squares}

\noindent Recall the identity $2N+1=2^{-(h-1)}\left[\frac{a\left(4a-Y\right)}{b}+b\right]$. Since $b$ divides $a\left(4a-Y\right)$ we can write this identity as:
\begin{center}
$X=2\left(a_1a_2+b_1b_2\right)$
\end{center}
\noindent with a factorization $b=b_1b_2$ such that $a_1=\frac{a}{b_1}$ and $a_2=\frac{4a-Y}{b_2}$ are integers. In particular, we have $Y=4a-a_2b_2=4a_1b_1-a_2b_2$. The four integers $a_1,a_2,b_1,b_2$ must be non-zero.
\\
\noindent From this, we deduce the \underbar{non-trivial factorization}:
\begin{align*}
X^2+Y^2
&=4\left(a^2_1a^2_2+b^2_1b^2_2+2a_1a_2b_1b_2\right)+\left(16a^2_1b^2_1+a^2_2b^2_2-8a_1a_2b_1b_2\right) \\
&=4a^2_1a^2_2+4b^2_1b^2_2+16a^2_1b^2_1+a^2_2b^2_2 \\
&=\left(4a^2_1+b^2_2\right)\left(a^2_2+4b^2_1\right)
\end{align*}

\noindent Since $b_1,b_2$ are not uniquely determined, we cannot necessarily identify this factorization with the initial factorization $X^2+Y^2=AB$. Moreover, from $X^2+Y^2=AB$ and ${\mathrm{gcd} \left(X,Y\right)\ }=1$ and Fermat’s two squares theorem, it follows that $A$ and $B$ can be written as the sum of two non-zero squares, one even and the other odd. We then find another identity $X^2+Y^2=\left(4a^2_1+b^2_2\right)\left(a^2_2+4b^2_1\right)$ where now $A=4a^2_1+b^2_2$, $B=a^2_2+4b^2_1$ through we no longer necessarily have $X=2\left(a_1a_2+b_1b_2\right)$ or $Y=4a_1b_1-a_2b_2$ (and again the choice of $a_1,a_2,b_1,b_2$ is not unique). 
\\
The following proposition, however, shows that we can satisfy these four conditions simultaneously, for a unique choice of integers:

\begin{proposition}{:}
\label{p13}
There exists a quadruple $\left(a_1,a_2,b_1,b_2\right)\in {{\mathbb{Z}}^*}^4$, unique up to the sign, such that the following equations are verified:
\[ \begin{array}{c}
 \begin{array}{cc}
X=2\left(a_1a_2+b_1b_2\right), & Y=4a_1b_1-a_2b_2, \\ 
A=4a^2_1+b^2_2, & B=a^2_2+4b^2_1. \end{array}
\ \ \ \ \left(P'_0\right) \end{array}
\] 
\end{proposition}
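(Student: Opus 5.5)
The natural route is through Gaussian integers, via \Cref{c1}. Consider the Gaussian integers $z_A = b_2 + 2i a_1$ and $z_B = a_2 + 2i b_1$. Their product is
\[z_Az_B = (a_2b_2 - 4a_1b_1) + 2i(a_1a_2 + b_1b_2) = -Y + iX.\]
So the four equations of $\left(P'_0\right)$ say exactly that $-Y+iX = i(X+iY) = z_Az_B$ with $|z_A|^2=A$ and $|z_B|^2=B$, where $z_A$ has even imaginary part and $z_B$ has even imaginary part. The plan is therefore to apply \Cref{c1} to $X^2+Y^2=AB$. This is legitimate: $AB$ is odd, and it has no prime factor in $\mathcal{P}_{3,4}$ by \Cref{p1}, since $n\in E^*_{Y^2}$.

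\Cref{c1} gives $z_A, z_B$ with $X+iY = z_Az_B$, $A=|z_A|^2$, $B=|z_B|^2$, unique up to a unit because $\gcd(X,Y)=1$. Multiplying $z_A$ by $i$ gives $iX - Y = (iz_A)z_B$. Write $iz_A = c+id$ and $z_B = e+if$.

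\textbf{Existence.} Since $A$ and $B$ are odd, exactly one of $c,d$ and exactly one of $e,f$ is even. We may still replace $(iz_A, z_B)$ by $(w\, iz_A, \overline{w} z_B)$ for a unit $w$. Multiplying one factor by $i$ and the other by $-i$ swaps real and imaginary parts in both simultaneously. Hence we can arrange that $d$ is even.

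It remains to check that $f$ is then even as well. Compute $\Re\left((c+id)(e+if)\right) = ce - df = -Y$, which is odd. With $d$ even this forces $ce$ odd, so $e$ is odd and therefore $f$ is even.

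Now set $b_2 = c$, $a_1 = d/2$, $a_2 = e$, $b_1 = f/2$. Reading off real and imaginary parts gives all four equations of $\left(P'_0\right)$.

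\textbf{Nonvanishing.} We must show $a_1,a_2,b_1,b_2\neq 0$. If $a_1=0$, then $A=b_2^2$ and $b_2$ divides $z_Az_B$, hence divides both $X$ and $Y$. Coprimality then forces $A=1$, which is excluded. The cases $b_2=0$, $a_2=0$ and $b_1=0$ are handled symmetrically, each making $A$ or $B$ a square of a common divisor of $X$ and $Y$.

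\textbf{Uniqueness.} This is the step I expect to be the main obstacle, because the notion of ``up to sign'' has to be pinned down. By \Cref{c1}, any other solution corresponds to $(w\,iz_A, \overline{w} z_B)$ for some unit $w$. Multiplying by $\pm i$ breaks the parity normalization: it makes $d$ odd, since $c$ was odd. So only $w=\pm1$ remains, which is the simultaneous sign change $(a_1,a_2,b_1,b_2)\mapsto(-a_1,-a_2,-b_1,-b_2)$.

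One subtlety needs checking. Uniqueness in \Cref{c1} is for the ordered pair with prescribed norms $(A,B)$. Swapping the roles of $A$ and $B$ is not allowed here, since $A$ and $B$ are fixed in $\left(P'_0\right)$. The coprimality argument in \Cref{c1} is the key input guaranteeing that no non-associate alternative factorization exists.
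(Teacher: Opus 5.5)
Your proof is correct and follows essentially the same route as the paper: factor $X+iY$ in $\mathbb{Z}[i]$ into factors of norms $A$ and $B$ (you invoke \Cref{c1}), fix the unit so the parities match, read off the coefficients, derive nonvanishing from $\gcd(X,Y)=1$ and $A,B>1$, and derive uniqueness up to sign from the absence of conjugate irreducible factors. Your $z_A=b_2+2ia_1$ is just $i$ times the paper's $2a_1-ib_2$. Your version is also more explicit than the paper on two points it leaves to ``up to multiplication by a unit'': why the unit can be chosen so that both imaginary parts are even (using that $Y$ is odd), and why $w=\pm i$ is excluded in the uniqueness step.
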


\paragraph{\textnormal{\underbar{\textit{Proof}}}}{:}
With Gaussian integers, we factor $X+iY$ into a product of irreducible factors $\prod_k{{\left(p_k+iq_k\right)}^{\nu_k}}$, which yields $X^2+Y^2={\left|X+iY\right|}^2=\prod_k{{\left|p_k+iq_k\right|}^{2\nu_k}}$.
\\[10pt]
\noindent It follows that there exist integers $\alpha_k,\beta_k$ such that $A=\prod_k{{\left|p_k+iq_k\right|}^{2\alpha_k}}$, $B=\prod_k{{\left|p_k+iq_k\right|}^{2\beta_k}}$ and $\alpha_k+\beta_k=\nu_k$.
\\[10pt]
\noindent In particular, by setting (up to multiplication by a unit) $2a_1-ib_2=\prod_k{{\left(p_k+iq_k\right)}^{\alpha_k}}$ and $a_2+2ib_1=\prod_k{{\left(p_k+iq_k\right)}^{\beta_k}}$, we obtain $X+iY=\left(2a_1-ib_2\right)\left(a_2+2ib_1\right)$. This implies $X=2\left(a_1a_2+b_1b_2\right)$ and $Y=4a_1b_1-a_2b_2$, while by construction we also have $A=4a^2_1+b^2_2$ and $B=a^2_2+4b^2_1$.
\\[10pt]
\noindent Moreover, let us show that all four $\left(a_1,a_2,b_1,b_2\right)$ are non-zero. First, $a_2$ and $b_2$ must be odd because $A$ and $B$ are. Second, if $a_1=0$ then $\left|b_2\right|>1$ (because $A\neq 1$), which would make $b_2$ a common divisor of $X=2b_1b_2$ and $Y=-a_2b_2$ contradicting ${\mathrm{gcd} \left(X,Y\right)\ }=1$. Last, if $b_1=0$ then similarly $\left|a_2\right|>1$ would be a common divisor to $X,Y$, which is impossible.
\\[10pt]
\noindent Finally, this representation is unique up to sign. Indeed, if $X+iY=\left(2a'_1-ib'_2\right)\left(a'_2+2ib'_1\right)$ with $A={\left|2a'_1-ib'_2\right|}^2$ and $B=|a'_2+2ib'_1|\textrm{²}$ then the irreducible factors of $2a_1-ib_2$ and $2a'_1-ib'_2$ must be the same up to conjugation. However $X+iY$ cannot have conjugate irreducible factors (otherwise $X$ and $Y$ would not be coprime). This implies that these complex numbers are equal up to sign, which completes the proof. $\blacksquare $

\begin{example}{:}
\begin{enumerate}[label=\alph*.]
\item  With $n=5185={44}^2+{57}^2=17\times \left(5\times 61\right)$ we have $44+57i=\left(2+i\right)\left(4-i\right)\left(6+5i\right)$. We derive $2a_1-ib_2=4-i$ and $a_2+2ib_1=\left(2+i\right)\left(6+5i\right)=7+16i$, i.e. $\left(a_1,a_2,b_1,b_2\right)=\left(2,7,8,1\right)$.

\item  With $n=85=2^2+9^2=5\times 17$ we have $2+9i=i\left(2-i\right)\left(4+i\right)$. We derive $2a_1-ib_2=2-i$ and $a_2+2ib_1=-1+4i$, i.e. $\left(a_1,a_2,b_1,b_2\right)=\left(1,-1,2,1\right)$.
\end{enumerate}
\end{example}

\begin{corollary}{:}
\label{c6}
Let be $\mathcal{F}=\left\{\left(X,Y,A,B\right)\in 2\mathbb{N}\mathrm{\times }\left(2\mathbb{N}\mathrm{+}1\right)\mathrm{\times }\mathbb{N}\mathrm{\times }\mathbb{N}\mathrm{|}{\mathrm{pgcd} \left(X,Y\right)\ }=1,\ A,B>1\right\}$ the set of decompositions of non-trivial Fermat numbers. Then there exists a unique function
\begin{center}
\noindent $\mathcal{D}\ :\ \  \begin{array}{ccc}
\mathcal{F} & \to  & {\mathbb{Z}}^4 \\ 
\left(X,Y,A,B\right) & \mapsto  & \left(a_1,a_2,b_1,b_2\right) \end{array}
$
\end{center}
such that $\left(a_1,a_2,b_1,b_2\right)$  is the unique quadruple satisfying $\left(P'_0\right)$ and $b_1>0$.
\end{corollary}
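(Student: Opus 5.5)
This follows almost directly from \Cref{p13}, so the plan is to reduce the existence and uniqueness of the function $\mathcal{D}$ to the existence and uniqueness, up to sign, of the quadruple $\left(a_1,a_2,b_1,b_2\right)$. Fix $\left(X,Y,A,B\right)\in\mathcal{F}$. First I will check that the hypotheses of \Cref{p13} are met. We have $X$ even, $Y$ odd, $\mathrm{gcd}\left(X,Y\right)=1$, $AB=X^2+Y^2$ and $A,B>1$; the relation $AB=X^2+Y^2$ is implicit in $\mathcal{F}$ being the set of decompositions. Then $X^2+Y^2$ is an odd element of $E^*_{Y^2}$, provided $X\neq 0$. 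If $X=0$ then $Y^2=AB$ with $\mathrm{gcd}\left(0,Y\right)=Y=1$, which contradicts $A,B>1$. \Cref{p13} then yields a quadruple in ${\left({\mathbb{Z}}^*\right)}^4$ satisfying $\left(P'_0\right)$, unique up to sign.

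Next I will make "up to sign" precise. The proof of \Cref{p13} shows that $2a_1-ib_2$ is determined up to a unit $w$, with $a_2+2ib_1$ replaced by $w^{-1}\left(a_2+2ib_1\right)$. Multiplying by $\pm i$ would exchange the parity pattern: $i\left(2a_1-ib_2\right)=b_2+2ia_1$ has odd real part, so it no longer has the form $2a'_1-ib'_2$. Hence only $w=\pm 1$ is admissible, and the solutions of $\left(P'_0\right)$ are exactly $\left(a_1,a_2,b_1,b_2\right)$ and $\left(-a_1,-a_2,-b_1,-b_2\right)$. I will check this directly in the plan: the products $a_1a_2$, $b_1b_2$, $a_1b_1$, $a_2b_2$ and the squares are invariant under the global sign change, while the mixed sign changes are excluded by the unit argument above.

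Finally, since $b_1\neq 0$ by \Cref{p13}, exactly one of the two admissible quadruples has $b_1>0$. Setting $\mathcal{D}\left(X,Y,A,B\right)$ to be that quadruple defines a function, and it is the unique such function because its value at each point is forced. The only delicate step is the claim that "unique up to sign" means exactly the two global signs and not, for example, independent sign changes on factors. I intend to settle it with the unit computation in $\mathbb{Z}\left[i\right]$ and the parity constraints ($a_2,b_2$ odd) rather than relying on the informal wording of \Cref{p13}.
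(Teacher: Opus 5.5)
Your proposal is correct and follows the same route as the paper. The paper gives no separate proof and treats the corollary as immediate from \Cref{p13}: uniqueness up to sign, together with $b_1\neq 0$, singles out the quadruple with $b_1>0$. Your extra details (excluding the units $\pm i$ via the parity of $b_2$, ruling out $X=0$, and reading in the omitted condition $X^2+Y^2=AB$) only make explicit what the paper compresses into ``equal up to sign.''
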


\begin{remark}{:}
By extension, we will denote the four components of $\mathcal{D}\left(X,Y,A,B\right)$ by $a_1\left(X,Y,A,B\right)$, $a_2\left(X,Y,A,B\right)$, $b_1\left(X,Y,A,B\right)$, $b_2\left(X,Y,A,B\right)$.
\end{remark}

\noindent 
\subsubsection{Sequences on the terms of Diophantus' identity}

\noindent We can obtain other non-trivial composite numbers in $E^*_{Y^2}$ by modifying the quadruple provided that $4a_1b_1-a_2b_2$ remains equal to $Y$.

\begin{proposition}{:}
\label{p14}
Let $y$ be the product of the prime numbers dividing $Y$ and $m={\mathrm{lcm} \left(a_2,4b_1,y\right)}$. The sequence
\begin{center}
$X_k=2\left(\left(a_1+\frac{m}{4b_1}k\right)a_2+b_1\left(b_2+\frac{m}{a_2}k\right)\right)$
\end{center}
is such that $X^2_k+Y^2$ is composite and belongs to $E^*_{Y^2}$, provided that $a_1+\frac{m}{4b_1}k\neq 0$. More precisely, for any such $k\in \mathbb{Z}$ (and especially for $\left|k\right|$ sufficiently large), setting $A_k=4{\left(a_1+\frac{m}{4b_1}k\right)}^2+{\left(b_2+\frac{m}{a_2}k\right)}^2$ yields:
\[X^2_k+Y^2=A_kB,\] 
\[\mathcal{D}\left(X_k,Y,A_k,B\right)=\left(a_1+\frac{m}{4b_1}k,a_2,b_1,b_2+\frac{m}{a_2}k\right).\] 
\end{proposition}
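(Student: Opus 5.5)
Set $a_1'=a_1+\frac{m}{4b_1}k$ and $b_2'=b_2+\frac{m}{a_2}k$; both are integers because $4b_1\mid m$ and $a_2\mid m$. The plan is to check in turn that the quadruple $(a_1',a_2,b_1,b_2')$ satisfies the algebraic relations $\left(P'_0\right)$ with $X_k$ and $A_k$, that $\gcd(X_k,Y)=1$, and that $A_k,B>1$. Uniqueness in \Cref{p13} and the normalisation of \Cref{c6} then give the claimed value of $\mathcal{D}$.

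\textbf{Algebraic identities.} First,
\[4a_1'b_1-a_2b_2' = 4a_1b_1 + mk - a_2b_2 - mk = Y,\]
so $Y$ is unchanged by the shift. Next, $X_k=2(a_1'a_2+b_1b_2')$ by definition. The Diophantus identity computed just before \Cref{p13} (the expansion of $(2a_1-ib_2)(a_2+2ib_1)$) then gives
\[X_k^2+Y^2=(4a_1'^2+b_2'^2)(a_2^2+4b_1^2)=A_kB.\]
The factor $B$ is the original one, so $B>1$. Since $b_2'$ is odd (see the last paragraph), $A_k\ge 4a_1'^2+1>1$ as soon as $a_1'\neq0$. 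Hence $X_k^2+Y^2$ is composite.

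\textbf{Coprimality (the main obstacle).} This is where the factor $y$ in $m$ is used. Let $\ell$ be a prime dividing $Y$. Then $\ell\mid y\mid m$, and $\frac{m}{4b_1}$ and $\frac{m}{a_2}$ are multiples of $\frac{y}{\gcd(y,4b_1)}$ and $\frac{y}{\gcd(y,a_2)}$ respectively, so I must watch the cases $\ell\mid b_1$ and $\ell\mid a_2$.
\begin{itemize}
\item If $\ell\nmid 4b_1a_2$, then $\ell$ divides both shifts, so $X_k\equiv X \pmod{\ell}$, and $\ell\nmid X$.
\item If $\ell\mid b_1$, then $\ell\nmid a_2$: otherwise $\ell\mid B$ and $\ell\mid Y$ would give $\ell\mid X$, since $X^2=AB-Y^2$. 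Then $\ell\mid \frac{m}{a_2}$, so $X_k\equiv 2a_1'a_2$ and $X\equiv 2a_1a_2 \pmod{\ell}$. From $Y=4a_1b_1-a_2b_2$ we get $\ell\mid a_2b_2$, so $\ell\mid b_2$, and then $\ell\mid X$ — a contradiction. So in fact $\ell\nmid b_1$, and by the symmetric argument $\ell\nmid a_2$.
\end{itemize}
A cleaner route I would try first: if $\ell\mid Y$ and $\ell\mid B$, then $\ell\mid X$, which is impossible, so $\gcd(B,Y)=1$. Any prime dividing $Y$ and $b_1$ or $a_2$ would then have to divide $A$ through the identity above, and from this one derives a contradiction with $\gcd(X,Y)=1$. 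The conclusion in either version is that $\ell\mid\frac{m}{4b_1}$ and $\ell\mid\frac{m}{a_2}$. Therefore $X_k\equiv X\pmod{\ell}$ and $\ell\nmid X_k$, so $\gcd(X_k,Y)=1$.

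\textbf{Parity and the normalisation of $\mathcal{D}$.} $X_k$ is even by definition. Because $m$ is a multiple of $4b_1$, $\frac{m}{a_2}$ is even, so $b_2'\equiv b_2$ is odd and $A_k$ is odd. Thus $X_k^2+Y^2\in E^*_{Y^2}$ (after replacing $X_k$ by $|X_k|$ if needed). Since $b_1>0$ is unchanged, \Cref{c6} identifies $\mathcal{D}(X_k,Y,A_k,B)=\left(a_1',a_2,b_1,b_2'\right)$, as claimed.
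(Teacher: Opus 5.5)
Your algebraic part is correct and matches the paper. The shift leaves $Y=4a_1'b_1-a_2b_2'$ unchanged, Diophantus' identity gives $X_k^2+Y^2=A_kB$, and $4\mid\frac{m}{a_2}$ makes $b_2'$ odd, so $A_k>1$ whenever $a_1'\neq 0$.

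\textbf{The gap is the coprimality step.} In your second bullet you write ``$\ell\mid b_2$, and then $\ell\mid X$''. But $\ell\mid b_1$ and $\ell\mid b_2$ only give $X\equiv 2a_1a_2\pmod{\ell}$, which need not vanish. So the claim that no prime dividing $Y$ divides $b_1$ (or $a_2$) is false, and your ``cleaner route'' fails too. Take $(a_1,a_2,b_1,b_2)=(1,1,3,3)$. Then $X+iY=(2-3i)(1+6i)=20+9i$, $A=13$, $B=37$ and $\gcd(20,9)=1$. Here $3$ divides $Y$, $b_1$ and $b_2$, but not $A$.

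\textbf{The conclusion itself fails, so this step cannot be patched as stated.} In the example above, $y=3$ and $m=\mathrm{lcm}(1,12,3)=12$, so $\frac{m}{4b_1}=1$, $\frac{m}{a_2}=12$ and $X_k=20+74k$. For $k=2$ the proviso holds ($a_1'=3\neq 0$), yet $X_2=168$ is divisible by $3$, and $X_2^2+81=28305=9\cdot 3145\notin E^*_{81}$. The swapped factorization $(3,3,1,1)$, with $A=37$, $B=13$ and $3\mid a_2$, fails the same way: $X_k=20+26k$ and $X_2=72$.

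Your own bullet computation shows why. When $\ell\mid b_1$ you have $X_k\equiv 2a_1'a_2\pmod{\ell}$. Since $\frac{m}{4b_1}$ may be prime to $\ell$, $a_1'$ runs through all residues mod $\ell$ as $k$ varies.

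The paper's proof has the same hole. Its step ``$p$ divides $m$ \dots\ hence does not divide $X_k$'' is valid only if $p$ divides both shifts $\frac{m}{4b_1}$ and $\frac{m}{a_2}$.

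\textbf{A fix.} Take $m=y\,\mathrm{lcm}(a_2,4b_1)$, or restrict to $k\in y\mathbb{Z}$. Then $y$ divides both shifts, and $4$ still divides $\frac{m}{a_2}$. Your first bullet now applies to every prime $\ell\mid Y$ and gives $X_k\equiv X\pmod{\ell}$. The rest of your argument, including the identification of $\mathcal{D}$ through \Cref{c6}, then goes through unchanged.
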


\paragraph{\textnormal{\underbar{\textit{Proof}}}}{:}
By construction $Y=4\left(a_1+\frac{m}{4b_1}k\right)b_1-a_2\left(b_2+\frac{m}{a_2}k\right)$. Thus:
\[X^2_k+Y^2=\left(4{\left(a_1+\frac{m}{4b_1}k\right)}^2+{\left(b_2+\frac{m}{a_2}k\right)}^2\right)\left(a^2_2+4b^2_1\right)\] 
Since $m$ is even, $X^2_k+Y^2$ is odd. Furthermore, since $a_2$ and $b_2$ are odd, $b_2+\frac{m}{a_2}k$ is odd and therefore non-zero. If, in addition, $a_1+\frac{m}{4b_1}k$ is non-zero, then $\left(4{\left(a_1+\frac{m}{4b_1}k\right)}^2+{\left(b_2+\frac{m}{a_2}k\right)}^2\right)>1$ and $X^2_k+Y^2$ is composite.
\\[10pt]
\noindent Finally, for any prime $p$ dividing $Y$, $p$ divides $m$ but does not divide $X$, and hence does not divide $X_k$. Thus $X_k$ is coprime to $Y$, i.e.  $X^2_k+Y^2\in E^*_{Y^2}$. $\blacksquare $

\begin{remark}{:}
Other sequences can be constructed similarly by adding arithmetic progressions to $\left(a_2,b_1\right),(a_1,a_2)$ or $\left(b_1,b_2\right)$.
\end{remark}

\noindent Call those defined in \Cref{p13} \textit{Type 1 sequences}, we can also define:

\begin{enumerate}[label=$\bullet$]
\item  \textit{Type 2 sequences}: let
\begin{align*}
&m={\mathrm{lcm} \left(4a_1,b_2,y\right)\ },\ k\in \mathbb{Z} \\
&X_k=2\left(a_1\left(a_2+\frac{m}{b_2}k\right)+\left(b_1+\frac{m}{4a_1}k\right)b_2\right),\\
&B_k={\left(a_2+\frac{m}{b_2}k\right)}^2+4{\left(b_1+\frac{m}{4a_1}k\right)}^2,
\end{align*}
We have $\mathcal{D}\left(X_k,Y,A,B_k\right)=\left(a_1,a_2+\frac{m}{b_2}k,b_1+\frac{m}{4a_1}k,b_2\right)$.

\item  \textit{Type 3 sequences}: let 
\begin{align*}
&m={\mathrm{lcm} \left(4b_1,b_2,y\right)\ },\ k\in \mathbb{Z} \\
&X_k=2\left(\left(a_1+\frac{m}{4b_1}k\right)\left(a_2+\frac{m}{b_2}k\right)+b_1b_2\right),\\
&A_k=4{\left(a_1+\frac{m}{4b_1}k\right)}^2+b^2_2, \\
&B_k={\left(a_2+\frac{m}{b_2}k\right)}^2+4b^2_1,
\end{align*}
We have $\mathcal{D}\left(X_k,Y,A_k,B_k\right)=\left(a_1+\frac{m}{4b_1}k,a_2+\frac{m}{b_2}k,b_1,b_2\right)$.

\item  \textit{Type 4 sequences}: let 
\begin{align*}
&m={\mathrm{lcm} \left(4a_1,a_2,y\right)\ },\ k\in \mathbb{Z} \\
&X_k=2\left(a_1a_2+\left(b_1+\frac{m}{4a_1}k\right)\left(b_2+\frac{m}{a_2}k\right)\right),\\
&A_k=4a^2_1+{\left(b_2+\frac{m}{a_2}k\right)}^2, \\
&B_k=a^2_2+4{\left(b_1+\frac{m}{a_1}k\right)}^2,
\end{align*}
We have $\mathcal{D}\left(X_k,Y,A_k,B_k\right)=\left(a_1,a_2,b_1+\frac{m}{4a_1}k,b_2+\frac{m}{a_2}k\right)$.
\end{enumerate}

\noindent Note, however, that for the last two types, the corresponding sequence will be a quadratic sequence, meaning it describes a subset of $E^*_{Y^2}$ with an asymptotic density of zero, unlike the first two types.

\noindent 
\subsection{Relationship between quadruples $\left({\boldsymbol{a}}_{\boldsymbol{1}},{\boldsymbol{a}}_{\boldsymbol{2}},{\boldsymbol{b}}_{\boldsymbol{1}},{\boldsymbol{b}}_{\boldsymbol{2}}\right)$ and $\left(\boldsymbol{p},\boldsymbol{q},\boldsymbol{r},\boldsymbol{s}\right)$}

\noindent Recall that we have constructed the second quadruple, which is unique up to sign, as follows:
\[X^2+Y^2=\underbrace{{\left|2a_1-ib_2\right|}^2}_{=A}\underbrace{{\left|a_2+2ib_1\right|}^2}_{=B}.\] 
We further defined :
\begin{center}
$\left\{ \begin{array}{l}
T=A+2^{2h-1}B-2^hX \\ 
M=2^hX-A \\ 
L=2^{2h-1}B-2^hX \end{array}
\right.$
\end{center}
such that :
\begin{center}
$T^2=M^2+L^2+{\left(2^hY\right)}^2$
\end{center}
and we constructed a quadruple $\left(p,q,r,s\right)$ satisfying $\left(P_0\right)$. The constructions of both quadruples are based on factorizations in $\mathbb{Z}\left[i\right]$. Specifically, the following elements have been introduced:
\begin{center}
$z_A=2a_1-ib_2,z_B=a_2+2ib_1\in \mathbb{Z}\left[i\right],\ \ \ X+iY=z_Az_B,\ A=z_A\overline{z_A},\ B=z_B\overline{z_B},$
\end{center}
\noindent and:
\begin{center}
$z_P=p+iq,\ z_R=r+is\in \mathbb{Z}\left[i\right],\ \ L+2^hiY=2z_Pz_R,\frac{T+M}{2}=z_P\overline{z_P},\frac{T-M}{2}=z_R\overline{z_R}$
\end{center}

\noindent We notice that:
\begin{align*}
L+2^hiY &=2^{2h-1}B-2^h\left(X-iY\right) \\
&=2^{2h-1}z_B\overline{z_B}-2^h\overline{z_Az_B} \\
&=\left(2^{2h-1}z_B-2^h\overline{z_A}\right)\overline{z_B}
\end{align*}

\noindent However, the factorization $L+2^hiY=2\left(p+iq\right)\left(r+is\right)$ must satisfy ${\left|p+iq\right|}^2=\frac{T+M}{2}=4^{h-1}B$.
\\[10pt]
\noindent We observe that a solution $\left(z_P,z_R\right)$ to the above equations is obtained for any unit $w\in \left\{\pm 1,\pm i\right\}$ by setting
\begin{align*}
&z_P=2^{h-1}\overline{z_B}=2^{h-1}\left(a_2-2ib_1\right)w, \\
&z_R=\left(2^{h-1}z_B-\overline{z_A}\right)\overline{w}=\left[\left(-2a_1+2^{h-1}a_2\right)+i\left(2^hb_1-b_2\right)\right]\overline{w}.
\end{align*}
\noindent The additional parity and positivity conditions required by $\left(P_0\right)$ imply that $w=i$, which leads to the following result:

\begin{proposition}{:}
\label{p15}
Let $X^2+Y^2\in E^*_{Y^2}$ and $A,B>1$ such that $X^2+Y^2=AB$. Let $\left(a_1,a_2,b_1,b_2\right)$ the quadruple satisfying $\left(P'_0\right)$ with $b_1>0$. Then there exists a unique solution $\left(p,q,r,s\right)$ to $\left(P_0\right)$ given by:
\[\left\{ \begin{array}{l}
p=2^hb_1, \\ 
q=2^{h-1}a_2, \\ 
r=2^hb_1-b_2, \\ 
s=2a_1-2^{h-1}a_2. \end{array}
\right.\] 
\end{proposition}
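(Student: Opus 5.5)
We prove existence by direct verification and uniqueness by a factorization argument in $\mathbb{Z}\left[i\right]$, following the computation just before the statement.

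\emph{Existence.} We take $w=i$ in the family $z_P=2^{h-1}\overline{z_B}w$, $z_R=\left(2^{h-1}z_B-\overline{z_A}\right)\overline{w}$. This gives
\[z_P=2^{h-1}\left(2b_1+ia_2\right),\qquad z_R=\left(2^hb_1-b_2\right)+i\left(2a_1-2^{h-1}a_2\right),\]
which is exactly the quadruple of the statement. We then check $\left(P_0\right)$ line by line using $\left(P'_0\right)$.
\begin{enumerate}[label=$\bullet$]
\item $p^2+q^2=4^{h-1}\left(a_2^2+4b_1^2\right)=4^{h-1}B=\frac{T+M}{2}$.
\item $ps+qr=2^{h+1}a_1b_1-2^{h-1}a_2b_2=2^{h-1}Y$.
\item $pr-qs=4^hb_1^2+4^{h-1}a_2^2-2^h\left(a_1a_2+b_1b_2\right)=4^{h-1}B-2^{h-1}X=\frac{L}{2}$.
\end{enumerate}
For the value of $T$, we compute $r^2+s^2=\left|2^{h-1}z_B-\overline{z_A}\right|^2=4^{h-1}B+A-2^{h-1}\cdot 2\,\mathfrak{R}\left(z_Az_B\right)=4^{h-1}B+A-2^hX=\frac{T-M}{2}$. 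Adding this to the first bullet gives $T=p^2+q^2+r^2+s^2$ and $M=p^2+q^2-r^2-s^2$. Finally, $p=2^hb_1\ge 0$ because $b_1>0$. Since $a_2$ is odd, $\nu_2\left(q\right)=h-1<h\le\nu_2\left(p\right)$.

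\emph{Uniqueness: reduction to the form of $z_P$.} Let $\left(p,q,r,s\right)$ be any solution of $\left(P_0\right)$ and set $z_P=p+iq$, $z_R=r+is$. Then
\[z_Pz_R=\tfrac12\left(L+2^hiY\right)=\tfrac12\left(2^{2h-1}z_B-2^h\overline{z_A}\right)\overline{z_B},\qquad \left|z_P\right|^2=4^{h-1}B.\]
The key step is to show that $z_P=2^{h-1}\overline{z_B}w$ for some unit $w$. I expect this to be the main obstacle, and I would handle it as follows.

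\emph{Odd part of $z_P$.} Since $\gcd\left(X,Y\right)=1$, $z_B$ and $\overline{z_A}$ share no irreducible factor. Indeed, if $\pi|z_B$ and $\pi|\overline{z_A}$, then $\pi\overline{\pi}$ divides $z_Az_B=X+iY$, which is impossible. Hence, for every irreducible $\pi$ dividing $z_B$, $\pi$ does not divide $2^{2h-1}z_B-2^h\overline{z_A}$. Also, $\pi$ does not divide $\overline{z_B}$, because $X+iY$ has no pair of conjugate irreducible factors. Therefore the irreducible factors of odd norm of $L+2^hiY$ dividing $B$ are exactly those of $\overline{z_B}$, with the same multiplicities. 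Now $z_P$ divides $L+2^hiY$ (up to the factor $2$) and its odd part has norm $B$. Counting exponents prime by prime then forces the odd part of $z_P$ to equal $\overline{z_B}$ up to a unit.

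\emph{Power of $1+i$.} The norm condition gives $\nu_{1+i}\left(z_P\right)=2\left(h-1\right)$, and $\left(1+i\right)^{2}=2i$. Hence $z_P=2^{h-1}\overline{z_B}w$ for some unit $w$. Since $z_P\neq 0$, $z_R$ is then determined as $\left(2^{h-1}z_B-\overline{z_A}\right)\overline{w}$.

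\emph{Selecting $w$.} It remains to eliminate three of the four units.
\begin{enumerate}[label=$\bullet$]
\item $w=1$ gives $\left(p,q\right)=\left(2^{h-1}a_2,-2^hb_1\right)$, and $w=-1$ gives its negative. In both cases $\nu_2\left(p\right)=h-1<\nu_2\left(q\right)$, violating $\nu_2\left(p\right)\ge\nu_2\left(q\right)$.
\item $w=-i$ gives $p=-2^hb_1<0$, violating $p\ge 0$.
\end{enumerate}
Only $w=i$ survives, which yields uniqueness and the stated formulas.
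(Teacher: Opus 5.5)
Your existence check is the same direct computation as the paper's, and your uniqueness argument is sound in outline. However, one sentence in it is false and should be replaced.

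\textbf{The false claim.} You assert that the odd-norm irreducible factors of $L+2^hiY$ dividing $B$ are those of $\overline{z_B}$ \emph{with the same multiplicities}. This fails in general, because $2^{h-1}z_B-\overline{z_A}$ (that is, $z_R$ up to a unit) can share factors with $\overline{z_B}$. Take $X=6$, $Y=7$, $A=17$, $B=5$, $(a_1,a_2,b_1,b_2)=(2,1,1,1)$, $h=1$. Then $z_B=1+2i$ and $2^{h-1}z_B-\overline{z_A}=-3+i$, which is divisible by $1-2i$. Consequently $L+2^hiY=-2+14i=2(-1-i)(1-2i)^2$, whereas $\overline{z_B}=1-2i$ appears only once.

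\textbf{The fix.} You do not need that claim. What you actually proved is that no irreducible $\pi\mid z_B$ divides $L+2^hiY$, so the exponent of $\pi$ in $z_P$ is $0$. The norm condition $|z_P|^2=4^{h-1}B$ then forces the exponent of $\overline{\pi}$ in $z_P$ to be $\nu_{|\pi|^2}(B)$, which equals the exponent of $\overline{\pi}$ in $\overline{z_B}$. Every odd irreducible factor of $z_P$ lies over a prime divisor of $B$, so the odd part of $z_P$ is $\overline{z_B}$ up to a unit. With the count done through the norm rather than through divisibility of $L+2^hiY$, the rest of your argument is correct: the $(1+i)$-adic valuation step and the elimination of $w=1,-1,-i$ both go through.

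\textbf{Comparison with the paper.} For uniqueness you take a different route. The paper inverts the linear change of variables. From an arbitrary solution of $(P_0)$ it sets $a_2=2^{-(h-1)}q$, $b_1=2^{-h}p$, $a_1=\tfrac12(q+s)$, $b_2=p-r$, checks that these satisfy $(P'_0)$ with $b_1>0$, and appeals to the uniqueness part of Proposition 13.
\begin{itemize}
\item The paper's route is short, but the integrality of these quantities and $b_1\neq 0$ rest on the earlier $2$-adic analysis in Propositions 8 and 10: $\nu_2(q)=h-1<\nu_2(p)$, $q+s$ even, and $p\neq 0$ when $B\neq 1$.
\item Your route works directly on $L+2^hiY=2z_Pz_R$ and proves that \emph{every} solution has $z_P=2^{h-1}\overline{z_B}w$. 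The paper's discussion before the proposition only shows that this family yields solutions.
\item You then obtain integrality, $p\neq 0$ and the choice $w=i$ from the normalization $p\ge 0$, $\nu_2(p)\ge\nu_2(q)$ alone.
\end{itemize}
So your proof is self-contained and explains why $w=i$ is forced, at the cost of essentially redoing the Gaussian-integer uniqueness argument of Proposition 13 in this setting.
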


\paragraph{\textnormal{\underbar{\textit{Proof}}}}{:}
It is sufficient to verify that the given values satisfy $\left(P_0\right)$. This can be done by direct computation: 
\begin{align*}
&p^2+q^2+r^2+s^2 \\
&=2^{2h}b^2_1+2^{2\left(h-1\right)}a^2_2+2^{2h}b^2_1-2^{h+1}b_1b_2+b^2_2+4a^2_1-2^{h+1}a_1a_2+2^{2\left(h-1\right)}a^2_2 \\
&=2^{2h-1}\left(a^2_2+4b^2_1\right)+\left(4a^2_1+b^2_2\right)-2^h\left(2a_1a_2+2b_1b_2\right) \\
&=T,
\end{align*}
\begin{align*}
&p^2+q^2-r^2-s^2 \\
&=2^{2h}b^2_1+2^{2\left(h-1\right)}a^2_2-2^{2h}b^2_1+2^{h+1}b_1b_2-b^2_2-4a^2_1+2^{h+1}a_1a_2-2^{2\left(h-1\right)}a^2_2 \\
&=2^h\left(2a_1a_2+2b_1b_2\right)-\left(4a^2_1+b^2_2\right) \\
&=M,
\end{align*}
 
\[2\left(pr-qs\right)=2^{2h-1}\left(a^2_2+4b^2_1\right)-2^h\left(2a_1a_2+2b_1b_2\right)=L\] 
\[2\left(ps+qr\right)=2^h\left(4a_1b_1-a_2b_2\right)=2^hY.\] 
Moreover, by hypothesis $p=2^hb_1\ge 0$.
\\[10pt]
\noindent Finally, since $a_2$ is necessarily odd, we have $\nu_2\left(p\right)\ge h>\nu_2\left(q\right)=h-1$.
\\[10pt]
\noindent The uniqueness of $\left(p,q,r,s\right)$ comes from the fact that from any such quadruple, we can uniquely reconstruct the solution $\left(a_1,a_2,b_1,b_2\right)$ of $\left(P'_0\right)$ with $b_1>0$. More precisely, we set:
\begin{center}
$a_2=2^{-\left(h-1\right)}q\in 2\mathbb{N}\mathrm{+}1,\ b_1=2^{-h}p\in \mathbb{N}\mathrm{,}\ a_1=\frac{1}{2}\left(q+s\right),\ b_2=p-r.$
\end{center} 

\noindent We easily obtain $B=\frac{1}{4^{h-1}} \cdot \frac{T+M}{2}=a^2_2+4b_1$ whereas:
\begin{center}
$A=T-L=p^2+q^2+r^2+s^2-2\left(pr-qs\right)=4a^2_1+b^2_2.$
\end{center} 
\noindent By \Cref{p13}, this solution is unique. $\blacksquare $

\begin{corollary}{:}
\label{c7}
Let $n=X^2+Y^2\in E^*_{Y^2}$ be composite. Then $n\in E^1_{Y^2}$ if and only if there exist $A,B>1$ such as: 
\begin{center}
$\nu_2\left(b_1\right)\ge \nu_2\left(X\right)-1$
\end{center} 
where $b_1=b_1\left(X,Y,A,B\right)$.
\end{corollary}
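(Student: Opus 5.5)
The argument is a translation of the definition of $E^1_{Y^2}$ through \Cref{p15}.

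By definition, $n\in E^1_{Y^2}$ means there is a non-trivial factorization $n=AB$ with $A,B>1$ and a quadruple $(p,q,r,s)$ satisfying $(P_0)$ with $\nu_2(p)>2(h-1)$, where $h=\nu_2(X)$.

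\Cref{p15} says that for each factorization $n=AB$ with $A,B>1$, the solution of $(P_0)$ is unique and is given by $p=2^hb_1$ with $b_1=b_1(X,Y,A,B)>0$. Hence the existential quantifier on the quadruple collapses: for a fixed pair $(A,B)$, the condition ``some quadruple satisfies $(P_0)$ and $\nu_2(p)>2(h-1)$'' is equivalent to $\nu_2(2^hb_1)>2(h-1)$. This equivalence holds in both directions, because the quadruple of \Cref{p15} exists and is the only one.

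The valuation condition then simplifies:
\[
\nu_2(2^hb_1)>2(h-1)\iff h+\nu_2(b_1)>2h-2\iff \nu_2(b_1)>h-2\iff \nu_2(b_1)\ge h-1 .
\]
Since $h=\nu_2(X)$, this is exactly $\nu_2(b_1)\ge \nu_2(X)-1$. Here $b_1\neq 0$ by \Cref{p13}, so its valuation is finite.

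Quantifying over all factorizations gives the corollary: $n\in E^1_{Y^2}$ iff some $A,B>1$ with $AB=n$ satisfies $\nu_2(b_1(X,Y,A,B))\ge \nu_2(X)-1$.

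The only point needing care is the uniqueness in \Cref{p15}. The definition of $E^1_{Y^2}$ asks for the existence of some quadruple satisfying $(P_0)$, whereas the paper elsewhere notes that solutions of $(P_0)$ need not be unique. Uniqueness is what makes the ``only if'' direction work, since it rules out some other quadruple with larger $\nu_2(p)$. I would invoke \Cref{p15} in the non-trivial setting ($\gcd(X,Y)=1$, $A,B>1$), where uniqueness is established. The sign normalization $b_1>0$ does not affect $\nu_2$, so it matches the condition $p\ge 0$ in $(P_0)$.
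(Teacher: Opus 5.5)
Your proof is correct and follows the paper's argument: unfold the definition of $E^1_{Y^2}$, use \Cref{p15} to identify $p=2^hb_1$, and reduce $\nu_2(2^hb_1)>2(h-1)$ to $\nu_2(b_1)\ge h-1$. You also make explicit that the existence part of \Cref{p15} gives the ``if'' direction and its uniqueness part gives the ``only if'' direction, which the paper's one-line proof uses only implicitly.
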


\paragraph{\textnormal{\underbar{\textit{Proof}}}}{:}
Let $h=\nu_2\left(X\right)$. By the definition of $E^1_{Y^2}$, $n$ belongs to $E^1_{Y^2}$ if and only if there exist $A,B>1$ such that $n=AB$ and a solution $\left(p,q,r,s\right)$ to $\left(P_0\right)$ such that $\nu_2\left(p\right)>2\left(h-1\right)$. By \Cref{p15}, $p=2^hb_1$ so this condition reduces to $\nu_2\left(b_1\right)\ge h-1$. $\blacksquare $

\begin{remark}{:}
\label{r9}
This condition always holds if $\nu_2\left(X\right)=1$, regardless of the choice of $A$ and $B$. Consequently, the corresponding elements of $E^*_{Y^2}$ do not belong to $E^2_{Y^2}$.
\end{remark}

\noindent 
\subsection{Infinitude of sets defined by identities (V1) and (V2)}

\noindent We are now in a position to establish the following propositions.

\begin{proposition}{:}
\label{p16}
$\left|E^1_{Y^2}\right|=\left|E^2_{Y^2}\right|=+\infty $.
\end{proposition}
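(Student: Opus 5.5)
We will prove both infinitudes by producing explicit infinite families, using \Cref{c7} and \Cref{r9} to certify membership. Fix an odd $Y$. The key tool is the converse direction of \Cref{p13}: if $(a_1,a_2,b_1,b_2)\in(\mathbb{Z}^*)^4$ with $a_2,b_2$ odd and $4a_1b_1-a_2b_2=Y$, set $X=2(a_1a_2+b_1b_2)$, $A=4a_1^2+b_2^2$, $B=a_2^2+4b_1^2$. Then $X^2+Y^2=AB$ with $A,B>1$. If moreover $\gcd(X,Y)=1$ and $X\neq0$, then $n=X^2+Y^2$ is a composite element of $E^*_{Y^2}$. By the uniqueness in \Cref{p13}, $b_1(X,Y,A,B)=\pm b_1$ (fixed to $b_1>0$ by \Cref{c6}), so $\nu_2(b_1)$ is exactly the quantity tested in \Cref{c7}.

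For $E^1_{Y^2}$ we will use \Cref{r9}: every composite $n\in E^*_{Y^2}$ with $\nu_2(X)=1$ lies in $E^1_{Y^2}$. It therefore suffices to have infinitely many composite $X^2+Y^2$ with $X\equiv 2\ [4]$ and $\gcd(X,Y)=1$. Start from any admissible quadruple with $\nu_2(X)=1$; for instance, take $b_1=1$, $a_2=1$, $b_2=4a_1-Y$, with $a_1\neq0$ chosen so that $b_2\neq0$. Then $X=2(a_1+b_2)=2(5a_1-Y)$, which is $2\times$odd when $a_1$ is even. Coprimality with $Y$ amounts to $\gcd(5a_1,Y)=1$, which holds for $a_1$ running over a suitable residue class modulo $2y$, where $y$ is the radical of $Y$. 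This already gives infinitely many distinct $X$. Alternatively, we can apply a Type 1 sequence (\Cref{p14}) with $k$ varying so that $\frac{m}{4b_1}k\cdot a_2+\frac{m}{a_2}k\,b_1$ keeps $X_k/2$ odd.

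For $E^2_{Y^2}$ we need $\nu_2(b_1)<h-1$ with $h=\nu_2(X)\ge 3$, and we must check this for \emph{every} factorization $n=AB$. We first fix a structural choice: take $b_1$ odd and force $\nu_2(X)\ge3$. With $b_1=1$ and $a_2=1$ as above, $X=2(5a_1-Y)$, and we choose $a_1$ with $5a_1\equiv Y\ [8]$, so that $h\ge 4$, while keeping $\gcd(5a_1,Y)=1$ by the CRT. The main obstacle is the quantifier in \Cref{c7}: $n$ lies in $E^1_{Y^2}$ as soon as \emph{some} factorization has $\nu_2(b_1)\ge h-1$. Membership in $E^2_{Y^2}$ is per the definition of existence for some factorization, so a single factorization with $\nu_2(p)\le2(h-1)$, i.e. $\nu_2(b_1)\le h-2$, suffices. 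Our factorization gives $\nu_2(b_1)=0\le h-2$, and we conclude. If the intended meaning of $E^2$ required exclusivity, we would instead take $A,B$ prime, making the factorization unique up to swapping. We would then handle the swapped factorization $(B,A)$ by computing its quadruple from $z_A,z_B$ via conjugation and units, and check that its $b_1$ is, up to sign, the odd number $b_2$ or $a_1$. Producing infinitely many such prime pairs is the hard part, and we would avoid it by relying on the existential definition.

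Finally, distinctness: the $X$ values form infinite arithmetic progressions in $a_1$, so the $n$ are pairwise distinct, which gives $|E^1_{Y^2}|=|E^2_{Y^2}|=+\infty$.
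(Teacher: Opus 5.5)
\textbf{Gap: the explicit family fails when $5\mid Y$.} Taking $b_1=a_2=1$ forces $B=a_2^2+4b_1^2=5$ and $X=2(5a_1-Y)$. If $5\mid Y$ (for example $Y=5,15,25,\dots$), then $5\mid X$ for every $a_1$. Hence $\gcd(X,Y)\ge 5$, and no residue class of $a_1$ gives $\gcd(5a_1,Y)=1$, so your CRT step has nothing to choose from. Every $n=5A$ you produce is then a trivial element of $E_{Y^2}$, not an element of $E^*_{Y^2}$. This is consistent with \Cref{p1}: $\mathcal{D}_p(E^*_{Y^2})=\mathcal{P}_{1,4}\setminus\mathcal{D}(Y)$ excludes $5$ in this case. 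The proposition is claimed for every odd $Y$, and both your $E^1_{Y^2}$ and $E^2_{Y^2}$ arguments rest on this family, so the proof is incomplete for these $Y$. The Type 1 alternative you mention does not help by itself: with $k$ even it preserves $X/2 \bmod 2$, so it still needs a starting element of $E^*_{Y^2}$ with $\nu_2(X)=1$, which you only supply through the same family.

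\textbf{Repair within your approach.} Replace $b_1=1$ by $b_1=y$, the radical of $Y$. Take $a_2=1$ and $b_2=4a_1y-Y$, which is odd. Then
\begin{itemize}
\item $4a_1b_1-a_2b_2=Y$, $B=1+4y^2>1$, and $X=2\bigl(a_1(1+4y^2)-yY\bigr)$;
\item for every prime $\ell\mid Y$ we have $X/2\equiv a_1 \pmod{\ell}$, so $a_1\equiv 1\pmod y$ gives $\gcd(X,Y)=1$;
\item $X/2\equiv a_1+1\pmod 2$.
\end{itemize}
For $E^1_{Y^2}$, take $a_1$ even: then $h=1$ and \Cref{r9} applies. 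For $E^2_{Y^2}$, take $a_1$ odd: then $h\ge 2$ with $b_1$ odd, so by \Cref{p15} $\nu_2(p)=h\le 2(h-1)$. Thus $h\ge 2$ already suffices, and your mod-$8$ condition is unnecessary. Since $X$ is linear in $a_1$, you again get infinitely many distinct $n$.

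\textbf{Comparison with the paper.} With this change your argument is a valid and more explicit route, needing only the uniqueness in \Cref{p13} and the formula $p=2^hb_1$. The paper instead starts from an arbitrary composite element of $E^*_{Y^2}$ and runs a Type 2 progression. There $y\mid m$ preserves coprimality automatically, and the odd coefficients $\frac{m}{4a_1}$ and $\frac{b_2m}{4a_1}+\frac{a_1m}{b_2}$ control $\nu_2\bigl(b_1+\frac{m}{4a_1}k\bigr)$ and $\nu_2(X_k/2)$. That is why the paper never meets the $5\mid Y$ obstruction.

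Finally, your detour about an exclusive reading of $E^2_{Y^2}$ is not needed. The definition is existential, and \Cref{p17} shows $E^1_{Y^2}\cap E^2_{Y^2}$ is infinite.
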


\paragraph{\textnormal{\underbar{\textit{Proof}}}}{:}
The infinitude of $E^1_{Y^2}$ follows directly from \Cref{r9}; however, an alternative proof is provided here. Let $X^2+Y^2\in E^*_{Y^2}$ be an arbitrarily chosen composite number, $A,B>1$ such that $X^2+Y^2=AB$ and set $\left(a_1,a_2,b_1,b_2\right)=\mathcal{D}\left(X,Y,A,B\right)$. Consider the Type 2 progressions introduced in Section 3.1.2, defined by $m={\mathrm{lcm} \left(4a_1,b_2,y\right)\ }$ and, for any $k\in \mathbb{Z}$ such that $b_1+\frac{m}{4a_1}k\neq 0$:

\begin{center}
$X_k=2\left(a_1\left(a_2+\frac{m}{b_2}k\right)+\left(b_1+\frac{m}{4a_1}k\right)b_2\right),\ B_k={\left(a_2+\frac{m}{b_2}k\right)}^2+4{\left(b_1+\frac{m}{4a_1}k\right)}^2$ 
\end{center}

\noindent These progressions satisfy $\left(X_k,\ Y,A,B_k\right)\in \mathcal{F}$ and $\mathcal{D}\left(X_k,\ Y,A,B_k\right)=\left(a_1,a_2+\frac{m}{b_2}k,b_1+\frac{m}{4a_1}k,b_2\right)$.
\\[10pt]
\noindent For sufficiently large k, $n_k\coloneqq X^2_k+Y^2\in E^*_{Y^2}$ forms a strictly increasing progression of composite numbers. Let $h_k=\nu_2\left(X_k\right)$. By \Cref{c7}, $n_k\in E^1_{Y^2}$ provided that:

\begin{center}
$\nu_2\left(b_1+\frac{m}{4a_1}k\right)\ge h_k-1.$
\end{center}

\noindent Since $b_2$ and $Y$ are odd, $\frac{m}{4a_1}$ is also odd. From this,  it follows that we can make $\nu_2\left(b_1+\frac{m}{4a_1}k\right)$ arbitrarily large for infinitely many $k$. On the other hand, $a_2+\frac{m}{b_2}k$ is always odd. In particular, infinitely many terms in this progression satisfy $\nu_2\left(b_1+\frac{m}{4a_1}k\right)>\nu_2\left(a_1\right)=h_k-1$. For these terms, we have:
\begin{center}
$h_k-1=\nu_2\left(a_1\left(a_2+\frac{m}{b_2}k\right)+\left(b_1+\frac{m}{4a_1}k\right)b_2\right)=\nu_2\left(a_1\right)<\nu_2\left(b_1+\frac{m}{4a_1}k\right)$
\end{center}
which implies $n_k\in E^1_{Y^2}$. Therefore, $E^1_{Y^2}$ is infinite.
\\[10pt]
\noindent Similarly, since $\frac{b_2m}{4a_1}+\frac{a_1m}{b_2}$ is odd, we can choose $k$ such that the congruence of $a_1\left(a_2+\frac{m}{b_2}k\right)+\left(b_1+\frac{m}{4\left|a_1\right|}k\right)b_2$ modulo any power of $2$ can be be arbitrarily fixed. Thus, choosing $\alpha>\nu_2\left(a_1\right)$, there are infinitely many $k$ such that $2^\alpha|a_1\left(a_2+\frac{m}{b_2}k\right)+\left(b_1+\frac{m}{4a_1}k\right)b_2$. For any such $k$, we cannot have $2^\alpha|b_1+\frac{m}{4a_1}k$, otherwise it would imply $2^\alpha|a_1$. Hence:
\begin{center}
$\nu_2\left(b_1+\frac{m}{4a_1}k\right)<\alpha\le \nu_2\left(a_1\left(a_2+\frac{m}{b_2}k\right)+\left(b_1+\frac{m}{4a_1}k\right)b_2\right)=h_k-1$.
\end{center}
Consequently, $E^2_{Y^2}$ is also infinite.$\blacksquare $

\begin{proposition}{:}
\label{p17}
 $\left|E^1_{Y^2}\cap E^2_{Y^2}\right|=+\infty $.
\end{proposition}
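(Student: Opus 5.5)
The plan is to exhibit, for infinitely many $n$, two \emph{different} factorizations of the same $n=X^2+Y^2\in E^*_{Y^2}$. One factorization $n=AB$ should satisfy the criterion of \Cref{c7}, i.e. $\nu_2\left(b_1\left(X,Y,A,B\right)\right)\ge h-1$ with $h=\nu_2\left(X\right)$, which puts $n\in E^1_{Y^2}$. The other, $n=A'B'$, should have $\nu_2\left(b_1\left(X,Y,A',B'\right)\right)\le h-2$. By \Cref{p15} it then gives a quadruple $\left(p,q,r,s\right)$ with $p=2^hb_1$, hence $\nu_2\left(p\right)\le 2\left(h-1\right)$, which puts $n\in E^2_{Y^2}$. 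The definitions of $E^1_{Y^2}$ and $E^2_{Y^2}$ only require the \emph{existence} of a suitable factorization, so both memberships can hold simultaneously. This forces $h\ge 3$ (by \Cref{r9}, $h=1$ is excluded) and $n$ having at least three Gaussian factors.

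I would work directly in $\mathbb{Z}\left[i\right]$ as in the proof of \Cref{p13}. I write $X+iY=z_1z_2z_3$ and use two splittings. The first is $z_A=z_1$, $z_B=z_2z_3$. The second is $z_{A'}=z_1z_2$, $z_{B'}=z_3$. By \Cref{p13}, $b_1$ is half the imaginary part of $z_B$, once $z_B$ is normalized by a unit to the shape $a_2+2ib_1$ with $a_2$ odd. The normalization of $z_A$ to $2a_1-ib_2$ is then forced by the product.

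The concrete choice is as follows. Fix $h\ge 3$ and take $z_3=1+2i$, so that $b_1'=1$ and $\nu_2\left(b_1'\right)=0\le h-2$. Take $z_2=\left(1-2i\right)+2^{h+1}t\,i$, or a similar perturbation of $\overline{z_3}$, so that $w:=z_2z_3=5+2^{h+1}t\left(2+i\right)\cdot(\text{unit adjustment})$ has imaginary part divisible by $2^{h}$. This gives $\nu_2\left(b_1\right)\ge h-1$, with $\left|z_2\right|^2>1$ for $t\neq 0$. Then choose $z_1=x_1+iy_1$ with $x_1$ even and $y_1$ odd so that:
\begin{enumerate}[label=(\roman*)]
\item $\mathrm{Re}\left(z_1w\right)$ has $2$-adic valuation exactly $h$;
\item $\mathrm{Im}\left(z_1w\right)$ is odd;
\item $\gcd\left(X,Y\right)=1$.
\end{enumerate}
Condition (i) is a congruence condition on $\left(x_1,y_1\right)$ modulo $2^{h+1}$. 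It is solvable because $w$ is an odd Gaussian integer and hence invertible modulo powers of $2$ in $\mathbb{Z}\left[i\right]$. Coprimality would be secured as in the proof of \Cref{p1}, by shifting $z_1$ by multiples of a large power of $2$ to avoid common Gaussian factors, in particular avoiding $z_1$ sharing a factor with $\overline{w}$. Letting $h\to\infty$, or $t$ vary, produces infinitely many distinct $n$.

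The main obstacle is the bookkeeping of units. I need to check that after normalizing each splitting to the form $\left(2a_1-ib_2\right)\left(a_2+2ib_1\right)$ required by \Cref{p13}, with $b_1>0$, the valuations of $b_1$ and $b_1'$ are indeed those of $\tfrac12\mathrm{Im}\left(z_2z_3\right)$ and $\tfrac12\mathrm{Im}\left(z_3\right)$. Multiplying by $\pm i$ swaps real and imaginary parts, so this must be verified against the parity constraints: $z_B$ must have odd real part, and $z_A$ must have even real part. This is where the choice of $x_1$ even and $y_1$ odd enters. I would first settle this parity check carefully, and only then tune the explicit congruences modulo $2^{h+1}$.
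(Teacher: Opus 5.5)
\textbf{The gap: $Y$ is not held fixed.} The statement concerns a single fixed odd $Y$, and $E^1_{Y^2}\cap E^2_{Y^2}$ consists of numbers $X^2+Y^2$ with that same $Y$. Nothing in your choice of $z_1,z_2,z_3$ forces $\mathrm{Im}(z_1z_2z_3)=Y$.
\begin{itemize}
\item Conditions (i)--(iii) only ask that the imaginary part be odd.
\item Your coprimality step, shifting $z_1$ by multiples of a power of $2$, also moves the imaginary part.
\item So the infinitely many $n$ you get by varying $h$ or $t$ each lie in $E^1_{Y'^2}\cap E^2_{Y'^2}$ for their own $Y'$, which does not prove the proposition.
\end{itemize}
The specific choice $z_3=1+2i$ even fails outright for some $Y$. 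It gives $5\mid n$, so if $5\mid Y$ then $5\mid X$, and $\gcd(X,Y)\neq 1$ whatever $z_1$ is (compare \Cref{p1}).

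By contrast, the unit bookkeeping you single out as the main obstacle is harmless. If $X$ is even, $Y$ is odd, and $z_B$ has odd real part and even imaginary part, then $z_A=(X+iY)/z_B$ automatically has the shape $2a_1-ib_2$, and \Cref{p13} fixes the pair up to sign.

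Keeping $Y$ fixed is exactly what the paper's proof is built around. It starts from an element of $E^2_{Y^2}$ (available by \Cref{p16}) and runs a Type 3 progression, which leaves $4a_1b_1-a_2b_2=Y$ and the small-valuation $b_1$ unchanged. It then forces a factor $C=4x^2+1$ into $A_k$ by a congruence on $k$ and moves the Gaussian factor $1+2ix$ into $z_{B_k}$. This makes the new $b_1$ divisible by $2^h$, while $X_k/2\equiv a_1a_2+b_1b_2\ [2^h]$ keeps $\nu_2(X_k)=h$.

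\textbf{How to repair your construction.} Fix $Y$ and $h\ge 2$. You only need $h\ge 2$, not $h\ge 3$: by \Cref{p15} and \Cref{c7}, the second splitting only requires $0\le h-2$.
\begin{itemize}
\item Take $z_3=1+2iY$. Then $b_1'=\pm Y$ is odd and $|z_3|^2$ is prime to $Y$.
\item Take $z_2=\overline{z_3}+2^{h+1}Yi$. Then $w=z_2z_3$ is primitive, has odd real part and $\mathrm{Im}\,w=2^{h+1}Y$, and $|w|^2$ is prime to $Y$.
\item The condition $\mathrm{Im}(z_1w)=Y$ is the linear equation $x_1\,\mathrm{Im}\,w+y_1\,\mathrm{Re}\,w=Y$. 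It is solvable because $\gcd(\mathrm{Re}\,w,\mathrm{Im}\,w)=1$, with solution set $z_1=z_1^{0}+s\overline{w}$ for $s\in\mathbb{Z}$, along which $X=X^{0}+s|w|^2$.
\item Since $|w|^2$ is odd and prime to $Y$, the Chinese remainder theorem gives infinitely many $s$ with $X\equiv 2^h\ [2^{h+1}]$ and $\gcd(X,Y)=1$ (and $|z_1|^2>1$).
\item For each such $s$, the splittings $(z_1,w)$ and $(z_1z_2,z_3)$ put $n$ in $E^1_{Y^2}$ and in $E^2_{Y^2}$ respectively.
\end{itemize}
The requirement $\gcd(|w|^2,Y)=1$ is unavoidable, since any prime dividing both $n$ and $Y$ divides $X$. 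Done this way, your route is a direct construction that, unlike the paper's, needs no seed element of $E^2_{Y^2}$.
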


\paragraph{\textnormal{\underbar{\textit{Proof}}}}{:}
Let $X^2+Y^2\in E^2_{Y^2}$ and $A,B>1$, and let $\left(a_1,a_2,b_1,b_2\right)$ be a solution to $\left(P'_0\right)$ such that $\nu_2\left(b_1\right)<h-1=\nu_2\left(a_1a_2+b_1b_2\right)$. We have $A=4a^2_1+b^2_2$. Consider the Type 3 sequences $X_k,A_k,B_k$ from Section 3.1.2, which satisfy $\mathcal{D}\left(X_k,Y,A_k,B_k\right)=\left(a_1+\frac{m}{4b_1}k,a_2+\frac{m}{b_2}k,b_1,b_2\right)$ where $m={\mathrm{lcm} \left(4b_1,b_2,y\right)\ }$. Then:

\begin{center}
$A_k=4{\left(a_1+\frac{m}{4b_1}k\right)}^2+b^2_2$
\end{center}

\noindent Let $x$ be a multiple of $Y$ and $b_2$ such that $b_1+a_2x\equiv 0\ \left[2^h\right]$ and $C=4x^2+1\in E_1\diagdown \left\{1\right\}$. By construction $C$ is odd and coprime to $Y$ and $b_2$, and thus coprime to $\frac{m}{4b_1}$. Choosing $k$ such that $a_1+\frac{m}{4b_1}k\equiv b_2x\ \left[C\right]$ and $k\equiv 0\left[2^h\right]$, we obtain $A_k\equiv 4{\left(b_2x\right)}^2+b^2_2\equiv 0\left[C\right]$. For sufficiently large $k$, we will also have $A_k>C$ (ensuring that $C$ is a proper divisor of  $A_k$).
\\[10pt]
\noindent By shifting the factor $C=4x^2+1$ from $A_k$ to $B_k$ (i.e. replacing $A_k$ with $\frac{A_k}{C}$ and $B_k$ with $B_kC$), $z_{B_k}$ becomes $z_{B_k}\left(1+2ix\right)$ and thus:
\begin{center}
 $b_1\left(X_k,Y,\frac{A_k}{C},B_kC\right)=b_1\left(X_k,Y,A_k,B_k\right)+a_2x=b_1+a_2x\equiv 0\left[2^h\right]$, 
\end{center}
\noindent whereas:
\begin{center}
$\frac{1}{2}X_k=\left(a_1+\frac{m}{4b_1}k\right)\left(a_2+\frac{m}{b_2}k\right)+b_1b_2\equiv a_1a_2+b_1b_2\ \left[2^h\right]$\end{center} 
\noindent This implies $\nu_2\left(X_k\right)-1=h-1$.
\\[10pt]
\noindent By construction, we therefore have $X^2_k+Y^2\in E^1_{Y^2}\cap E^2_{Y^2}$ for infinitely many values of $k$. Since the progression $X_k$ is increasing for sufficiently large $k$, this implies:
\begin{center}
$\left|E^1_{Y^2}\cap E^2_{Y^2}\right|=+\infty .$
\end{center}
\noindent This completes the proof. $\blacksquare $

\begin{example}{:}
$n=25=4^2+3^2=5\times 5={\left(4\times 1^2+1^2\right)}^2$. We compute $\left(a_1,a_2,b_1,b_2\right)=\left(1,1,1,1\right)$. Here, $h=2$ and $\nu_2\left(b_1\right)=0<h-1$. There is no other non-trivial factorization, but we can replace $a_1=1$ by $1+3k$ and $a_2=1$ by $1+12k$ (with $m=12$ using the notation from the previous proof) without changing the value of $Y=3$ while preserving ${\mathrm{gcd} \left(X,Y\right)\ }=1$. This yields the values:
\begin{center}
$X_k=2\left(1+3k\right)\left(1+12k\right)+2$,\ \ \ \ $A_k=4{\left(1+3k\right)}^2+1$,\ \ \ \ $B_k={\left(1+12k\right)}^2+4$.
\end{center} 
\noindent Following the previous proof, we choose $x$ multiple of 3 such that $x\equiv -1\ \left[4\right]~$: $x=3$ satisfies this.
\\[10pt]
\noindent We then set $C={4.3}^2+1=37$, which is indeed coprime to $Y$ and $b_1$. The equation $a_1+\frac{m}{4b_1}k\equiv b_2x\ \left[C\right]$ becomes $1+3k\equiv 3\ \left[37\right]$ which reduces to $k\equiv 13\ [37]$. We also require $k\equiv 0\ \left[4\right]$, hence $k\equiv 124~\left[148\right]$. We therefore set $k=124+148\kappa$.
\\[10pt]
\noindent We verify (as expected) that:
\begin{align*}
A_k &=4{\left(1+3\left(124+148\kappa\right)\right)}^2+1 \\
&=5+24\left(124+148\kappa\right)+36\left({124}^2+248\times 148\kappa+{148}^2\kappa^2\right)
\end{align*}
\noindent is indeed a strict multiple of $37$.
\\[10pt]
\noindent This gives us two factorizations:
\begin{center}
$X^2_k+Y^2=A_kB_k=\left(\frac{A_k}{37}\right).37B_k$
\end{center}
\noindent Moreover, $\nu_2\left(X_k\right)=2$ and the quadruple corresponding to the first factorization is \\
$\left(1+3k,\ 1+12k,\ 1,\ 1\right)$, meaning $X^2_k+3^2\in E^2_9$.
\\[10pt]
\noindent However,the quadruple corresponding to the second factorization is:
\begin{center}
$\left(\frac{\left(1+3k\right)-3}{37},\left(1+12k\right)-12,1+3\left(1+12k\right),\frac{1+12\left(1+3k\right)}{37}\right)$
\end{center} 
\noindent and by construction $1+3\left(1+12k\right)\equiv 0\ \left[4\right]$ thus $X^2_k+Y^2\in E^1_9$.\textit{ }$\blacksquare $
\end{example}

\begin{remark}{:}
We observe in \Cref{r9} that $E^1_{Y^2}\diagdown E^2_{Y^2}$ is infinite. Determining whether $E^2_{Y^2}\diagdown E^1_{Y^2}$ is infinite appears more difficult, but it would be a consequence of the following conjecture:
\end{remark}

\begin{conjecture}{:}
\label{h1}
$E^*_{Y^2}$ contains infinitely many numbers of the form $\pi_1\pi_2$ where $\pi_1,\pi_2$ are primes such that $\pi_1=4a^2_1+b^2_2,\ \pi_2=a^2_2+4b^2_1$ with $a_1,a_2,b_1,b_2$ odd.
\end{conjecture}

\noindent Indeed, such composite elements of $E^*_{Y^2}$ only admit one decomposition up to permutation, verifying $\nu_2\left(b_1\right)=0$ and $\nu_2\left(X\right)-1=\nu_2\left(a_1a_2+b_1b_2\right)=1$. So they are in $E^2_{Y^2}\diagdown E^1_{Y^2}$.

\noindent 
\section{Number of decompositions into sums of two squares}

\noindent In section 2.3, we established the identity $F=\Psi GH$. In this section, we propose to count the number of ways to express a number as sums of two squares. To this end, we introduce:
\[\phi\left(n\right)=\left|\left\{\left(X,Y\right)\in {\mathbb{N}}^2|X\ge Y\ \mathrm{et\ }n=X^2+Y^2\right\}\right|\] 
For $n\notin F$, we have $\phi\left(n\right)=0$.

\noindent 
\subsection{Link with factorizations of $\boldsymbol{n}$}

\noindent Let $n\in \Psi$. We have shown that there is a correspondence between the expressions $n=X^2+Y^2$ and the factorizations $n=AB$ via the equations $X+iY=z_Az_B,\ A={\left|z_A\right|}^2,\ B={\left|z_B\right|}^2$, in the case where $X$ and $Y$ are coprime.
\\[10pt]
\noindent This correspondence can be reversed. To any factorization $n=AB$ we can associate $z_A,z_B$ constructed as the product of the irreducible divisors of $A$ in $Z^+_{\mathbb{P}}$ and $B$ in $Z^-_{\mathbb{P}}$, respectively. This ensures ${\left|z_A\right|}^2=A,{\left|z_B\right|}^2=B$, since they are both integers and have no divisors in $\left\{2\right\}\cup {\mathcal{P}}_{3,4}$.
\\[10pt]
\noindent We then form $X+iY=z_Az_B$. By construction $n=X^2+Y^2$.

\begin{proposition}{:}
\label{p18}
For all $n\in \Psi$, $\phi\left(n\right)=\left|\left\{\left(A,B\right)\in {{\mathbb{N}}^*}^2\mathrel{\left|\vphantom{\left(A,B\right)\in {{\mathbb{N}}^*}^2 A\ge B\ \mathrm{et}\ n=AB}\right.\kern-\nulldelimiterspace}A\ge B\ \mathrm{and}\ n=AB\right\}\right|$.
\end{proposition}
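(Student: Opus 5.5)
The plan is to count both sides through $\mathbb{Z}\left[i\right]$ using the unique factorization of \Cref{t1}. Since $n\in \Psi$, every prime divisor of $n$ lies in ${\mathcal{P}}_{1,4}$ (\Cref{p1}). Write $n=\prod_{j}{\pi_j}^{e_j}$ with $\pi_j=z_j\overline{z_j}$ and $z_j\in Z^+_{\mathbb{P}}$.

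\textbf{Step 1: count all Gaussian integers of norm $n$.} By \Cref{t1}, any $z\in \mathbb{Z}\left[i\right]$ with $z\overline{z}=n$ can be written uniquely as
\[z=w\prod_j z_j^{\alpha_j}\overline{z_j}^{\,e_j-\alpha_j},\qquad 0\le \alpha_j\le e_j,\quad w\in \left\{\pm 1,\pm i\right\}.\]
No irreducible from ${\mathcal{P}}_{3,4}$ or $1+i$ can occur, and comparing norms forces the two exponents attached to $\pi_j$ to sum to $e_j$. So there are exactly $4\prod_j\left(e_j+1\right)=4d\left(n\right)$ such $z$. This matches the correspondence described before the proposition: the choice of the $\alpha_j$ is exactly the choice of the divisor $A=\prod_j \pi_j^{\alpha_j}$, with $z_A$ built from $Z^+_{\mathbb{P}}$ and $z_B$ from $Z^-_{\mathbb{P}}$. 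Hence the map $z=X+iY\mapsto A$ is $4$-to-$1$ from the $4d\left(n\right)$ pairs $\left(X,Y\right)\in {\mathbb{Z}}^2$ with $X^2+Y^2=n$ onto $\mathcal{D}\left(n\right)$, the fibres being the unit orbits.

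\textbf{Step 2: count the pairs with $X\ge Y\ge 0$.} Let $s=1$ if $n$ is a perfect square and $s=0$ otherwise. Since $n$ is odd, $X=\pm Y$ never occurs. A point with $Y=0$ or $X=0$ exists iff $s=1$, and then there are exactly $4$ of them, $\left(\pm\sqrt{n},0\right)$ and $\left(0,\pm\sqrt{n}\right)$. The remaining $4d\left(n\right)-4s$ points have $XY\neq 0$ and $\left|X\right|\neq\left|Y\right|$. They fall into orbits of size $8$ under sign changes and swapping $X\leftrightarrow Y$, and each orbit contains exactly one pair with $X>Y>0$. Adding the single pair $\left(\sqrt{n},0\right)$ when $s=1$ gives
\[\phi\left(n\right)=\frac{4d\left(n\right)-4s}{8}+s=\frac{d\left(n\right)+s}{2}.\]

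\textbf{Step 3: count the factorizations.} The factorizations $n=AB$ with $A\ge B$ correspond to divisors $A\ge\sqrt{n}$. Pairing $A\leftrightarrow n/A$ gives $\frac{d\left(n\right)+s}{2}$ of them, since the only self-paired divisor is $A=\sqrt{n}$, occurring exactly when $s=1$. This equals $\phi\left(n\right)$, which proves the proposition.

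\textbf{Main obstacle.} The delicate point is Step 1: showing that exactly the $4d\left(n\right)$ listed Gaussian integers have norm $n$. This needs the uniqueness in \Cref{t1} together with the fact that $\pi_j$ splits as $z_j\overline{z_j}$ with $z_j$ and $\overline{z_j}$ non-associate, which holds because $\pi_j\neq 2$. Step 2 also needs care with the boundary cases $Y=0$ and $X=Y$. Note that the representations counted by $\phi$ need not be primitive (e.g.\ $25=5^2+0^2$), so the direct bijection with \Cref{p13} is not used. The counting route avoids that issue and recovers the equality of cardinalities.
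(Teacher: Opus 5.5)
Your proof is correct, but it takes a different route from the paper.

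The paper builds an explicit map $f$ from factorizations $(A,B)$ with $A\ge B$ to representations. It takes $z_A$ as the product of the irreducible factors of $A$ chosen in $Z^+_{\mathbb{P}}$, $z_B$ as the product of those of $B$ chosen in $Z^-_{\mathbb{P}}$, and sets $X+iY=z_Az_B$. Injectivity is checked by comparing the $Z^+_{\mathbb{P}}$- and $Z^-_{\mathbb{P}}$-parts of $z_Az_B$ up to units and conjugation. Surjectivity is checked by splitting a given $X+iY$ the same way and conjugating if necessary so that $|z_A|^2\ge|z_B|^2$.

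You instead compute both sides independently, and each comes out as $(d(n)+s)/2$:
\begin{itemize}
\item there are $4d(n)$ lattice points of norm $n$;
\item an orbit count under the eight symmetries $(\pm X,\pm Y),(\pm Y,\pm X)$ converts this into $\phi(n)$;
\item the pairing $A\leftrightarrow n/A$ counts the factorizations.
\end{itemize}

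The paper's bijection gives the kind of explicit factorization--representation correspondence it later exploits (cf.\ \Cref{p19} and the sieve tables), and it needs no case split on whether $n$ is a square. However, it leaves implicit how $z_Az_B$ is normalised into the region $X\ge Y\ge 0$. It also leaves implicit why units together with conjugation account for exactly those eight symmetries.

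Your counting makes that bookkeeping explicit. It isolates the boundary cases: $Y=0$ occurs exactly when $n$ is a square, and $X=Y$ is impossible since $n$ is odd. It also yields at once $r_2(n)=4d(n)$ and the closed formula $\phi(n)=\lceil d(n)/2\rceil$, which the paper only reaches afterwards at the end of Section 4.1.

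The price is that you obtain only an equality of cardinalities, not a canonical correspondence. That said, your fibre description in Step 1 ($z\mapsto A$, with fibres the unit orbits) is precisely the paper's map before the conjugation normalisation.
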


\paragraph{\textnormal{\underbar{\textit{Proof}}}}{:}
The construction described above defines a function
\begin{center}
$f:\left\{\left(A,B\right)\in {{\mathbb{N}}^*}^2\mathrel{\left|\vphantom{\left(A,B\right)\in {{\mathbb{N}}^*}^2 A\ge B\ \mathrm{et\ }AB=n}\right.\kern-\nulldelimiterspace}A\ge B\ \mathrm{et\ }AB=n\right\}\to \left\{\left(X,Y\right)\in \mathbb{N}\mathrel{\left|\vphantom{\left(X,Y\right)\in \mathbb{N} X\ge Y\ \mathrm{et\ }X^2+Y^2=n}\right.\kern-\nulldelimiterspace}X\ge Y\ \mathrm{et\ }X^2+Y^2=n\right\}.$
\end{center}

\noindent We want to show that $f$ is bijective.
\\[10pt]
\noindent We begin by proving injectivity. To do so, we observe that $f\left(A,B\right)=f\left(A',B'\right)$ implies (up to a unit multiplication) $z_Az_B=z_{A'}z_{B'}$ or $z_Az_B=\overline{z_{A'}}\overline{z_{B'}}$. By identifying the irreducible factors of the positive and negative imaginary part, $z_A=z_{A'}$ and $z_B=z_{B'}$ (or vice versa), from which it follows that $\left(A',B'\right)=\left(A,B\right)$ or $\left(B,A\right)$. However, the assumption $A\ge B$ forces $\left(A',B'\right)=\left(A,B\right)$.
\\[10pt]
\noindent We now show surjectivity. Let $X,Y$ be such that $n=X^2+Y^2$ and decompose $X+iY=z_Az_B$ (up to a unit) by setting $z_A$ the product of its irreducible factors of $Z^+_{\mathbb{P}}$ and $z_B$ the product of its irreducible factors in $Z^-_{\mathbb{P}}$. Up to conjugation of $X+iY$ to swap $z_A$ and $z_B$ and ensure that ${\left|z_A\right|}^2\ge \ {\left|z_B\right|}^2$, we obtain the preimage:
\[f\left({\left|z_A\right|}^2,{\left|z_B\right|}^2\right)=n.\] 
This completes the proof. $\blacksquare $
\\[10pt]
\noindent We now generalize this result to any integer in $F$:

\begin{theorem}{:}
\label{t3}
Let $n=\psi gh\in F$ with $\left(\psi,\ g,h\right)\in \Psi \times G \times H$. Then:
\begin{center}
$\phi\left(n\right)=\left|\left\{\left(A,B\right)\in {{\mathbb{N}}^*}^2\mathrel{\left|\vphantom{\left(A,B\right)\in {{\mathbb{N}}^*}^2 A\ge B\mathrm{\ et\ }\psi=AB}\right.\kern-\nulldelimiterspace}A\ge B\mathrm{\ et\ }\psi=AB\right\}\right|$.
\end{center}

\end{theorem}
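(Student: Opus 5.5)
We reduce to \Cref{p18} by showing that the factors $g$ and $h$ do not change the number of representations, i.e. $\phi\left(n\right)=\phi\left(\psi\right)$. We write $g=g_0^2$ with $g_0$ a product of primes of ${\mathcal{P}}_{3,4}$ and $h=2^\alpha$.

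\textbf{Removing the factor $g$.} Let $n=X^2+Y^2$. For any $p\in{\mathcal{P}}_{3,4}$ dividing $n$, $p$ is irreducible in $\mathbb{Z}\left[i\right]$ (Property on irreducibles) and divides $\left(X+iY\right)\left(X-iY\right)$. Hence $p\mid X+iY$, so $p\mid X$ and $p\mid Y$. Dividing by $p^2$ and iterating with \Cref{t1}, we get $\nu_p\left(X+iY\right)=\nu_p\left(n\right)/2$. Therefore $X=g_0X'$, $Y=g_0Y'$ with $X'^2+Y'^2=n/g$. Conversely, multiplying a representation of $n/g$ by $g_0$ gives one of $n$. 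This yields a bijection preserving the condition $X\ge Y\ge 0$, so $\phi\left(n\right)=\phi\left(\psi h\right)$.

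\textbf{Removing the factor $h$.} We use $\left(1+i\right)$. If $2\mid X^2+Y^2$, then $X\equiv Y\ \left[2\right]$ and
\[\left(\frac{X+Y}{2}\right)^2+\left(\frac{X-Y}{2}\right)^2=\frac{X^2+Y^2}{2}.\]
Conversely, $\left(x,y\right)\mapsto\left(x+y,x-y\right)$ maps representations of $m$ to representations of $2m$. On the normalized sets $\left\{X\ge Y\ge 0\right\}$ these maps are mutually inverse: from $X\ge Y\ge0$ we get $\frac{X+Y}{2}\ge\frac{X-Y}{2}\ge 0$, and from $x\ge y\ge 0$ we get $x+y\ge x-y\ge 0$. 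Hence $\phi\left(2m\right)=\phi\left(m\right)$, and by induction on $\alpha$, $\phi\left(\psi h\right)=\phi\left(\psi\right)$.

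\textbf{Conclusion.} Combining both steps, $\phi\left(n\right)=\phi\left(\psi\right)$, and \Cref{p18} applied to $\psi\in\Psi$ gives the stated count.

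The main point of care is the normalization $X\ge Y$ with $X,Y\in\mathbb{N}$ in the definition of $\phi$. We must check that the $2$-step bijection respects it, including the boundary cases $Y=0$ and $X=Y$: for instance, $2=1^2+1^2$ corresponds to $1=1^2+0^2$. We must also check that $\psi=1$ is handled consistently by \Cref{p18}.
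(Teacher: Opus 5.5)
Your proof is correct and takes the same route as the paper: you show that the factors $g$ and $h$ are forced, so that $\phi\left(n\right)=\phi\left(\psi\right)$, and then invoke \Cref{p18}. For $g$, the primes of ${\mathcal{P}}_{3,4}$ must divide both $X$ and $Y$; for $h$, your map $\left(X,Y\right)\mapsto\left(\frac{X+Y}{2},\frac{X-Y}{2}\right)$ is division of $X+iY$ by $1+i$ up to a unit and conjugation. Your version is simply more explicit than the paper's one-line Gaussian-integer argument, in particular in checking that these bijections respect the normalization $X\ge Y\ge 0$.
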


\paragraph{\textnormal{\underbar{\textit{Proof}}}}{:}
Any decomposition $n=X^2+Y^2$ is equivalent to the factorization $n=\left(X+iY\right)\left(X-iY\right)$. However, the irreducible factors of $X+iY$ in $\left\{1+i\right\}\cup {\mathcal{P}}_{3,4}$ are necessarily identical to those of $X-iY$, so there is no choice involved in their distribution. They correspond to $gh$ in the decomposition $n=\psi gh$. The problem then reduces to the case of the previous proposition. $\blacksquare $

\begin{remark}{:}
This identity can also be generalized in most cases to:
\[\left|\left\{\left(X,Y\right)\in {\mathbb{N}}^2\mathrel{\left|\vphantom{\left(X,Y\right)\in {\mathbb{N}}^2 n=X^2+Y^2}\right.\kern-\nulldelimiterspace}n=X^2+Y^2\right\}\right|=\left|\left\{\left(A,B\right)\in {{\mathbb{N}}^*}^2\mathrel{\left|\vphantom{\left(A,B\right)\in {{\mathbb{N}}^*}^2 \psi=AB}\right.\kern-\nulldelimiterspace}\psi=AB\right\}\right|.\] 
Indeed, it basically amounts to counting each pair $\left(X,Y\right)$ such that $X>Y$ and each pair $\left(A,B\right)$ such that $A>B$ twice. There is, however, one specific case to distinguish. If $\psi$ is a perfect square and $n$ is not twice a perfect square, then the divisors of $\psi$ contain an element of the form $\left(A,A\right)$ which is not duplicated, whereas its decompositions $n=X^2+Y^2$ always satisfy $X\neq Y$ because $n$ is not twice a perfect square. In this case, we have:
\[\left|\left\{\left(X,Y\right)\in {\mathbb{N}}^2\mathrel{\left|\vphantom{\left(X,Y\right)\in {\mathbb{N}}^2 n=X^2+Y^2}\right.\kern-\nulldelimiterspace} n=X^2+Y^2\right\}\right|=\left|\left\{\left(A,B\right)\in {{\mathbb{N}}^*}^2\mathrel{\left|\vphantom{\left(A,B\right)\in {{\mathbb{N}}^*}^2 \psi =AB}\right.\kern-\nulldelimiterspace}\psi =AB\right\}\right|+1.\]
\end{remark}

\begin{example}{:}
$25=0^2+5^2=5^2+0^2=3^2+4^2=4^2+3^2=1.25=25.1=5.5$
\end{example}

\begin{remark}{:}
\label{r12}
Given the prime factorization $\psi=\prod_{p\in {\mathcal{P}}_{1,4}}{p^{\nu_p\left(n\right)}}$ we can easily determine:

\begin{center}
$\left|\left\{\left(A,B\right)\in {{\mathbb{N}}^*}^2\mathrel{\left|\vphantom{\left(A,B\right)\in {{\mathbb{N}}^*}^2 \psi=AB}\right.\kern-\nulldelimiterspace}\psi=AB\right\}\right|=d\left(\psi\right)=\prod_{p\in {\mathcal{P}}_{1,4}}{\left(\nu_p\left(n\right)+1\right)}$
\end{center}
\noindent where $d\left(\psi\right)$ denotes the number of divisors of $\psi$ in ${\mathbb{N}}^*$.
\end{remark}

\noindent Another formula commonly encountered  \cite{b11} \cite{b12} is:
 
\begin{center}
$\left|\left\{\left(X,Y\right)\in {\mathbb{Z}}^2\mathrel{\left|\vphantom{\left(X,Y\right)\in {\mathbb{Z}}^2 n=X^2+Y^2}\right.\kern-\nulldelimiterspace}n=X^2+Y^2\right\}\right|=4\left(d_1\left(n\right)-d_3\left(n\right)\right)$
\end{center}
\noindent where $d_k\left(n\right)=\left|\mathcal{D}\left(n\right)\cap \left(4\mathbb{Z}\mathrm{+}k\right)\right|$. Let us prove this using our results.
\\[10pt]
\noindent When $\psi$ is not a perfect square or if $n$ is twice a perfect square, the expression $n=X^2+Y^2$ implies $X,Y\neq 0$ and we simply have: 

\begin{center}
$\left|\left\{\left(X,Y\right)\in {\mathbb{Z}}^2\mathrel{\left|\vphantom{\left(X,Y\right)\in {\mathbb{Z}}^2 n=X^2+Y^2}\right.\kern-\nulldelimiterspace}n=X^2+Y^2\right\}\right|=4\left|\left\{\left(X,Y\right)\in {\mathbb{N}}^2\mathrel{\left|\vphantom{\left(X,Y\right)\in {\mathbb{N}}^2 n=X^2+Y^2}\right.\kern-\nulldelimiterspace}n=X^2+Y^2\right\}\right|.$
\end{center}

\noindent Otherwise, we must isolate the cases where $X$ or $Y$ is zero. We then have: 

\begin{center}
$\left|\left\{\left(X,Y\right)\in {\mathbb{Z}}^2\mathrel{\left|\vphantom{\left(X,Y\right)\in {\mathbb{Z}}^2 n=X^2+Y^2}\right.\kern-\nulldelimiterspace}n=X^2+Y^2\right\}\right|=4\left(\left|\left\{\left(X,Y\right)\in {\mathbb{N}}^2\mathrel{\left|\vphantom{\left(X,Y\right)\in {\mathbb{N}}^2 n=X^2+Y^2}\right.\kern-\nulldelimiterspace}n=X^2+Y^2\right\}\right|-1\right).$
\end{center}

\noindent In both case, \Cref{r12} ensures that $\left|\left\{\left(X,Y\right)\in {\mathbb{Z}}^2\mathrel{\left|\vphantom{\left(X,Y\right)\in {\mathbb{Z}}^2 n=X^2+Y^2}\right.\kern-\nulldelimiterspace}n=X^2+Y^2\right\}\right|=4d\left(\psi\right)$. Additionally:
\[d_1\left(n\right)=d\left(\psi\right).d_1\left(g\right),\ \ d_3\left(n\right)=d\left(\psi\right).d_3\left(g\right).\] 

\noindent We will show that, for any $g\in G$, $d_1\left(g\right)=d_3\left(g\right)+1$ by complete induction on $g$. Clearly this holds for $g=1$. Let $g\ge 2$ and assume the property holds for any element $\gamma \in G$ such that $\gamma <g$ (the induction hypothesis). $g$ must have a prime divisor $p\in {\mathcal{P}}_{3,4}$, let $g'$ such that $g=g'p^{{\nu }_p\left(g\right)}$ we can compute:

\begin{align*}
d_1\left(g\right) &=d_1\left(g'\right)d_1\left(p^{\nu_p\left(g\right)}\right)+d_3\left(g'\right)d_3\left(p^{\nu_p\left(g\right)}\right),\ \ d_3\left(g\right) \\
&=d_1\left(g'\right)d_3\left(p^{\nu_p\left(g\right)}\right)+d_3\left(g'\right)d_1\left(p^{\nu_p\left(g\right)}\right).
\end{align*}
 
\noindent We easily show, since g is a perfect square, that  $d_1\left(p^{\nu_p\left(g\right)}\right)=\frac{\nu_p\left(g\right)}{2}+1$ and $d_3\left(p^{\nu_p\left(g\right)}\right)=\frac{\nu_p\left(g\right)}{2}$. Furthermore, $g^{'}$ is a perfect square smaller than $g$ so $d_1 (g^{'})=d_3 (g^{'})+1$. Thus:

\begin{align*}
d_1\left(g\right) &=d_1\left(g'\right)\left(\frac{\nu_p\left(g\right)}{2}+1\right)+\left(d_1\left(g'\right)-1\right)\frac{\nu_p\left(g\right)}{2} \\
&=d_1\left(g'\right)\frac{\nu_p\left(g\right)}{2}+\left(d_1\left(g'\right)-1\right)\left(\frac{\nu_p\left(g\right)}{2}+1\right)+1=d_3\left(g\right)+1.
\end{align*}

\noindent We can therefore conclude that:
\begin{center}
$\left|\left\{\left(X,Y\right)\in {\mathbb{Z}}^2\mathrel{\left|\vphantom{\left(X,Y\right)\in {\mathbb{Z}}^2 n=X^2+Y^2}\right.\kern-\nulldelimiterspace}n=X^2+Y^2\right\}\right|=4d\left(\psi\right)=4\left(d_1\left(n\right)-d_3\left(n\right)\right).$
\end{center}

\begin{corollary}{:}
We have :
\begin{center}
$\phi\left(n\right)=\left\lceil \frac{1}{2}\prod_{p\in {\mathcal{P}}_{1,4}}{\left(\nu_p\left(n\right)+1\right)}\right\rceil .$
\end{center}

\noindent In particular, if we set $S=\sum_{p\in {\mathcal{P}}_{1,4}}{\nu_p\left(n\right)}$, $\phi\left(n\right)$ is bounded as follows: 
\begin{center}
$\frac{S+1}{2}\le \phi\left(n\right)\le 2^{S-1}$
\end{center}
\end{corollary}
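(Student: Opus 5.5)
The plan is to combine \Cref{t3} with \Cref{r12}, working with $n\in F$ throughout (for $n\notin F$ both sides are trivially handled, or one regards the statement as restricted to $F$). Write $n=\psi gh$ with $(\psi,g,h)\in\Psi\times G\times H$ as in \Cref{p3}, and set $D:=d(\psi)=\prod_{p\in\mathcal{P}_{1,4}}(\nu_p(n)+1)$, using $\nu_p(\psi)=\nu_p(n)$ for $p\in\mathcal{P}_{1,4}$. By \Cref{t3}, $\phi(n)$ equals the number of ordered factorizations $\psi=AB$ with $A\ge B$. Divisors of $\psi$ come in pairs $\{A,\psi/A\}$, and a pair has equal entries exactly when $A=\sqrt{\psi}$. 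Hence the count is $D/2$ if $\psi$ is not a square ($D$ even) and $(D+1)/2$ if $\psi$ is a square ($D$ odd). In both cases it equals $\lceil D/2\rceil$, since $D$ is odd if and only if every $\nu_p(n)$ is even, that is, if and only if $\psi$ is a square.

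For the bounds, write $S=\sum_p\nu_p(n)$ and $e_p=\nu_p(n)$. The lower bound follows from $\prod(e_p+1)\ge 1+\sum e_p=S+1$, which one gets by expanding the product and discarding the nonnegative cross terms; then $\phi(n)\ge \lceil (S+1)/2\rceil\ge (S+1)/2$. For the upper bound, use $e+1\le 2^e$ for every integer $e\ge 0$ to get $D\le 2^S$, hence $\lceil D/2\rceil\le \lceil 2^{S-1}\rceil=2^{S-1}$ as soon as $S\ge 1$.

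The main obstacle is the degenerate case $S=0$, where $\psi=1$, $D=1$ and $\phi(n)=1$, while $2^{S-1}=1/2$. Here the upper bound fails as written. I would state that the bound is intended for $S\ge 1$, or equivalently replace it by $\lceil 2^{S-1}\rceil$, and I would note that the lower bound $1/2\le 1$ still holds in this case. I would also check that the ceiling identity agrees with the convention of \Cref{t3} when $n$ is twice a square. That is not a problem, because \Cref{t3} already counts the pairs with $X\ge Y$ directly, so no separate correction is needed there.
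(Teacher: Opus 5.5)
Your proposal is correct and follows the paper's route. You use \Cref{t3} together with $d(\psi)=\prod_{p\in\mathcal{P}_{1,4}}(\nu_p(n)+1)$ for the identity, and you make explicit the divisor-pairing argument behind the ceiling, which the paper leaves implicit. The lower bound comes from expanding the product and the upper bound from $1+k\le 2^k$, as in the paper.

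Your remark that the upper bound fails when $S=0$ is accurate. For instance, $n=1$ or $n=9$ gives $\phi(n)=1>2^{-1}$. This is a defect of the stated corollary, not of your argument.

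One correction: for $n\notin F$ the identity is not ``trivially handled'' but false, since $\phi(n)=0$ while the right-hand side is at least $1$. The lower bound fails there too. So restricting to $n\in F$ is necessary, not optional.
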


\paragraph{\textnormal{\underbar{\textit{Proof}}}}{:}
The identity follows from the above and from $d\left(\psi\right)=\prod_{p\in {\mathcal{P}}_{1,4}}{\left(\nu_p\left(n\right)+1\right)}$.
\\[10pt]
\noindent Furthermore, expanding the product shows that $\prod_{p\in {\mathcal{P}}_{1,4}}{\left(\nu_p\left(n\right)+1\right)}\ge 1+S$ since the right-hand side corresponds to a subset of terms in the expansion, and all missing terms are positive.
\\[10pt]
\noindent Finally, since $\left(1+k\right)\le 2^k$ for all $k\ge 0$, the upper bound is straightforward to verify. $\blacksquare $

\begin{remark}{:}
These two bounds are sharp: the lower bound is achieved for odd powers of a single prime number, and the upper bound is achieved for a product of pairwise distinct primes.
\end{remark}

\noindent 
\subsection{Algorithm for decomposing integers into sums of two squares using a sieve}

\noindent We now know that the number of representations of an integer as a sum of two squares is obtained by determining the prime factors congruent to 1 modulo 4 of $n$ along with their multiplicities. Such decompositions yield four sequences that allow for the discovery of factorizations of other integers that are sums of two squares.
\\[10pt]
\noindent In Section 4.1 of \cite{a20}, a sieve algorithm is given that determines the prime numbers in $E_c$ for $c\in {\mathbb{N}}^*$, and more generally the complete factorization of the elements of $E_c$. The results of this article therefore make it straightforward to extend this algorithm so that it also returns $\phi \left(n\right)$ when $c=Y^2$ is a perfect square.
\\[10pt]
\noindent Here, we present a more comprehensive version to determine, for all $n\in E^*_{Y^2}\cap \llbracket 1, N \rrbracket$, not only $\phi \left(n\right)$ but also all the corresponding factorizations and representations as a sum of two squares. The steps presented below complement the aforementioned algorithm. This extension is based on the following result:

\begin{proposition}{:}
\label{p19}
Let $n=X^2+Y^2\in E^*_{Y^2}$ be composite. The mapping: 
\begin{center}
$\Gamma_Y:\left(A,B\right)\mapsto \left(X_{A,B},Y_{A,B}\right)\coloneqq \left(2\left(a_1a_2-b_1b_2\right),4a_1b_1+a_2b_2\right)$
\end{center}
\noindent defines a bijection from the set of factorizations $n=AB$ to the representations of $n$ as sums of two squares.
\end{proposition}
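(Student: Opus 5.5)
The plan is to read $\Gamma_Y$ through the Gaussian factorization used in \Cref{p13}. For a factorization $n=AB$, let $(a_1,a_2,b_1,b_2)=\mathcal{D}(X,Y,A,B)$, so that $z_A=2a_1-ib_2$, $z_B=a_2+2ib_1$, $X+iY=z_Az_B$, $A=|z_A|^2$ and $B=|z_B|^2$. A direct expansion gives
\[z_A\overline{z_B}=(2a_1-ib_2)(a_2-2ib_1)=2(a_1a_2-b_1b_2)-i(4a_1b_1+a_2b_2)=\overline{X_{A,B}+iY_{A,B}}.\]
Hence $X_{A,B}^2+Y_{A,B}^2=|z_A|^2|z_B|^2=AB=n$, so $\Gamma_Y$ lands in the representations of $n$. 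Moreover $X_{A,B}$ is even and $Y_{A,B}$ is odd, because $a_2,b_2$ are odd. Since $\mathcal{D}$ is only normalized by $b_1>0$, I will state the bijection for representations taken up to the signs of $X,Y$ (and with $X$ even). This is the same convention as in \Cref{p18}.

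The key structural fact is the following. Because $\gcd(X,Y)=1$ and $n$ is odd, every prime factor of $n$ lies in $\mathcal{P}_{1,4}$ (\Cref{p1}), and $X+iY$ contains no pair of conjugate irreducibles (the argument in the proof of \Cref{p13}). Write $X+iY=w\prod_k \pi_k^{\nu_k}$ with $\pi_k$ pairwise non-associate and non-conjugate. A factorization $n=AB$ then corresponds exactly to a splitting $\nu_k=\alpha_k+\beta_k$ in which, for each $k$, one of $\alpha_k,\beta_k$ is $0$. With this, $z_A\overline{z_B}$ equals, up to a unit, $\prod_k \pi_k^{\alpha_k}\overline{\pi_k}^{\beta_k}$: it is obtained by conjugating exactly the primes sent to $B$.

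Injectivity: suppose $\Gamma_Y(A,B)$ and $\Gamma_Y(A',B')$ agree up to signs. Then $z_A\overline{z_B}$ and $z_{A'}\overline{z_{B'}}$ agree up to a unit and possibly a global conjugation. Comparing irreducible factors, the set of $\pi_k$ appearing unconjugated determines $A$ (as the product of $|\pi_k|^{2\nu_k}$ over those $k$). A global conjugation is excluded by the fixed normalizations, namely $Y_{A,B}$ odd and the sign conventions.

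Surjectivity: take any representation $n=X'^2+Y'^2$. Its coprimality is automatic, since a common prime $p$ of $X',Y'$ would give $p^2\mid n$ with $p\equiv 1\,[4]$, and I must handle this. Each prime $|\pi_k|^2$ divides $n$ with exponent $\nu_k$, so $X'+iY'$ contains $\pi_k^{\nu_k}$, or $\overline{\pi_k}^{\nu_k}$, or a mixture. I set $A'=\prod|\pi_k|^{2\nu_k}$ over the $k$ where $\pi_k\mid X'+iY'$. Then $X'+iY'=u\,z_{A'}\overline{z_{B'}}$ for a unit $u$, and so $(|X'|,|Y'|)=(|X_{A',B'}|,|Y_{A',B'}|)$ after possibly swapping coordinates to put the even one first.

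I expect the main obstacle to be exactly that mixture case in the surjectivity step, together with the bookkeeping of units, signs and the $X\leftrightarrow Y$ swap. If $X'+iY'$ contains both $\pi_k$ and $\overline{\pi_k}$, then $|\pi_k|^2$ divides both coordinates. I will therefore interpret the statement over the same class as the domain, namely representations with $X'$ even and $\gcd(X',Y')=1$, matching $\mathcal{F}$. I will then check the counts against \Cref{p18} and Theorem~\ref{t3} as a consistency test.
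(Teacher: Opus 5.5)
Your plan has two genuine gaps, both in how you set up the domain and the codomain.

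\textbf{The domain is misidentified.} Your structural claim is false. A factorization $n=AB$ corresponds to an arbitrary splitting $\nu_k=\alpha_k+\beta_k$ with $0\le\alpha_k\le\nu_k$, not only to splittings with $\alpha_k\beta_k=0$; those are exactly the factorizations with $\gcd(A,B)=1$. The proof of \Cref{p13} covers the general case. The paper's own example $n=25=4^2+3^2=5\cdot 5$ has quadruple $(1,1,1,1)$ and $\Gamma_Y(5,5)=(0,5)$.

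Since $X_{A,B}+iY_{A,B}=\overline{z_A}z_B=u\prod_k\overline{\pi_k}^{\alpha_k}\pi_k^{\beta_k}$, the ``mixture'' representations you single out as the main obstacle are precisely the images of the factorizations with $\gcd(A,B)>1$. They are not an obstruction at all.

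By restricting to representations with $\gcd(X',Y')=1$, you end up proving a different statement: coprime factorizations versus primitive representations. There are only $2^{\omega(n)}$ of these ($\omega(n)$ being the number of distinct prime factors), against $d(n)$ factorizations in total. This would also fail your planned consistency check against the count $\lceil d(n)/2\rceil$ of \Cref{t3}. Coprimality of $X',Y'$ is also not automatic, as $25=0^2+5^2$ shows.

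No restriction is needed. Every Gaussian integer of norm $n$ has the form $u\prod_k\pi_k^{c_k}\overline{\pi_k}^{\nu_k-c_k}$ with $0\le c_k\le\nu_k$. So $B=\prod_k|\pi_k|^{2c_k}$, $A=n/B$ is a preimage. Because $\pi_k$ and $\overline{\pi_k}$ are not associate, the exponent of $\pi_k$ in $\overline{z_A}z_B$ recovers $\beta_k$, which gives injectivity.

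This is exactly what the paper's argument does. It sets $z_B=\gcd(X+iY,\mathcal{X}+i\mathcal{Y})$ and $z_A=(X+iY)/z_B$. The cofactor $(\mathcal{X}+i\mathcal{Y})/z_B$ is coprime to $z_A$ and has the same norm, hence equals $\overline{z_A}$ up to a unit, mixtures included.

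\textbf{The normalization breaks injectivity.} You keep ordered pairs $(A,B)$ but identify representations up to the signs of both coordinates. Swapping the factors gives $X_{B,A}+iY_{B,A}=\pm\overline{X_{A,B}+iY_{A,B}}$. Conjugation preserves the parities of the coordinates, so ``$Y_{A,B}$ odd'' excludes nothing.

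Concretely, take $n=65$ and $(X,Y)=(8,1)$. The quadruples are $(1,3,1,1)$ for $(5,13)$ and $(1,1,1,3)$ for $(13,5)$. These give $\Gamma_Y(5,13)=(4,7)$ and $\Gamma_Y(13,5)=(-4,7)$, which coincide up to signs.

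The correct quotient is by the unit group $\{\pm1,\pm i\}$ only: $(x,y)$ is identified with $(-y,x)$, $(-x,-y)$ and $(y,-x)$. This is the paper's ``up to a unit''. If you want to identify $(x,y)$ with $(|x|,|y|)$ as in \Cref{p18}, you must at the same time pass to unordered factorizations $A\ge B$.
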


\noindent \textit{\underbar{Proof}}: Writing $z_A=2a_1-ib_2,\ z_B=a_2+2ib_1$ as in Section 3, so that $A={\left|z_A\right|}^2,\ B={\left|z_B\right|}^2$ and $X+iY=z_Az_B$, it is equivalent to set $X_{A,B}+iY_{A,B}=\overline{z_A}z_B$. Indeed, on the one hand, for any decomposition $n={\mathcal{X}}^2+{\mathcal{Y}}^2$, let $z_B$ be the greatest common divisor of $X+iY$ and $\mathcal{X}+i\mathcal{Y}$, and let $z_A=\frac{X+iY}{z_B}$. By construction since $z_A$ and $\frac{\mathcal{X}+i\mathcal{Y}}{z_B}$ are coprime and have the same modulus, they are complex conjugates. That is to say, $\mathcal{X}+i\mathcal{Y}=\overline{z_A}z_B$. This is equivalent to saying that $\Gamma\left(A\coloneqq {\left|z_A\right|}^2,{B\coloneqq \left|z_B\right|}^2\right)=\left(\mathcal{X},\mathcal{Y}\right)$, which proves $\Gamma_Y$ is surjective.
\\[10pt]
\noindent Now, if there were another preimage $\left(A',B'\right)$ of $\left(\mathcal{X},\mathcal{Y}\right)$, we would have:
\[X+iY=z_Az_B=z_{A'}z_{B'},\ \ \ \ \mathcal{X}+i\mathcal{Y}=\overline{z_A}z_B=\overline{z_{A'}}z_{B'}\] 
Up to dividing these equations by their respective greatest common divisor, we can assume $z_A$ and $z_{A'}$ (respectively $z_B$ and $z'_B$) are coprime, which implies $z_{B'}=\pm iz_A$ and $z_{A'}=\mp iz_B$, and thus $n={\mathcal{Y}}^2$. However, since $z_Az_B$ does not contain conjugated irreducible factors, the only possibility is $n=1$, which proves that $\left(A,B\right)=\left(A',B'\right)$. Thus $\Gamma_Y$ is also injective. $\blacksquare $
\\[10pt]
\noindent The additional steps of the algorithm are:

\begin{enumerate}[label=-]
\item  Initialize a correspondence table between $\left(A,B\right)$ and $\left(X_{A,B},Y_{A,B}\right)$ for any value of $X$ coprime to $Y$.

\item  For any value of $X$ coprime to $Y$, deduce all factorizations $n=AB$ from the decomposition of $n=X^2+Y^2$ (which is what the original sieve already does).

\item  For the pairs $\left(A,B\right)$ obtained above and \underbar{that are not already in the correspondence table}, construct the quadruple $\left(a_1,a_2,b_1,b_2\right)$ and from it deduce the mapping $\left(X_{A,B},Y_{A,B}\right)$  as well as the mappings of the pairs  $\left(A_k,B_k\right)$ for the four types of sequences of the form $\left(X^2_k+Y^2\right)$ defined in Section 3.
\end{enumerate}

\noindent An illustration of a step of this extended algorithm is provided in Appendix A.

\noindent 
\section{Conclusion }

\noindent We have described the arithmetic structure of the non-trivial integers of the form $n\mathrm{=}X^{\mathrm{2}}\mathrm{+}Y^{\mathrm{2}}$ (for a fixed odd integer $Y$), and shown how their non-trivial factorizations $n\mathrm{=}AB$ can be interpreted and classified via Diophantus’ identity and the Gaussian integers. This explicitly links them to a unique quadruple $\left(a_{\mathrm{1}},a_{\mathrm{2}},b_{\mathrm{1}},b_{\mathrm{2}}\right)$, which also gives rise to a second representation of $n$ as a sum of two squares. Families of affine or quadratic progressions constructed from this quadruple generate infinite subsets of integers $n$ that preserve a similar arithmetic structure.
\\[10pt]
\noindent On a practical level, these identities and the associated progressions can be incorporated into a sieve algorithm to determine recursively all representations of the elements of $E^{\mathrm{*}}_{Y^{\mathrm{2}}}$ as sums of two squares. In this manner, all the theoretical results of the article can be implemented in an elementary and practical way.
\\[10pt]
\noindent \underbar{Acknowledgements} : We thank Fran\c{c}ois-Xavier VILLEMIN for his attentive comments and valuable suggestions.

\noindent 
\addcontentsline{toc}{section}{REFERENCES}

\bibliography{biblioarticle9.bib}

\noindent 

\noindent 
\section*{APPENDIX A: One Step of the Sieve}
\addcontentsline{toc}{section}{APPENDIX A: One Step of the Sieve}

\noindent Below, we present the steps for calculating the sieve extension introduced in Section 4.2, illustrated for a single iteration.
\\[10pt]
\noindent We therefore fix $n=65$,  $\left(X,Y\right)=\left(8,1\right)$ and the decomposition $n=5\cdot 13~$:

\begin{enumerate}
\item  We factor $X+iY=8+i=\left(2-i\right)\left(3+2i\right)$ from which we deduce the quadruple \\ $\left(a_1,a_2,b_1,b_2\right)=\left(1,3,1,1\right)$.

\item  The mapping $\Gamma_Y$ yields $\left(X_{5,13},Y_{5,13}\right)=\left(4,7\right)$ so $65=16+49$.

\item  We construct the first terms of the four sequences (for brevity, we construct only the terms for $k=\pm 1$). The product of the prime numbers dividing $Y=1$ is $y=1$. We can then consider the following cases:

\begin{enumerate}[label=$\bullet$]
\item  Type 1 progression: \\
$m={\mathrm{lcm} \left(a_2,4b_1,y\right)\ }=12$, $X_k=2\left(\left(a_1+\frac{m}{4b_1}k\right)a_2+b_1\left(b_2+\frac{m}{a_2}k\right)\right)=26k+8$.
For any $k$, we can add the correspondence $\left(A_k=4{\left(1+3k\right)}^2+{\left(1+4k\right)}^2,B=13\right)$ which is given by: 
\begin{center}
$\Gamma_{\mathrm{1}}\left(A_k,B\right)=\left(\left|4+10k\right|,\left|7+24k\right|\right).$
\end{center}
For $k=1$, we introduce the correspondence $\Gamma_1\left(89,13\right)=\left(14,31\right)$. For $k=-1$ we introduce $\Gamma_1\left(25,13\right)=\left(6,17\right)$.

\item  Type 2 progression: \\
$m={\mathrm{lcm} \left(4a_1,b_2,y\right)\ }=4$, $X_k=8+10k$. For any $k$, we add the correspondence $\left(A=5,B_k={\left(3+4k\right)}^2+4{\left(1+k\right)}^2\right)$ which is given by: 
\begin{center}
$\Gamma_1\left(A,B_k\right)=\left(\left|4+6k\right|,\left|7+8k\right|\right).$ 
\end{center}
For $k=\pm 1$ we introduce the correspondences $\Gamma_1\left(5,65\right)=\left(10,15\right)$ and $\Gamma_1\left(5,1\right)=\left(2,1\right)$ (the latter having already been identified by the sieve and corresponding to a prime number).

\item  Type 3 progression: \\
$m={\mathrm{lcm} \left(4b_1,b_2,y\right)\ }=4$, $X_k=8+14k+8k^2$. For any $k$, we add the correspondence $\left(A_k=4{\left(1+k\right)}^2+1\mathrm{,\ }B_k={\left(3+4k\right)}^2+4\right)$ which is given by:
\begin{center}
$\Gamma_1\left(A_k,B_k\right)=\left(\left|4+14k+8k^2\right|,\left|7+8k\right|\right).$ 
\end{center}
For $k=\pm 1$ we introduce the correspondences $\Gamma_1\left(17,53\right)=\left(26,15\right)$ and $\Gamma_1\left(1,5\right)=\left(2,1\right)$ (the second latter having already been identified by the sieve and corresponding to a prime number).

\item  Type 4 progression: \\
$m={\mathrm{lcm} \left(4a_1,a_2,y\right)\ }=12$, $X_k=8+14k+24k^2$. For any $k$, we add the correspondence $\left(A_k=4+{\left(1+4k\right)}^2,B_k=9+4{\left(1+3k\right)}^2\right)$ which is given by:
\begin{center}
$\Gamma_1\left(A_k,B_k\right)=\left(\left|4-14k-24k^2\right|,\left|7+24k\right|\right).$ 
\end{center}
For $k=\pm 1$ we introduce the correspondences $\Gamma_1\left(29,73\right)=\left(34,31\right)$ and $\Gamma_1\left(13,25\right)=\left(6,17\right)$ (the latter having already been found via the Type 1 progression).
\end{enumerate}
\end{enumerate}

\noindent The correspondences discovered by the sieve using only the indices $k=\pm 1$ are summarized below:

\begin{center}
\begin{tabular}{|p{1in}|p{1in}|p{1in}|p{1in}|p{1in}|}
 \hline 
$n$ & $A$ & $B$ & $X_{A,B}$ & $Y_{A,B}$ \\ \hline 
65 & 5 & 13 & $4$ & $7$ \\ \hline 
1157 & 89 & 13 & 14 & 31 \\ \hline 
325 & 25 & 13 & 6 & 17 \\ \hline 
325 & 5 & 65 & 10 & 15 \\ \hline 
901 & 17 & 53 & 26 & 15 \\ \hline 
2117 & 29 & 73 & 34 & 31 \\ \hline 
\end{tabular}
\end{center}

\noindent The "yield" of the Type 2 progression (which fixes the smaller of the two factors and varies X linearly) appears to be particularly high: all terms for $-4\le k\le 3$ yield a value of $n$ lower than that produced by Type 4 progression for $k=1$.

\noindent 

\end{document}